\pretocmd{\chapter}{\addtocontents{toc}{\protect\addvspace{15\p@}}}{}{}
\pretocmd{\section}{\addtocontents{toc}{\protect\addvspace{3\p@}}}{}{}
\def\@tocline#1#2#3#4#5#6#7{\relax
  \ifnum #1>\c@tocdepth % then omit
  \else
    \par \addpenalty\@secpenalty\addvspace{#2}%
    \begingroup \hyphenpenalty\@M
    \@ifempty{#4}{%
      \@tempdima\csname r@tocindent\number#1\endcsname\relax
    }{%
      \@tempdima#4\relax
    }%
    \parindent\z@ \leftskip#3\relax \advance\leftskip\@tempdima\relax
    \rightskip\@pnumwidth plus4em \parfillskip-\@pnumwidth
    #5\leavevmode\hskip-\@tempdima
      \ifcase #1
       \or\or \hskip .5em \or \hskip 1em \else \hskip 1.5em \fi%
      #6\nobreak\relax
    \dotfill\hbox to\@pnumwidth{\@tocpagenum{#7}}\par
    \nobreak
    \endgroup
  \fi}
\DeclareSymbolFont{bbold}{U}{bbold}{m}{n}
\DeclareSymbolFontAlphabet{\mathbbold}{bbold}
\newcommand{\UOne}{\mathbbold{1}}
\newcommand{\C}{\mathbb{C}}
\newcommand{\N}{\mathbb{N}}
\newcommand{\Z}{\mathbb{Z}}
\newcommand{\Q}{\mathbb{Q}}
\newcommand{\F}{\mathbb{F}}
\newcommand{\A}{\mathbb{A}}
\newcommand{\cH}{\mathcal{H}}
\newcommand{\Gal}{\operatorname{Gal}}
\newcommand{\ad}{\operatorname{ad}}
\newcommand{\ab}{\operatorname{ab}}
\renewcommand{\sc}{\operatorname{sc}}
\newcommand{\der}{\operatorname{der}}
\newcommand{\rank}{\operatorname{rank}}
\newcommand{\SO}{\mathrm{SO}}
\newcommand{\GL}{\mathrm{GL}}
\newcommand{\PGL}{\mathrm{PGL}}
\newcommand{\SL}{\mathrm{SL}}
\newcommand{\GO}{\mathrm{GO}}
\def\ab{\text{ab}}
\def\wab{\text{wab}}
\def\bG{\mathbf{G}}
\def\bH{\mathbf{H}}
\def\bS{\mathbf{S}}
\def\bT{\mathbf{T}}
\def\bY{\mathbf{Y}}
\def\bpx{\begin{pmatrix}}
\def\epx{\end{pmatrix}}
\DeclareMathOperator{\Frob}{Frob}
\DeclareMathOperator{\Syl}{Syl}
\newcommand{\regss}{\mathrm{reg.ss.}}
\newcommand{\ord}{\mathrm{ord}}
\newtheorem{thm}{Theorem}[section]
\newtheorem{theorem}[thm]{Theorem}
\newtheorem{cor}[thm]{Corollary}
\newtheorem{prop}[thm]{Proposition}
\newtheorem{lemma}[thm]{Lemma}
\newtheorem{conj}[thm]{Conjecture}
\newtheorem{remark}[thm]{Remark}
\newtheorem{lem}[thm]{Lemma}
\newtheorem{defi}[thm]{Definition}
\begin{document}

\title[]{Weak abelian direct summands and irreducibility of Galois representations}

\author{Gebhard B\"ockle}
\email{gebhard.boeckle@iwr.uni-heidelberg.de}
\address{
University of Heidelberg\\
IWR, Im Neuenheimer Feld 368,
69120 Heidelberg, Germany
}

\author{Chun-Yin Hui}
\email{chhui@maths.hku.hk}
\email{pslnfq@gmail.com}
\address{
Department of Mathematics\\
The University of Hong Kong\\
Pokfulam, Hong Kong
}

\thanks{Mathematics Subject Classification (2010): 11F80, 11F70, 11F22, 20G05.}

\begin{abstract}
%\textcolor[rgb]{0,0,1}{
Let $\rho_\ell$ be a semisimple $\ell$-adic representation of a number field $K$
that is unramified almost everywhere. We introduce a new notion called 
weak abelian direct summands of $\rho_\ell$ 
and completely characterize them, for example, if the algebraic monodromy of $\rho_\ell$ is connected.
If $\rho_\ell$ is in addition $E$-rational for some number field $E$, we prove 
that the weak abelian direct summands are locally algebraic (and thus de Rham).
We also show that the weak abelian parts of a connected semisimple Serre compatible system form again such a system.

Using our results on weak abelian direct summands, when $K$ is totally real and $\rho_\ell$ is the three-dimensional 
$\ell$-adic representation 
attached to a regular algebraic cuspidal automorphic, not necessarily polarizable representation $\pi$
of $\GL_3(\A_K)$ together with an isomorphism $\C\simeq \overline\Q_\ell$, 
we prove that $\rho_\ell$ is irreducible.
 We deduce in this case also some $\ell$-adic Hodge theoretic properties of $\rho_\ell$ if $\ell$ belongs 
to a Dirichlet density one set of primes.
\end{abstract}

\maketitle
%\tableofcontents

\section{Introduction}
\subsection{Weak abelian direct summands}
Let $K$ be a number field,  
$\overline K$ an algebraic closure of $K$, and
$\Gal_K:=\Gal(\overline K/K)$ the absolute Galois group of $K$.
Denote by $\Sigma_K$ the set of finite places of $K$ and by $S_\ell$ the subset 
of places that divide $\ell$.
An $\ell$-adic representation of $K$
$$\rho_\ell:\Gal_K\to\GL_n(\overline\Q_\ell)$$
 is said to be \emph{$E$-rational} for some number field $E\subset\overline\Q_\ell$
if  for almost all $v\in\Sigma_K$, the Galois representation $\rho_\ell$ is unramified at $v$
and the characteristic polynomial of $\rho_\ell(\mathrm{Frob}_v)$ has coefficients in $E$.
Such $E$-rational $\ell$-adic representations provide Galois theoretic incarnations of 
many objects in number theory and arithmetic geometry.
For instance, Faltings isogeny theorem \cite{Fa83} 
asserts that two abelian varieties defined over $K$ are isogenous 
if their attached ($\Q$-rational) $\ell$-adic representations of $K$ are equivalent.
 
When $\rho_\ell$ is semisimple and abelian, $E$-rationality (for some $E$)
and \emph{local algebraicity} (see $\mathsection$\ref{s23} for definition) of $\rho_\ell$ are equivalent, and 
the latter condition is equivalent to the local abelian Galois representations 
$\rho_\ell|_{\Gal_{K_v}}$ being Hodge-Tate (or de Rham) for all $v\in S_\ell$. 
These facts are obtained by Serre's abelian $\ell$-adic representations theory \cite{Se98} together with 
a remarkable theorem of Waldschmidt on transcendental numbers \cite{Wa81}.
 For a general semisimple $E$-rational $\rho_\ell$
it is not known that the local representation $\rho_\ell|_{\Gal_{K_v}}$ is Hodge-Tate for $v\in S_\ell$.

Let $\psi_\ell$ be an abelian semisimple $\ell$-adic representation of $K$.
We say that $\psi_\ell$ is a \emph{weak abelian direct summand} of $\rho_\ell$ if 
there is a subset $\mathcal L\subset\Sigma_K$ of Dirichlet density one such that 
for all $v\in\mathcal L$,
$\psi_\ell$ and $\rho_\ell$ are unramified at $v$ and
the characteristic polynomial of $\psi_\ell( \mathrm{Frob}_v)$
divides those of $\rho_\ell(\mathrm{Frob}_v)$.
Abelian direct summands (i.e., subrepresentations) 
of $\rho_\ell$ are obvious examples of weak abelian direct summands;
if $n$ is odd and $\rho_\ell$ factors through $\GO_n=\SO_n\times\mathbb{G}_m$, then
the homothety character of $\rho_\ell$ is also a weak abelian direct summand.
The goals of this article are to investigate this new notion (more flexible than abelian subrepresentations) and give some applications
to compatible system and automorphic Galois representations.

Under mild conditions on the algebraic monodromy group $\bG_\ell$ of $\rho_\ell$ 
(e.g., when $\bG_\ell$ is connected), we completely characterize the
weak abelian direct summands of an $E$-rational semisimple $\rho_\ell$ (Proposition \ref{lem:WeakDiv2}).
This, together with Serre-Waldschmidt theory, lead to the main theorem below, which generalizes the implication
\begin{center}
$E$-rationality ~ $\Rightarrow$~ local algebraicity
\end{center}
for abelian semisimple $\rho_\ell$ to the general (i.e., non-abelian) situation.

\begin{thm}\label{abpart}
Let $K$ and $E\subset\overline\Q_\ell$ be number fields, and $\rho_\ell:\Gal_K\to\GL_n(\overline\Q_\ell)$
a semisimple $E$-rational $\ell$-adic representation of $K$. 
If $\psi_\ell$ is a weak abelian direct summand of $\rho_\ell$, then it is locally algebraic, 
and thus its local representations at places above $\ell$ are de Rham. 
\end{thm}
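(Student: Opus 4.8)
The plan is to reduce Theorem~\ref{abpart} to the abelian case recalled in the introduction. Since a weak abelian direct summand $\psi_\ell$ is by definition abelian and semisimple, the Serre--Waldschmidt theory quoted there shows that it is locally algebraic (and hence, by the same discussion, de Rham at the places above $\ell$) as soon as it is $E'$-rational for \emph{some} number field $E'\subset\overline\Q_\ell$. So the whole content is to produce such an $E'$ with: for almost all $v$, the characteristic polynomial of $\psi_\ell(\Frob_v)$ has coefficients in $E'$. Along the way one may assume the algebraic monodromy group $\bG_\ell$ of $\rho_\ell$ is connected: choosing a finite extension $K'/K$ with $\rho_\ell(\Gal_{K'})$ Zariski-dense in $\bG_\ell^\circ$, the restriction $\psi_\ell|_{\Gal_{K'}}$ is again a weak abelian direct summand of $\rho_\ell|_{\Gal_{K'}}$ (the set of places of $K'$ above a density-one set is of density one, and raising eigenvalues to the $f(w|v)$-th power preserves the divisibility of characteristic polynomials), and both the $E'$-rationality statement and the eventual local-algebraicity and de Rham conclusions descend along $K'/K$ --- the latter because, for abelian semisimple representations, local algebraicity is equivalent to the Hodge--Tate property of the localizations at $\ell$, which is insensitive to finite base change.

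Now I would feed this into Proposition~\ref{lem:WeakDiv2}. With $\bG_\ell$ connected (and reductive, as $\rho_\ell$ is semisimple) that proposition describes every weak abelian direct summand of $\rho_\ell$ in terms of the cocenter torus $\bG_\ell/\bG_\ell^{\der}$: up to the evident Frobenius-equivalence each of them is a direct summand of a direct sum of copies of a canonical abelian $\ell$-adic representation, built from the characters of $\bG_\ell$ that are trivial on the semisimple group $\bG_\ell^{\der}$ --- concretely, from the determinant characters $\det(\rho_\ell|_W)$ of the $\Gal_K$-stable $\bG_\ell$-isotypic constituents $W$ of $\rho_\ell$. Granting this, it suffices to show that these finitely many determinant characters are $E'$-rational for one number field $E'\supseteq E$. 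Here one exploits that the isotypic decomposition $\overline\Q_\ell^{\,n}=\bigoplus_j W_j^{\oplus m_j}$ is canonical: the characteristic polynomial of $\rho_\ell(\Frob_v)$ on any tensor construction on $\overline\Q_\ell^{\,n}$ --- in particular on $\bigwedge^{d}\overline\Q_\ell^{\,n}$ with $d=\dim W_j$, which contains $\det(\rho_\ell|_{W_j})$ as a canonical $1$-dimensional summand --- is a universal polynomial in the coefficients of $P_v(T):=\det(T-\rho_\ell(\Frob_v))\in E[T]$, and a careful bookkeeping of this $E$-rational data pins each $\det(\rho_\ell|_{W_j})$, hence $\psi_\ell$, down over a single number field $E'$.

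The main obstacle is exactly that last bookkeeping --- controlling the \emph{field of rationality} of the pieces. From the divisibility of characteristic polynomials alone one only learns that $\det(\rho_\ell|_W)(\Frob_v)$ is some product of roots of the $E$-rational polynomial $P_v(T)$, and such a product a priori lies in an extension of $E$ of bounded degree but of possibly $v$-dependent isomorphism type, so no single number field is visible. What collapses this ambiguity is precisely the rigidity provided by Proposition~\ref{lem:WeakDiv2}: a weak abelian direct summand must come from the cocenter of $\bG_\ell$, and the $E$-rationality of $\rho_\ell$, transported through the canonical isotypic decomposition, then fixes the field of definition uniformly in $v$. The remaining ingredients --- the reduction to connected $\bG_\ell$, the stability of the weak-abelian-direct-summand property under finite base change, and the descent of local algebraicity and the de Rham property along $K'/K$ via the Hodge--Tate characterization recalled in the introduction --- are routine.
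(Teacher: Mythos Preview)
Your reduction to connected $\bG_\ell$ and your invocation of Proposition~\ref{lem:WeakDiv2} are fine, but the core step --- producing a number field $E'$ over which $\psi_\ell$ is rational directly from the structure theory --- has a genuine gap.

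First, the identification of the building blocks is wrong. Proposition~\ref{lem:WeakDiv2} says the characters underlying a weak abelian direct summand are the $\xi_j=\nu_{\bG_{\rho_j}}$, and by Lemma~\ref{lem:OnNuG}(ii) one has $\det(\rho_j)=\xi_j^{\otimes r_j}$ with $r_j=\dim\rho_j$. So $\xi_j$ is an $r_j$-th root of the determinant of an irreducible constituent, not the determinant itself; in the prototypical case $\bG_{\rho_j}=\SO_3\times\mathbb G_m$ one has $\det(\rho_j)=\xi_j^3\neq\xi_j$. Second, even granting your determinant description, the ``canonical isotypic decomposition'' argument does not pin down a field: the decomposition is canonical over $\overline\Q_\ell$, not over $E$, and extracting a single summand from an $E$-rational exterior power amounts exactly to choosing, for each $v$, one product of roots of $P_v(T)$ among its Galois conjugates --- the ambiguity you yourself flagged. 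Your appeal to ``rigidity'' from Proposition~\ref{lem:WeakDiv2} does not resolve this: that proposition describes which $\overline\Q_\ell$-characters can occur, but says nothing about their field of rationality. Indeed, the analogous $E$-rationality statement for the abelian part $\rho_\ell^{\ab}$ is formulated in the paper as Conjecture~\ref{E-rat-conj} and is open beyond special cases like Theorem~\ref{E-rat-evi}.

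The paper's proof runs in the opposite order. Since the values $\psi_\ell(\Frob_v)$ are eigenvalues of $\rho_\ell(\Frob_v)$ (Proposition~\ref{rem:WeakDiv}) and hence algebraic, Waldschmidt's theorem (Theorem~\ref{thm:Waldschmidt}) gives an $N$ with $\psi_\ell^N$ locally algebraic, hence $E'$-rational for some $E'$. Only \emph{then} does the structure enter: Proposition~\ref{den1} takes the $\gcd$ of the $E'$-polynomial $\prod_i(T^N-\chi_i(\Frob_v)^N)$ with the $E'$-rational characteristic polynomial of $\rho_\ell(\Frob_v)$, using a regular-semisimple avoidance argument on the monodromy of $\rho_\ell\otimes\psi_\ell^{-1}$ (which is connected via the last part of Proposition~\ref{lem:WeakDiv2}) to isolate $\prod_i(T-\chi_i(\Frob_v))\in E'[T]$ on a density-one set. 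Proposition~\ref{Serre1} then upgrades almost-local-algebraicity plus density-one $E'$-rationality to genuine local algebraicity. The point is that Waldschmidt supplies the missing transcendence input up front; you cannot bypass it with group theory alone.
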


Let $E$ be a number field and $\{\rho_\lambda:\Gal_K\to\GL_n(\overline E_\lambda):~\lambda\in\Sigma_E\}$
a semisimple (Serre) compatible system of $K$ defined over $E$.
Denote the \emph{abelian part} (i.e., the maximal abelian subrepresentation) of $\rho_\lambda$
by $\rho_\lambda^{\ab}$. It is conjectured that  the family $\{\rho_\lambda^{\ab}\}$ is a compatible system;
but this seems a difficult question that is only resolved in some cases, cf.~\cite{Hu20}
Suppose the algebraic monodromy group $\bG_\lambda$ of $\rho_\lambda$ is connected for all $\lambda$.
The \emph{weak abelian part} of $\rho_\lambda$, denoted by $\rho_\lambda^{\wab}$, is defined to be the maximal weak abelian direct summand of $\rho_\lambda$. As an application of our results on 
weak abelian direct summands, we show
that $\{\rho_\lambda^{\wab}\}$ is likewise a compatible system (Theorem \ref{thm:WeakDiv}).
In next subsection, we describe our results on some three dimensional automorphic Galois representations.

\subsection{Automorphic Galois representations}
Let $K$ be a totally real (or CM) number field, $\pi$ a \emph{regular algebraic cuspidal automorphic 
representation} of $\GL_n(\A_K)$, and $\iota:\C\stackrel{\sim}{\rightarrow}\overline\Q_\ell$ 
a field isomorphism.
%Denote by $\Gal_K$ the absolute Galois group of $K$, by $\Sigma_K$ the set of finite places of $K$, 
Denote by $S\subset\Sigma_K$ the subset of places $v$ such that the local component 
$\pi_v$ is ramified.
Attached to the pair $(\pi,\iota)$
is a semisimple (conjecturally irreducible, see e.g., \cite{Ram08}) 
$\ell$-adic representation of $K$
\begin{equation}\label{repn}
\rho_{\pi,\iota}:\Gal_K\to\GL_n(\overline\Q_\ell)
\end{equation}
that satisfies \emph{local-global compatibility away from $\ell$}: 
if $v$ is a finite place of $K$ outside $S_\ell$, then
\begin{equation}\label{lg}
\mathrm{WD}(\rho_{\pi,\iota}|_{\Gal_{K_v}})^{ss}=
\iota\circ \mathrm{rec}_{K_v}(\pi_v\otimes |\det|_v^{\frac{1-n}{2}}).
\end{equation}
The construction of $\rho_{\pi,\iota}$ such that \eqref{lg} holds for all $v$ outside $S\cup S_\ell$
was given by \cite{HLTT16} (and resp. \cite{Sc15}) 
and the validity of \eqref{lg} for all $v$ outside $S_\ell$ was obtained in \cite{Va18}.
If $\pi$ is in addition \emph{polarized}, the construction of such an $\ell$-adic representation
was done earlier and moreover, the following $\ell$-adic Hodge properties hold for all $v\in S_\ell$:
\begin{enumerate}
\item[(dR):] $\rho_{\pi,\iota}|_{\Gal_{K_v}}$ is de Rham with determined Hodge-Tate numbers;
\item[(Cr):] $\rho_{\pi,\iota}|_{\Gal_{K_v}}$ is crystalline if $\pi_v$ is unramified,
\end{enumerate}
see \cite[$\mathsection2.1$]{BLGGT14b} and the references therein for more details.
These properties are crucial to investigating the irreducibility (resp. residual irreducibility if $\ell\gg0$) 
of \eqref{repn} \cite{CG13},\cite{PT15},\cite{Xi19},\cite{Hu23a}  
by using potential automorphy theorems \cite{BLGGT14b} (resp. the method of algebraic envelopes \cite{Hu23a}).
For general $\pi$, it is not even known that \eqref{repn}
is Hodge-Tate at $v\in S_\ell$ unless more conditions (e.g., residual irreducibility of $\rho_{\pi,\iota}$) are assumed, see \cite{ACC+23},\cite{A'C23}. 

When $K$ is totally real and $n=3,4$, 
D. Ramakrishnan obtained  the irreducibility of \eqref{repn} 
assuming some $\ell$-adic Hodge properties \cite[Theorem B]{Ram13}.
The idea for $n=3$ is as follows. If on the contrary $\rho_{\pi,\iota}$ is reducible, then it contains 
a one-dimensional subrepresentation $\tau_\lambda$ (a weak abelian direct summand). 
Assuming $\tau_\lambda$ is Hodge-Tate, Ramakrishnan deduces a contradiction to 
the cuspidality of $\pi$
by an L-function argument.
As a consequence of Theorem \ref{abpart}, we obtain the following unconditional irreducibility result, which inspires this~article.

%Then we deduce in return, the properties (dR) and (Cr) above, if $\rho_\ell$ is of \emph{irreducible type} ($A_2$) and $\ell$ belongs to a Dirichlet density one set of rational primes (Theorem \ref{mthm2}).

\begin{thm}\label{main}
Let $K$ be a totally real field, $\pi$ a
regular algebraic cuspidal automorphic 
representation of $\GL_3(\A_K)$, and $\iota:\C\stackrel{\sim}{\rightarrow}\overline\Q_\ell$
a field isomorphism. 
Then the three-dimensional $\ell$-adic Galois representation \eqref{repn} of $K$
attached to $(\pi,\iota)$
is irreducible.
\end{thm}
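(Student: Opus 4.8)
The plan is to argue by contradiction following Ramakrishnan's strategy for $n=3$, but with Theorem~\ref{abpart} supplying the previously missing Hodge-theoretic input. Suppose $\rho_{\pi,\iota}$ is reducible. Since it is semisimple of dimension $3$, it decomposes as a direct sum of subrepresentations of dimensions either $1+1+1$ or $1+2$; in either case there is a one-dimensional subrepresentation $\tau_\ell$ of $\rho_{\pi,\iota}$. In particular $\tau_\ell$ is an abelian subrepresentation, hence a weak abelian direct summand of $\rho_{\pi,\iota}$: for almost all $v$ the characteristic polynomial of $\tau_\ell(\Frob_v)$ (a degree-one polynomial) divides that of $\rho_{\pi,\iota}(\Frob_v)$.

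The first key step is to verify that the hypotheses of Theorem~\ref{abpart} apply to $\rho_\ell:=\rho_{\pi,\iota}$. Semisimplicity holds by construction. For $E$-rationality one invokes the local-global compatibility \eqref{lg}: for $v\notin S_\ell$ the characteristic polynomial of $\rho_{\pi,\iota}(\Frob_v)$ is determined by $\iota\circ\mathrm{rec}_{K_v}(\pi_v\otimes|\det|_v^{(1-n)/2})$, and the Hecke eigenvalues of $\pi$ lie in a number field (the field of rationality of $\pi$), so after transport via $\iota$ the Frobenius characteristic polynomials have coefficients in a fixed number field $E\subset\overline\Q_\ell$. Thus $\rho_{\pi,\iota}$ is a semisimple $E$-rational $\ell$-adic representation of $K$, and by Theorem~\ref{abpart} the weak abelian direct summand $\tau_\ell$ is locally algebraic; in particular, for every place $w\in S_\ell$ of $K$, the local character $\tau_\ell|_{\Gal_{K_w}}$ is de Rham (equivalently Hodge-Tate).

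With $\tau_\ell$ now known to be de Rham at all places above $\ell$, the argument concludes exactly as in \cite[Theorem B]{Ram13} for $n=3$. One attaches to the locally algebraic (hence arising from a Hecke character) $\tau_\ell$ a Dirichlet--Hecke character $\chi$ of $K$, and compares $L$-functions: the occurrence of $\tau_\ell$ inside $\rho_{\pi,\iota}$ forces $L(s,\pi)$ to be divisible by the abelian $L$-function $L(s,\chi)$ in a way incompatible with the analytic properties of $L(s,\pi)$ guaranteed by the cuspidality of $\pi$ on $\GL_3$ (Jacquet--Shalika), because a cuspidal automorphic representation of $\GL_3(\A_K)$ cannot have a $\GL_1$-type ``Eisenstein'' factor in its $L$-function. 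This contradiction shows $\rho_{\pi,\iota}$ is irreducible.

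The main obstacle is precisely the de Rham property of $\tau_\ell$ at places above $\ell$: for a general (non-polarizable) $\pi$ the representation $\rho_{\pi,\iota}$ is not known to be Hodge--Tate at $S_\ell$, so one cannot cite the usual $\ell$-adic Hodge properties (dR)/(Cr). It is here that Theorem~\ref{abpart} is essential — it bypasses the Hodge theory of $\rho_{\pi,\iota}$ itself and instead establishes local algebraicity of the abelian summand $\tau_\ell$ directly from $E$-rationality via the Serre--Waldschmidt theory. A secondary technical point is to ensure $E$-rationality is stated with a single number field $E$ valid for almost all $v$; this follows from the rationality of the automorphic representation $\pi$ together with \eqref{lg}, and requires no new idea. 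Once these two inputs are in place, the $L$-function comparison is the standard one and presents no further difficulty.
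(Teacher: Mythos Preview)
Your strategy is the paper's strategy: assume reducibility, extract a character $\tau_\ell$, use Theorem~\ref{abpart} to get local algebraicity of $\tau_\ell$ (hence it comes from an algebraic Hecke character), then run Ramakrishnan's $L$-function argument. Citing \cite{Ram13} for the endgame is legitimate, and your identification of Theorem~\ref{abpart} as the missing ingredient is exactly right.

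That said, your summary of the $L$-function step is inaccurate and, if taken literally, has a gap. The naive route you sketch --- $L^{S}(s,\pi\otimes\chi^{-1})=\zeta_K^{S}(s)\cdot L^{S}(s,\sigma_\lambda\otimes\tau_\lambda^{-1})$, pole on the right versus holomorphy on the left from Jacquet--Shalika --- does not close, because $\sigma_\lambda$ is not known to be automorphic and you cannot exclude a zero of $L^{S}(s,\sigma_\lambda\otimes\tau_\lambda^{-1})$ at $s=1$. What the paper actually does (following Ramakrishnan) is pass through $\mathrm{Alt}^2(\rho_\lambda)$ to obtain an identity
\[
L^{S'}(\mathbf{1},s)\,L^{S'}(\mathrm{Alt}^2(\pi)\otimes\psi_1,s)=L^{S'}(\pi\otimes\psi_2,s)\,L^{S'}(\psi_3,s)
\]
in which every factor is an automorphic $L$-function; then Shahidi's non-vanishing of twisted exterior-square $L$-functions at $s=1$ forces a pole on the left while the right is finite, provided $\psi_3$ (corresponding to $\det(\sigma_\lambda)\tau_\lambda^{-2}$) is non-trivial. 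There is also a genuine case split you do not mention: if $\det(\sigma_\lambda)=\tau_\lambda^{2}$, the $L$-function argument gives nothing; instead $\rho_\lambda\otimes\tau_\lambda^{-1}$ is self-dual, so by multiplicity one $\pi$ is essentially self-dual, and one concludes by the known irreducibility in the polarized case (e.g.\ \cite[\S4.4.3]{Hu23a}). If you keep the citation to \cite{Ram13} and drop the ``divisibility'' paraphrase, your write-up is fine; but be aware that the argument behind the citation is the $\mathrm{Alt}^2$/Shahidi one plus the self-dual bifurcation, not a direct Jacquet--Shalika comparison.
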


In the opposite direction, we deduce in Theorem \ref{mthm2} for $\rho_\ell$ of \emph{irreducible type} ($A_2$) that properties (dR) and (Cr) hold for rational primes $\ell$ in a Dirichlet density one set.

\subsection{Structure of the article}
We briefly describe the remaining sections of this article. 
We first recall basic notions and facts about (abelian) 
$\ell$-adic representations 
in $\mathsection2.1$--$\mathsection2.5$. 
Then we introduce weak abelian direct summands 
of an almost everywhere unramified semisimple $\ell$-adic Galois representation 
and completely characterize them under mild conditions on $\bG_\ell$ 
(Proposition \ref{lem:WeakDiv2}) 
in $\mathsection2.6$. The main Theorem \ref{abpart} is established  in $\mathsection2.7$ by 
using $\mathsection2.1$--$\mathsection2.6$ and Serre-Waldschmidt theory. 
Finally in $\mathsection2.8$--$\mathsection2.9$,
we formulate a conjecture on $E$-rationality of the abelian part $\rho_\ell^{\ab}$ (Conjecture \ref{E-rat-conj}) with some evidence (Theorem \ref{E-rat-evi})  and prove the compatibility of the weak abelian part of a connected semisimple  Serre compatible system (Theorem \ref{thm:WeakDiv}). 

In $\mathsection 3.1$, we introduce the Serre compatible system $\{\rho_{\pi,\lambda}\}$ 
of $\ell$-adic representations attached to $\pi$ and the algebraic monodromy groups $\bG_\lambda$.
When $K$ is totally real and $n=3$, we prove the irreducibility of $\rho_{\pi,\lambda}$ 
(Theorem \ref{main}) and $\lambda$-independence of $\bG_\lambda$ in $\mathsection 3.2$. Moreover, 
we deduce some $\ell$-adic Hodge properties of $\{\rho_{\pi,\lambda}\}$ if it is of type ($A_2$) 
(Theorem \ref{mthm2}) in $\mathsection 3.3$.%}

\section{Weak abelian direct summands of $\ell$-adic representation}
We fix some notation. 
Let $K$ be a number field and $\Sigma_K$ the set of finite places of $K$.
For a rational prime $\ell$, denote by $S_\ell$ 
the set of finite places $v\in\Sigma_K$ above $\ell$.
An \emph{$\ell$-adic representation} of $K$
is a continuous homomorphism $\rho_\ell:\Gal_K\to\GL_n(\overline\Q_\ell)$.
The \emph{algebraic monodromy group} $\bG_\ell$ of $\rho_\ell$
is the Zariski closure of the image $\rho_\ell(\Gal_K)$ in $\GL_{n,\overline\Q_\ell}$. 

\subsection{$E$-rationality}\label{s21}
Let $E\subset\overline\Q_\ell$ be a number field. An $n$-dimensional $\ell$-adic representation 
\begin{equation*}
\rho_\ell:\Gal_K\to\GL_n(\overline\Q_\ell)
\end{equation*}
is said to be \emph{$E$-rational} \cite[Chap. I]{Se98} if 
 there is a finite subset $S\subset\Sigma_K$ 
such that for every finite place $v\in\Sigma_K\backslash S$,
the representation $\rho_\ell$ is unramified at $v$ and the characteristic polynomial
of $\rho_\ell(\Frob_v)$ has coefficients in $E$, i.e.,
\begin{equation*}\label{charpoly}
\det(\rho_\ell(\Frob_v)-T\cdot\text{Id})\in E[T],
\end{equation*}
where $\Frob_v$ denotes the \emph{Frobenius class} at $v$.

\subsection{Serre compatible system}\label{s22}
Let  $S\subset\Sigma_K$ a finite subset and $E$ a number field.
For $\lambda\in\Sigma_E$, denote its residue characteristic by $\ell(\lambda)$ 
(and so that $\overline E_\lambda\cong\overline\Q_{\ell(\lambda)}$).
A family of $n$-dimensional $\ell$-adic representations (indexed by $\Sigma_E$)
\begin{equation}\label{csdef}
\{\rho_\lambda:\Gal_K\to \GL_n(\overline E_\lambda):~\lambda\in\Sigma_E\}
\end{equation}
is said to be a \emph{Serre compatible system} (SCS) of $K$ defined over $E$
 unramified outside $S$ (\cite[Chap. 1]{Se98}) if 
there exists $P_v(T)\in E[T]$ for all $v\in\Sigma_K\backslash S$ such that
for any $\lambda\in\Sigma_E$ and $v\in \Sigma_{K}\backslash (S\cup S_{\ell(\lambda)})$, 
the representation $\rho_\lambda$ is unramified at $v$ and 
$$\det(\rho_\lambda(\Frob_v)-T\cdot\text{Id})=P_v(T)\in E[T].$$
The compatible system \eqref{csdef} is said to be \emph{semisimple}
if $\rho_\lambda$ 
is semisimple for all $\lambda$.

\subsection{Local algebraicity}\label{s23}
Define $\bT_K:=\mathrm{Res}_{K/\Q}\mathbb{G}_{m,K}$ 
to be the $[K:\Q]$-dimensional $\Q$-torus given by restriction of scalars of 
$\mathbb{G}_{m,K}:=\mathrm{Spec}K[x,x^{-1}]$.
For any commutative $\Q$-algebra $A$,  its group of $A$-points is
$$\bT_K(A)=(K\otimes_\Q A)^\times.$$
Let $\A_K^\times$ be the group of id\`eles of $K$ and $i_\ell$ be the natural composition
\begin{equation*}\label{localArtin}
i_\ell:(K\otimes_\Q\Q_\ell)^\times=\prod_{v|\ell}K_v^\times\to \A_K^\times/K^\times \stackrel{\mathrm{Art}_K}{\longrightarrow}\Gal_K^{\text{ab}},
\end{equation*}
where $\mathrm{Art}_K$ is the Artin reciprocity map.

An abelian semisimple $\ell$-adic representation 
\begin{equation*}
\phi_\ell:\Gal_K^{\ab}\to\GL_n(\overline\Q_\ell)
\end{equation*}
is said to be \emph{locally algebraic} \cite[Chap. III]{Se98}
if there exists an $\overline\Q_\ell$-algebraic morphism
$$r_\ell:\bT_K\times_\Q\overline\Q_\ell \to \GL_{n,\overline\Q_\ell}$$
such that for all $x$ close enough to $1$ in the $\ell$-adic Lie group 
$(K\otimes_\Q\Q_\ell)^\times=\bT_K(\Q_\ell)$:
\begin{equation*}\label{eq:LocAlgMap}
\phi_\ell\circ i_\ell(x)=r_\ell(x^{-1}).
\end{equation*}

\begin{remark}
Local algebraicity of $\phi_\ell$ depends solely on the local representations
\begin{equation*}\label{locrepn}
\phi_\ell|_{\Gal_{K_v}}\hspace{.2in}\text{for all}~ v\in S_\ell.
\end{equation*}
By \cite[Appendix of Chap. III]{Se98}, $\phi_\ell$ is locally algebraic if and only if these local representations 
are Hodge-Tate. Moreover, being Hodge-Tate and being de Rham are equivalent
for these abelian local representations, see \cite[Proposition in $\mathsection$6]{FM95}.
\end{remark}

\subsection{Equivalence of three notions}\label{s24}
The following three notions of an abelian semisimple 
$$\phi_\ell:\Gal_K^{\ab}\to\GL_n(\overline\Q_\ell)$$ are equivalent.\\
	
	(Loc-alg): $\phi_\ell$ is locally algebraic.\\

 ($E$-SCS): $\phi_\ell\in \{\phi_\lambda\}_\lambda$ a semisimple abelian Serre compatible system
  of $K$ defined over some number field $E$ unramified outside some finite $S\subset\Sigma_K$.\\
	
	 ($E$-rat): $\phi_\ell$ is $E$-rational for some number field $E\subset\overline\Q_\ell$.\\

For direction (Loc-alg) $\Rightarrow$ ($E$-SCS), it is shown in \cite[Chap. III.2.3, proof of Theorem 2]{Se98}
that $\phi_\ell$ admits a factorization
\begin{equation}\label{factor}
\phi_\ell:\Gal_K^{\ab}\stackrel{\epsilon_\ell}{\longrightarrow}\bS_\mathfrak{m}(\overline\Q_\ell)\stackrel{\phi_0}{\longrightarrow}\GL_n(\overline\Q_\ell),
\end{equation}
where 
\begin{itemize}
\item $\bS_\mathfrak{m}$ is the Serre group (for some modulus $\mathfrak m$ of $K$)\footnote{The identity component $\bT_\mathfrak{m}$ of $\bS_\mathfrak{m}$ is a quotient of $\bT_K$
and the component group $\bS_\mathfrak{m}/\bT_\mathfrak{m}$ 
is isomorphic to the Ray class group of $\mathfrak{m}$.}, which is a diagonalizable group defined over $\Q$;
\item $\epsilon_\ell:\Gal_K^{\ab}\to \bS_\mathfrak{m}(\Q_\ell)$ is the canonical morphism 
constructed (for every $\ell$) in \cite[Chap. II.2.3]{Se98};
\item $\phi_0$ is an $\overline\Q_\ell$-algebraic morphism, which we assume to be a direct sum of characters.
\end{itemize}
Since $\bS_\mathfrak{m}$ is defined over $\Q$, we may assume $\phi_0$ to be defined over 
some number field $E$. Then the family 
\begin{equation}\label{Serresys}
\{\phi_\lambda:\Gal_K^{\ab}\stackrel{\epsilon_{\ell(\lambda)}}{\longrightarrow}\bS_\mathfrak{m}(E_\lambda)
\stackrel{\phi_0\times_E E_\lambda}{\longrightarrow}\GL_n(E_\lambda)\subset\GL_n(\overline E_\lambda):~\lambda\in\Sigma_E\}
\end{equation}
is the SCS we want.

  The direction ($E$-SCS) $\Rightarrow$ ($E$-rat) is obvious. Finally for 
  ($E$-rat) $\Rightarrow$ (Loc-alg), 
according to the strategy of Serre in \cite[Chap. III.3]{Se98} there are two steps:
the first one is to show that $\phi_\ell$ is \emph{almost locally algebraic} (i.e., $\phi_\ell^N$ is locally algebraic for some $N\in\N$) and the second one is to show that almost local algebraicity implies local algebraicity for an $E$-rational $\phi_\ell$ (see Proposition \ref{Serre1}).
The first step is a direct consequence of Theorem \ref{thm:Waldschmidt} below which 
is due to Waldschmidt \cite{Wa81} (see e.g., \cite[Theorem 7]{He82}).
%proven (see e.g., \cite[Theorem 7]{He82}) by a remarkable theorem of Waldschmidt on transcendental numbers \cite{Wa81}.

\begin{theorem}[Waldschmidt]\label{thm:Waldschmidt}
Suppose $\tau_\ell:\Gal_K\to\overline\Q_\ell^\times$ is an one-dimensional $\ell$-adic representation unramified outside a finite set $S\subset \Sigma_K$ such that $\tau_\ell(\Frob_v)\in \overline\Q$ for all $v\in \Sigma_K\setminus S$. Then there exists $N\in\N$ such that $\tau_\ell^N$ is locally algebraic.
\end{theorem}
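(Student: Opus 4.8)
The plan is to reduce the statement to the classical transcendence-theoretic input and then bookkeep the resulting algebraicity through Serre's local description of abelian $\ell$-adic characters. First I would replace $\Gal_K$ by $\Gal_K^{\ab}$, since $\tau_\ell$ is one-dimensional, and restrict attention to the places $v\in S_\ell$: by the remark in $\mathsection$\ref{s23} and the Appendix to \cite[Chap.~III]{Se98}, local algebraicity of a power of $\tau_\ell$ is equivalent to the local characters $\tau_\ell|_{\Gal_{K_v}}$ for $v\in S_\ell$ being Hodge–Tate up to taking the same power. So the whole question is local at $\ell$: for each $v\mid\ell$, one must show that $\tau_\ell|_{\Gal_{K_v}}$ raised to a suitable power has integer Hodge–Tate–Sen weights, and then a single common $N$ works.

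Next I would invoke the global $\ell$-adic logarithm/Tate-module machinery that packages the hypothesis $\tau_\ell(\Frob_v)\in\overline\Q$ for almost all $v$. Composing $\tau_\ell$ with the idele-class interpretation via $i_\ell$ as in $\mathsection$\ref{s23}, the restriction to the local units $\prod_{v\mid\ell}\cO_{K_v}^\times$ is, after a finite-index refinement, given by a continuous homomorphism to $1+\ell^m\Z_\ell$, hence by $\ell$-adic analytic data; its ``infinitesimal character'' is an element of $\Hom_{\Q_\ell}(K\otimes_\Q\Q_\ell,\Q_\ell)=\bigoplus_{v\mid\ell}\Hom_{\Q_\ell}(K_v,\Q_\ell)$, i.e.\ a tuple of Sen weights. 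The algebraicity of the Frobenius values translates, via Čebotarev and the product formula on ideles, into a system of linear relations with algebraic coefficients among $\ell$-adic logarithms of algebraic numbers (the $\tau_\ell(\Frob_v)$ and the archimedean/$\ell$-adic absolute values of uniformizers). This is exactly the shape of hypothesis to which Waldschmidt's theorem on linear independence of $\ell$-adic logarithms applies.

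Then I would apply Waldschmidt's $\ell$-adic six-exponentials / linear-forms-in-logarithms result (cf.\ \cite{Wa81}, \cite[Theorem~7]{He82}): it forces the Sen weights $n_v$ (a priori elements of $\overline\Q_\ell$, or of the relevant local field) to be \emph{rational} numbers. Since there are only finitely many of them, clearing denominators gives a single $N\in\N$ such that all $N n_v\in\Z$; equivalently, each $\tau_\ell^N|_{\Gal_{K_v}}$ has integral Sen weights, hence is Hodge–Tate, hence (again by the Appendix of \cite[Chap.~III]{Se98}) $\tau_\ell^N$ is locally algebraic. The main obstacle is the second step: correctly extracting from the mod-$S$ algebraicity of Frobenius values a clean finite family of linear forms in $\ell$-adic logarithms of algebraic numbers to which Waldschmidt's theorem applies verbatim — one must be careful that the coefficients appearing (coming from the local reciprocity maps and from valuations of $S$-units) are algebraic and that the rank condition in the six-exponentials theorem is met, which is why the conclusion is only that \emph{some} power $\tau_\ell^N$, rather than $\tau_\ell$ itself, is locally algebraic.
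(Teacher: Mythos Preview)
The paper does not give a proof of this theorem: it is stated as a black-box input, attributed to Waldschmidt \cite{Wa81} with a pointer to Henniart \cite[Theorem~7]{He82} for the formulation used here. So there is no ``paper's own proof'' to compare against; the authors simply cite the result and move on.

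Your sketch is, in broad outline, the standard Serre--Henniart argument that underlies the cited references, and the overall architecture is right: pass to the idele-class picture, analyze the restriction of the character to $(K\otimes_\Q\Q_\ell)^\times$ near $1$ (equivalently, its Lie-algebra / Sen data), and use Waldschmidt's $\ell$-adic transcendence theorem to force the relevant exponents to be rational, so that some power is locally algebraic. A couple of points where your write-up is loose compared to the actual proof in \cite{He82}: (i) the transcendence input is not literally a six-exponentials statement but a linear-independence result for $\ell$-adic logarithms of algebraic numbers, and one has to produce enough independent Frobenius elements (via Chebotarev and the infinitude of primes with prescribed splitting) to meet the rank hypothesis---this is the step you flag as the ``main obstacle,'' and it does require real work; (ii) the passage ``Sen weights rational $\Rightarrow$ some power Hodge--Tate $\Rightarrow$ locally algebraic'' is correct in spirit, but in Henniart's argument one works directly with the algebraic characters of $\bT_K$ rather than going through Sen theory, which is cleaner and avoids having to invoke the Hodge--Tate $\Leftrightarrow$ locally algebraic equivalence twice. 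None of this is a genuine gap in your plan, but if you want a self-contained proof you should follow \cite[\S\S2--3]{He82} rather than rederive it.
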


\subsection{%\textcolor[rgb]{0,0,1}
{Formal character}}
Let $F$ be an algebraically closed field 
and $\bG\subset\GL_{n,F}$ a reductive subgroup.
The \emph{formal character} of $\bG$ is defined as a subtorus 
\begin{equation}\label{fc}
\bT\subset\GL_{n,F}
\end{equation}
up to $\GL_n(F)$-conjugation
such that $g\bT g^{-1}$ is a maximal torus of $\bG$ for some $g\in\GL_n(F)$.

After some conjugation, \eqref{fc} can be diagonalized and thus admits some $\Z$-forms:
\begin{equation}\label{Zformfc}
\bT_\Z\subset\mathbb{G}_{m,\Z}^n.
\end{equation}
If $\bG'\subset\GL_{n,F'}$ is another reductive subgroup ($F'$ some algebraically closed field),
we say that the formal characters of $\bG$ and $\bG'$ are \emph{the same} if 
they admit the same $\Z$-form \eqref{Zformfc} after some diagonalizations.

\subsection{Weak abelian direct summands} 
%\textcolor[rgb]{0,0,1}
{Throughout this subsection, 
we denote by $F$ the algebraically closed field $\overline\Q_\ell$ for simplicity.
Let $\rho:\Gal_K\to\GL_n(F)$ be a semisimple $\ell$-adic representation that is unramified almost everywhere.
We investigate a notion weaker than (abelian) subrepresentation of $\rho$, 
called \emph{weak (abelian) direct summand}. When the algebraic monodromy group $\bG_\rho$
satisfies some hypotheses (e.g., $\bG_\rho$ is connected), weak abelian direct summands of $\rho$
are completely characterized and the \emph{weak abelian part} (i.e., the maximal weak abelian direct summand) 
of $\rho$ is well-defined (see Proposition \ref{lem:WeakDiv2}).}

\subsubsection{%\textcolor[rgb]{0,0,1}
{Weak divisibility}}

\begin{defi}\label{def:WeakDiv}
Let $\rho:\Gal_K\to\GL_n(F)$ and $\psi:\Gal_K\to\GL_m(F)$ be semisimple $\ell$-adic 
representations that are unramified almost everywhere. Denote by
 $S_\rho,S_\psi\subset \Sigma_K$ the set of ramified places of $\rho$ and $\chi$, respectively. 
We say that $\psi$ is a weak direct summand of $\rho$ (or $\psi$ weakly divides $\rho$) if 
the set
\begin{equation}\label{def:Places-psi-wd-rho}
S_{\psi\mid\rho}:=\{v\in\Sigma_K\setminus (S_\rho\cup S_\psi): \det(\psi(\Frob_v)-T\cdot\text{Id}) ~\mathrm{\,divides~\,}\det(\rho(\Frob_v)-T\cdot\text{Id})\} 
\end{equation}
is of Dirichlet density one. If $\psi$ is abelian and weakly divides $\rho$, 
we say that $\psi$ is a weak abelian direct summand of $\rho$.
\end{defi}

%\textcolor[rgb]{0,0,1}
{Similarly, one can define weak divisibility for a pair of semisimple Serre compatible systems of $K$ defined over $E$. Note that if $\psi$ is a subrepresentation of $\rho$ then $\psi$ weakly divides $\rho$.
}

\begin{prop} \label{rem:WeakDiv}
If $\psi$ weakly divides $\rho$, then $S_{\psi\mid\rho}=\Sigma_K\setminus (S_\rho\cup S_\psi)$.
\end{prop}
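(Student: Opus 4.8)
The statement asserts that if $\psi$ weakly divides $\rho$ — so divisibility of characteristic polynomials holds on a density-one set — then in fact it holds at \emph{every} unramified place. The natural tool is the Chebotarev density theorem applied to the joint representation $\rho \oplus \psi$ of $\Gal_K$. The key point is that whether $\det(\psi(\Frob_v) - T) $ divides $\det(\rho(\Frob_v) - T)$ depends only on the conjugacy class of $(\rho \oplus \psi)(\Frob_v)$, and more precisely only on the characteristic polynomials of $\rho(\Frob_v)$ and $\psi(\Frob_v)$, which in turn depend only on the image of $\Frob_v$ in the finite quotient cut out by… well, not a finite quotient directly, so one must be a little careful.

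\medskip

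First I would reduce the problem to a statement about a \emph{fixed} pair of semisimple conjugacy classes. Fix $v_0 \in \Sigma_K \setminus (S_\rho \cup S_\psi)$; I want to show $v_0 \in S_{\psi \mid \rho}$. Consider the set $C$ of elements $g \in \Gal_K$ whose image under $\rho$ (resp.\ $\psi$) has the same characteristic polynomial as $\rho(\Frob_{v_0})$ (resp.\ $\psi(\Frob_{v_0})$). This set $C$ is stable under conjugation and, crucially, it is \emph{closed} in $\Gal_K$ (characteristic polynomials are continuous functions of the matrix entries, which are continuous on $\Gal_K$, and a single point in $\overline{\Q}_\ell^{\,n}$ is closed). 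Moreover $C$ has nonempty interior relative to… hmm — actually the cleaner route is: by Chebotarev, the set of places $v$ with $\Frob_v \in C$ has positive density? No — that is false for a single conjugacy class in an infinite group.

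\medskip

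So the correct approach is subtler and goes through the algebraic monodromy group. Let $\bG = \bG_{\rho \oplus \psi}$ be the algebraic monodromy group of $\rho \oplus \psi$, a subgroup of $\GL_n \times \GL_m$, and let $\bG^\circ$ be its identity component, cutting out a finite Galois extension $K'/K$ with $\Gal(K'/K) = \bG/\bG^\circ = \pi_0(\bG)$. The characteristic-polynomial data of $(\rho \oplus \psi)(\Frob_v)$ is governed by the image of $\rho \oplus \psi$, whose closure is $\bG$; by Chebotarev combined with the fact (Serre) that $\rho\oplus\psi$ maps $\Gal_K$ onto a subgroup whose closure is $\bG$, the Frobenii $\Frob_v$ equidistribute — in particular, for each connected component $\bG \cdot c$ of $\bG$, the set of $v$ with $(\rho\oplus\psi)(\Frob_v)$ in the closure of that component has density $1/|\pi_0(\bG)|$. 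Now the divisibility condition in \eqref{def:Places-psi-wd-rho} is a Zariski-closed condition on the component containing $(\rho\oplus\psi)(\Frob_v)$: on each connected component $Z$ of $\bG$, the locus where $\det(\pi_2(g) - T)$ divides $\det(\pi_1(g) - T)$ (with $\pi_1,\pi_2$ the two projections) is Zariski-closed, being defined by vanishing of the relevant resultant-type polynomials in the coefficients. Since this locus contains a density-one set of Frobenii, and Frobenii in component $Z$ are Zariski-dense in $Z$ (each component has density $1/|\pi_0|>0$, hence infinitely many Frobenii, hence Zariski-dense by the minimality of $\bG$ as the closure — here one uses that $\bG^\circ$ is connected so no proper closed subset can catch a positive-density subset of a single coset's Frobenii), the divisibility locus must be \emph{all} of $Z$. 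As this holds for every component $Z$, every unramified $\Frob_v$ satisfies the divisibility, i.e.\ $S_{\psi\mid\rho} = \Sigma_K \setminus (S_\rho \cup S_\psi)$.

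\medskip

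\textbf{The main obstacle.} The delicate step is the claim that a density-one subset of the Frobenii lying in a \emph{single} connected component $Z$ of $\bG$ is Zariski-dense in $Z$. Equidistribution via Chebotarev tells us these Frobenii are dense in $Z(\overline{\Q}_\ell)$ for the $\ell$-adic topology when combined with the usual argument that $\bG$ is, by definition, the Zariski closure of $\rho\oplus\psi(\Gal_K)$; but one must argue that removing a density-zero subset cannot destroy Zariski density. This follows because if the remaining Frobenii in $Z$ were contained in a proper closed subvariety $Z' \subsetneq Z$, then $\rho\oplus\psi(\Gal_K)$ would be contained in $\bG^\circ \cdot (\text{something}) \cup Z'$, contradicting that $\bG$ (with $Z$ a genuine component) is the full Zariski closure — more carefully, the Frobenii in \emph{all} components together are Zariski-dense in $\bG$ (standard), and if one component's contribution sat inside a proper subvariety the closure would be smaller. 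Making this precise — perhaps by invoking that the $\Frob_v$ for $v$ in a density-one set already have Zariski closure equal to $\bG$, which is exactly the content one gets by running Serre's Chebotarev/monodromy argument with $S$ enlarged by a density-zero set — is the technical heart. Everything else (continuity of characteristic polynomials, Zariski-closedness of the divisibility locus on each component) is routine.
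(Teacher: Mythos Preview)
Your approach is correct, but it differs from the paper's and is heavier than necessary.

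The paper's proof works entirely in the profinite topology on $\Gal_K$ and never introduces the algebraic monodromy group. It encodes the divisibility $\det(\psi(\sigma)-T)\mid\det(\rho(\sigma)-T)$ via the Sylvester matrix of the two characteristic polynomials: divisibility holds exactly when all $r\times r$ minors with $r>n$ vanish. Each such minor is a polynomial in the coefficients of the characteristic polynomials, hence a \emph{continuous conjugation-invariant} function $\Gal_K\to F$. Since these functions vanish on a density-one set of Frobenii, and a density-one set of Frobenii is dense in $\Gal_K$ for the profinite topology (classical Chebotarev), continuity forces them to vanish identically. That immediately gives divisibility for every $\sigma\in\Gal_K$, in particular for every unramified $\Frob_v$.

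Your argument instead passes to the algebraic monodromy group $\bG_{\rho\oplus\psi}$, observes that the divisibility locus is Zariski-closed and conjugation-invariant, and then invokes the \emph{algebraic} Chebotarev density theorem (Serre \cite{Se81a}, Rajan \cite{Raj98}) to conclude that a proper closed conjugation-invariant subset of $\bG$ cannot capture a density-one set of Frobenii. This is valid, and the ``main obstacle'' you flag is exactly what that theorem resolves --- no further work is needed beyond citing it. But compared to the paper's route it imports more machinery: you need the Zariski topology, the algebraic monodromy group, and the component analysis, whereas the paper needs only that the Sylvester minors are continuous on $\Gal_K$. The payoff of your approach is conceptual alignment with the monodromy-group language used later in the paper; the payoff of the paper's approach is that it is a two-line argument once one writes down the Sylvester matrix.
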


\begin{proof}
Let $n$ and $m$ be the degrees of $\rho$ and $\psi$. For $\sigma\in \Gal_K$, denote by $\Syl_{\lambda,\sigma}$ the Sylvester matrix in $M_{(n+m)\times(n+m)}(E_\lambda)$ for the polynomials 
$r_{\lambda,\sigma}(T):=\det(\rho(\sigma)-T\cdot\text{Id})$ and 
$s_{\lambda,\sigma}(T):=\det(\psi(\sigma)-T\cdot\text{Id})$ in $F[T]$ of degrees $n$ and $m$, respectively. By elementary linear algebra one has the following well-known result
\begin{equation}
\rank \Syl_{\lambda,\sigma} = n+m - \deg \gcd(r_{\lambda,\sigma},s_{\lambda,\sigma}).
\end{equation}
Fix $r$ row and column labels in $\{1,\ldots,n+m\}$, i.e., $r$ element subsets $I,J\subset \{1,\ldots,n+m\}$, and consider the determinant of the $r\times r$ minor of $\Syl_{\lambda,\sigma}$ spanned by the chosen rows and columns; call it $d_{\lambda,I,J}(\sigma)$. Then for each $\lambda,I,J$ the map $d_{\lambda,I,J}:G_K\to E_\lambda$ that sends $\sigma\mapsto d_{\lambda,I,J}(\sigma)$ 
is continuous, and by our hypothesis, whenever $|I|=|J|>n$, the map vanishes on the elements $\Frob_v$, for $v$ in a density one subset of places of $K$. Hence by the Chebotarev density theorem the map  $ d_{\lambda,I,J}$ is identically zero for any $|I|=|J|>n$. It follows that $s_{\lambda,\sigma}$ divides $r_{\lambda,\sigma}$ for all $\sigma\in \Gal_K$, and choosing $\sigma=\Frob_v$ for $v\in \Sigma_K\setminus (S_\rho\cup S_\psi)$ and $\lambda$ a place of $E$ with residue characteristic different from $v$, the result follows.
\end{proof}

\begin{remark}
{\em The notion of weak divisibility from Definition~\ref{def:WeakDiv} has some pitfalls that we'd like to point out: It is easy to see that if $\psi$ weakly divides $\rho$ then the representation $\psi$ need not be a direct summand of $\rho$; for instance, the trivial $1$-dimensional representation is a weak direct summand of any $3$-dimensional $\ell$-adic representation with image in $\SO_3$. But in fact one can have $\rho$ and $\psi$ abelian and semisimple with $\psi$ weakly dividing $\rho$, and yet $\psi$ not being a direct summand of $\rho$. Let $L/K$ be a finite Galois extension with $\Gal(L/K)\cong\Z/2\times\Z/2$, let $\chi_i$, $i=0,1,2,3$, denote the characters $\Gal(L/K)\to\{\pm1\}$. Then any $\chi_i$ is a weak direct summand of the sum of the other $3$ characters.

In the abelian case, this type of problem can arise whenever there are characters $\chi$ and $\chi'$ that are direct summands of $\psi$ and $\rho$ such that $\chi'\otimes\chi^{-1}$ has finite non-trivial order. Ruling out this case, below in Proposition \ref{lem:WeakDiv2} and Corollary \ref{cor:WeakDiv2} we shall analyze in detail when an abelian $\psi$ weakly divides $\rho$.}
\end{remark}

\subsubsection{%\textcolor[rgb]{0,0,1}
{Some lemmas}}
For the next results we denote by $\mathbb{G}_m$ the center of $\GL_{n,F}$, and by $\UOne:\Gal_K\to\GL_1(F)$ the trivial character.
For a reductive group $\bG$ defined over $F$, denote by $\bG^{\der}$ the derived group of the identity component $\bG^\circ$.

\begin{lem}\label{lem:OnNuG}
Let $\bG\subset\GL_{n,F}$ be a closed subgroup that acts irreducible  on $F^n$ 
and suppose that $\bG/Z(\bG)$ is connected, where $Z(\bG)\subset\bG$ is the center. 
Let $n_0$ be the multiplicity of the weight zero in the formal character of
$\bG^{\der}$ on $F^n$. 
Then the following hold:
\begin{enumerate}
\item[(i)] The center $Z(\bG)$ lies in $\mathbb{G}_m$, and so is equal to $\bG\cap\mathbb{G}_m$, an element $\zeta\cdot1$ in the center acts by multiplication with $\zeta$ on $F^n$. Moreover, if $Z(\bG)$ is finite then $\bG^\circ$ is semisimple. 
In all cases we have $\bG=(\bG\cap\mathbb{G}_m)\bG^{\der}$.
\item[(ii)] Suppose $n_0>0$. Then we have:
 $\bG^{\der} \cap \mathbb{G}_m=\{1\}$ and a direct product
\[ \bG=\bG^{\der}\times (\bG\cap\mathbb{G}_m).\]
Moreover denoting by $\nu_{\bG}:\bG\to \bG\cap\mathbb{G}_m$ the projection onto the second factor,  one has $\ker(\nu_\bG)=\bG^{\der}$ and $\det=(\nu_\bG)^{\otimes n}$ with $\det$ the map $\bG\subset\GL_n\stackrel\det\to\mathbb{G}_m$.
\end{enumerate}
\end{lem}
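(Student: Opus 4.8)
The plan is to exploit irreducibility of the $\bG$-action together with the connectedness of $\bG/Z(\bG)$ to pin down the center, and then to use the multiplicity $n_0$ of the zero weight to force the splitting. For part (i): since $\bG$ acts irreducibly on $F^n$, any element of $Z(\bG)$ is a $\bG$-equivariant endomorphism of $F^n$, hence a scalar by Schur's lemma; thus $Z(\bG)\subseteq\mathbb{G}_m$ and clearly $Z(\bG)=\bG\cap\mathbb{G}_m$, with $\zeta\cdot 1$ acting by $\zeta$. For the claim $\bG=(\bG\cap\mathbb{G}_m)\bG^{\der}$: the quotient $\bG/Z(\bG)$ is connected by hypothesis, so $\bG=Z(\bG)\bG^\circ$; now $\bG^\circ$ is a connected reductive group (it is a closed subgroup of $\GL_n$ that is reductive because $\rho$, i.e. the action on $F^n$, is semisimple — this is where I would invoke that a connected group with a faithful semisimple representation is reductive), so $\bG^\circ=Z(\bG^\circ)^\circ\bG^{\der}$, and since $Z(\bG^\circ)^\circ\subseteq Z(\bG)$ (the connected center of $\bG^\circ$ is centralized by all of $\bG=Z(\bG)\bG^\circ$, hence central in $\bG$), we get $\bG^\circ\subseteq Z(\bG)\bG^{\der}$ and therefore $\bG=(\bG\cap\mathbb{G}_m)\bG^{\der}$. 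Finally, if $Z(\bG)$ is finite, then $Z(\bG^\circ)^\circ$ is a connected group contained in $Z(\bG)$, hence trivial, so $\bG^\circ=\bG^{\der}$ is semisimple.

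For part (ii), assume $n_0>0$. First I would show $\bG^{\der}\cap\mathbb{G}_m=\{1\}$: an element $z=\zeta\cdot 1$ lying in $\bG^{\der}$ acts on $F^n$ by the scalar $\zeta$, but $z\in\bG^{\der}$ means $\zeta$ must occur as $\chi(z)$ for every weight $\chi$ of $\bG^{\der}$ on $F^n$ — in particular for the zero weight, which by $n_0>0$ is present — so $\zeta=\chi_0(z)=1$. (More carefully: the image of $z$ under the formal character of $\bG^{\der}$ must be a single element of the diagonal torus whose every coordinate equals $\zeta$, yet the coordinate corresponding to a zero-weight line is forced to be $1$.) Combined with $\bG=(\bG\cap\mathbb{G}_m)\bG^{\der}$ from (i) and the fact that $\bG^{\der}$ is normal in $\bG$, the trivial intersection gives the direct product decomposition $\bG=\bG^{\der}\times(\bG\cap\mathbb{G}_m)$. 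The projection $\nu_\bG$ onto the second factor then has kernel exactly $\bG^{\der}$ by definition of a direct product. For the determinant identity: on the factor $\bG\cap\mathbb{G}_m=\mathbb{G}_m\cap\bG$, an element $\zeta\cdot 1$ has $\det=\zeta^n=\nu_\bG(\zeta\cdot 1)^n$; on $\bG^{\der}$, the determinant is a character of a (the identity component of a) semisimple group landing in $\mathbb{G}_m$, hence trivial, which matches $\nu_\bG=1$ there; since both sides are homomorphisms $\bG\to\mathbb{G}_m$ agreeing on each direct factor, they agree, i.e. $\det=(\nu_\bG)^{\otimes n}$.

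The main obstacle I anticipate is the reductivity/structure input in part (i) — namely being careful that $\bG^\circ$ is reductive (using semisimplicity of the representation and possibly that $\bG$ need not itself be connected, so one reduces to $\bG^\circ$) and that the connected center of $\bG^\circ$ is actually central in all of $\bG$, which is what lets the scalar torus "absorb" the central part. Everything else is formal manipulation with weights of the formal character and with direct-product decompositions of diagonalizable-by-semisimple groups; the only genuinely delicate point is translating "$z\in\bG^{\der}$ acts as a scalar and the zero weight is present" into "$z=1$", which is where the hypothesis $n_0>0$ does its work and should be spelled out via the formal character rather than hand-waved.
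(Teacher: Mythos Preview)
Your proposal is correct and follows essentially the same line as the paper's proof: Schur's lemma for $Z(\bG)\subset\mathbb{G}_m$, then $\bG=Z(\bG)\bG^\circ$ from connectedness of $\bG/Z(\bG)$, then structure theory of $\bG^\circ$ to get $\bG=(\bG\cap\mathbb{G}_m)\bG^{\der}$, and finally the zero-weight argument for $\bG^{\der}\cap\mathbb{G}_m=\{1\}$ and the resulting direct product. The only slight difference is that the paper observes directly that $\bG^\circ$ acts \emph{irreducibly} on $F^n$ (since $Z(\bG)$ is scalar), which immediately gives both reductivity of $\bG^\circ$ and $Z(\bG^\circ)\subset\mathbb{G}_m$; this is a touch cleaner than your route via ``faithful semisimple $\Rightarrow$ reductive'' plus the separate argument that $Z(\bG^\circ)^\circ\subset Z(\bG)$, and it resolves exactly the obstacle you flagged.
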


\begin{proof}
The first sentence of (i) follows from the irreducibility of the action of $\bG$ on $F^n$. The connectedness of $\bG/Z(\bG)$ implies that $\bG= Z(\bG) \bG^\circ$. It follows that $\bG^\circ$ acts irreducibly and is thus reductive connected. In particular $Z(\bG^\circ)$ lies in $\mathbb{G}_m$ 
and one has $Z(\bG^\circ)\bG^{\der}=\bG^\circ$. Therefore, if $\bG\cap \mathbb{G}_m$ is finite, so must be $\bG^\circ\cap \mathbb{G}_m=Z(\bG^\circ)$ and then $\bG^\circ$ is semisimple and $\bG=(\bG^\circ\cap \mathbb{G}_m) \bG^{\der}$. Otherwise, $\bG$ contains $\mathbb{G}_m$ and 
we have $\bG=\bG^\circ=\mathbb{G}_m \bG^{\der}$.

For the proof of (ii) assume $n_0>0$. Since the intersection $\bG^{\der}\cap\mathbb{G}_m$ cannot contain any non-trivial scalar matrices, it follows that $\bG^{\der}\cap\mathbb{G}_m=\{1\}$. By (i) the center $Z(\bG)$ together with $\bG^{\der}$ generate $\bG$, and they are both normal subgroups of $\bG$. As we have shown, they intersect trivially, and now part (ii) follows from elementary group theory. The last assertions, on $\nu_{\bG}$, are then straightforward.
\end{proof}

\begin{lem}\label{lemtwist}
Let $\rho\colon \Gal_K\to \GL_n(F)$ and $\chi:\Gal_K\to \GL_1(F)$ be semisimple  $\ell$-adic representations.
Denote by $\bG_\rho$ and $\bG_{\rho\otimes\chi}$ respectively the algebraic monodromy groups of $\rho$ 
and $\rho\otimes\chi$. Then the following assertions hold.
\begin{enumerate}[(i)]
\item The quotient $\bG_\rho/Z(\bG_\rho)$ is connected if and only if 
$\bG_{\rho\otimes\chi}/Z(\bG_{\rho\otimes\chi})$ is connected.
\item The multiplicities of
the weight zero in the formal characters of $\bG_\rho^{\der}$ and $\bG_{\rho\otimes\chi}^{\der}$ on $F^n$ are equal.
\end{enumerate}
\end{lem}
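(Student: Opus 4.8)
The plan is to realize $\rho$ and $\rho\otimes\chi$ as two different projections of one auxiliary representation and to observe that these two projections differ only by a twist in a central $\GL_1$-factor, which is invisible both to the quotient $\bG\mapsto\bG/Z(\bG)$ and to the passage $\bG\mapsto\bG^{\der}$. Concretely, let $\bH\subset\GL_{n,F}\times\GL_{1,F}$ be the Zariski closure of the image of $(\rho,\chi)\colon\Gal_K\to\GL_n(F)\times\GL_1(F)$, and let $\pr_1,m\colon\GL_{n,F}\times\GL_{1,F}\to\GL_{n,F}$ be the homomorphisms of algebraic groups given by $\pr_1(g,t)=g$ and $m(g,t)=tg$; the second is a homomorphism because $t$ acts as a scalar. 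Since $\rho=\pr_1\circ(\rho,\chi)$ and $\rho\otimes\chi=m\circ(\rho,\chi)$, and since the image of a homomorphism of algebraic groups is a closed subgroup, we get $\bG_\rho=\pr_1(\bH)$ and $\bG_{\rho\otimes\chi}=m(\bH)$, realized by surjections $\pr_1|_\bH\colon\bH\twoheadrightarrow\bG_\rho$ and $m|_\bH\colon\bH\twoheadrightarrow\bG_{\rho\otimes\chi}$ whose kernels $N_1,N_2$ are central in $\bH$ (they lie, respectively, in $\{1\}\times\GL_{1,F}$ and in the antidiagonal $\{(z\cdot\mathrm{Id}_n,z^{-1}):z\in\GL_{1,F}\}$, both central in $\GL_{n,F}\times\GL_{1,F}$). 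The single structural fact I will keep using is that, $\GL_{1,F}$ being commutative, every commutator in $\GL_{n,F}\times\GL_{1,F}$ lies in $\GL_{n,F}\times\{1\}$, and on that subgroup $\pr_1$ and $m$ restrict to one and the same isomorphism $\GL_{n,F}\times\{1\}\xrightarrow{\sim}\GL_{n,F}$.

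For part (ii), I would show that $\bG_\rho^{\der}$ and $\bG_{\rho\otimes\chi}^{\der}$ are literally the same subgroup of $\GL_{n,F}$. By the commutativity remark, $\bH^{\der}=[\bH^\circ,\bH^\circ]$ is contained in $\GL_{n,F}\times\{1\}$. Moreover, $\pr_1|_{\bH^\circ}\colon\bH^\circ\twoheadrightarrow\bG_\rho^\circ$ and $m|_{\bH^\circ}\colon\bH^\circ\twoheadrightarrow\bG_{\rho\otimes\chi}^\circ$ are surjections of connected algebraic groups, hence send derived group onto derived group: $\pr_1(\bH^{\der})=\bG_\rho^{\der}$ and $m(\bH^{\der})=\bG_{\rho\otimes\chi}^{\der}$. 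Since $\bH^{\der}$ lies in $\GL_{n,F}\times\{1\}$ where $\pr_1$ and $m$ agree, these images coincide, so $\bG_\rho^{\der}=\bG_{\rho\otimes\chi}^{\der}$ inside $\GL_{n,F}$. Two equal reductive subgroups of $\GL_{n,F}$ have the same maximal tori in $\GL_{n,F}$, hence the same formal character on $F^n$, and in particular the same multiplicity of the weight $0$.

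For part (i), I would use the elementary fact that for a central quotient $\pi\colon\bH\to\bH/N$ one has $(\bH/N)/Z(\bH/N)\cong\bH/\widetilde Z_N$, where $\widetilde Z_N=\pi^{-1}\big(Z(\bH/N)\big)=\{h\in\bH:[h,h']\in N\text{ for all }h'\in\bH\}$. Applying this with $N=N_1$ (so $\bH/N_1\cong\bG_\rho$) and with $N=N_2$ (so $\bH/N_2\cong\bG_{\rho\otimes\chi}$), it suffices to check $\widetilde Z_{N_1}=\widetilde Z_{N_2}$. For $h=(g,t)$ and $h'=(g',t')$ in $\bH$ one has $[h,h']=([g,g'],1)\in\GL_{n,F}\times\{1\}$ by commutativity; and since $N_1\cap(\GL_{n,F}\times\{1\})=N_2\cap(\GL_{n,F}\times\{1\})=\{1\}$, the condition $[h,h']\in N_i$ is equivalent to $[h,h']=1$ for $i=1$ and for $i=2$ alike. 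Therefore $\widetilde Z_{N_1}=\widetilde Z_{N_2}=:\widetilde Z$, whence $\bG_\rho/Z(\bG_\rho)\cong\bH/\widetilde Z\cong\bG_{\rho\otimes\chi}/Z(\bG_{\rho\otimes\chi})$; in particular one is connected if and only if the other is.

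I do not expect a genuine obstacle here: every step rests on standard facts about linear algebraic groups — images of homomorphisms are closed subgroups, identity components map onto identity components, surjections of connected groups map derived groups onto derived groups, and the center of a central quotient is read off from commutators. The only point needing care is the uniform description of the two kernels and the equivalence "$[h,h']\in N_i\Leftrightarrow[h,h']=1$", which is exactly where commutativity of the $\GL_1$-factor is used. (One could instead deduce each "if and only if" from a single implication by also applying it to the pair $(\rho\otimes\chi,\chi^{-1})$, but going through $\bH$ delivers both equivalences — and the sharper identity $\bG_\rho^{\der}=\bG_{\rho\otimes\chi}^{\der}$ — in one stroke.)
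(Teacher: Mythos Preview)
Your proof is correct and reaches exactly the two conclusions the paper records: the isomorphism $\bG_\rho/Z(\bG_\rho)\cong\bG_{\rho\otimes\chi}/Z(\bG_{\rho\otimes\chi})$ and the identification of the actions of $\bG_\rho^{\der}$ and $\bG_{\rho\otimes\chi}^{\der}$ on $F^n$. The paper's proof simply asserts these two facts in one sentence without justification; your auxiliary-group argument via $\bH\subset\GL_{n,F}\times\GL_{1,F}$ is a clean way to actually establish them, and in fact yields the sharper statement $\bG_\rho^{\der}=\bG_{\rho\otimes\chi}^{\der}$ as literal subgroups of $\GL_{n,F}$ (not merely an isomorphism of representations), which is more than the paper spells out.
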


\begin{proof}
The lemma follows from the facts that $\bG_\rho/Z(\bG_\rho)\simeq \bG_{\rho\otimes\chi}/Z(\bG_{\rho\otimes\chi})$
and the actions of $\bG_\rho^{\der}$ and $\bG_{\rho\otimes\chi}^{\der}$ on $F^n$ can be identified.
\end{proof}

To proceed, we recall the following result of Steinberg,  e.g.~\cite[Thm.~11.4.18]{Goodman-Wallach}, that implicitly made its appearance already in the proof of Proposition~\ref{den1} when analyzing the set $\bY_\ell$. If $\bG$ is connected reductive group over $F$, then the conjugation-invariant
\begin{equation}\label{eq:steinberg}
\bG^\regss:=\{g\in \bG\mid g\hbox{ is regular semisimple}\}\hbox{ is a dense open subset of }\bG.
\end{equation}
For a subset $T\subset \Sigma_K$, we write $\delta(T)$ for its (Dirichlet) density if it exists.

\begin{lem}\label{lem:WeakDiv}
Let $\rho\colon \Gal_K\to \GL_n(F)$ and $\chi:\Gal_K\to \GL_1(F)$ be $\ell$-adic representations that are both unramified outside a finite subset $S\subset \Sigma_K$. Denote by $\bG_\rho\subset\GL_{n,F}$  the algebraic monodromy group  of $\rho$ and by $n_0$ the multiplicity of the weight zero in the formal character of 
$\bG_\rho^{\der}$ on $F^n$. Suppose that $\rho$ is irreducible and $\bG_\rho/Z(\bG_\rho)$ is connected. If $n_0>0$, define the character $\nu_{\bG_\rho}:\Gal_K\to (\bG_\rho\cap \mathbb{G}_m)(F^\times)\subset F^\times$.
Then the following assertions hold.
\begin{enumerate}
\item[(i)]  If $n_0=0$, then $\delta(S_{\chi|\rho})=0$.
\item[(ii)] If $n_0>0$ and the order of $\det(\rho\otimes\chi^{-1})$ is infinite, then 
$\mathbb{G}_m\subset \bG_{\rho\otimes\chi^{-1}}$ and $\delta(S_{\chi|\rho})=0$.
\item[(iii)] If $n_0>0$ and the order of $\det(\rho\otimes\chi^{-1})$ is finite, then 
$Z(\bG_{\rho\otimes\chi^{-1}})=\bG_{\rho\otimes\chi^{-1}}\cap\mathbb{G}_m$ is finite, 
the symmetric difference of $S_{\chi|\rho}$ and 
$S_{\UOne|\nu_{\bG_{\rho\otimes\chi^{-1}}}}$ is of density zero and 
 \[\delta(S_{\chi|\rho})=\delta(S_{\UOne|\nu_{\bG_{\rho\otimes\chi^{-1}}}})
=\frac1{|Z(\bG_{\rho\otimes\chi^{-1}})|}.\]
\item[(iv)] The character $\chi$ weakly divides $\rho$ if and only if $n_0>0$ 
and $\chi=\nu_{\bG_\rho}$. 
\item[(v)] If $\chi$ weakly divides $\rho$, then $\delta(S_{\chi^{\oplus n_0}|\rho})=1$ and $\delta(S_{\chi^{\oplus (n_0+1)}|\rho})=0$. 
\end{enumerate}
\end{lem}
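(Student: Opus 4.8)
The plan is to prove the five assertions of Lemma~\ref{lem:WeakDiv} by first reducing to a statement about the single character $\nu := \nu_{\bG_{\rho\otimes\chi^{-1}}}$ (when it is defined) via the twisting Lemma~\ref{lemtwist}, then using the Steinberg density fact \eqref{eq:steinberg} together with Chebotarev to translate the condition ``$\chi(\Frob_v)$ is a root of $\det(\rho(\Frob_v)-T)$'' into a statement about which weights of the formal character of $\bG_\rho^{\der}$ can be hit. The organizing observation is that for $v\notin S$ with $\rho(\Frob_v)$ regular semisimple, the eigenvalues of $\rho(\Frob_v)$ are exactly the values of the weights of the formal character evaluated at (a conjugate of) the semisimple element $\rho(\Frob_v)$ lying in the fixed torus $\bT$, and $\chi(\Frob_v)$ is among them iff it equals one of these weight-values; the weight zero of $\bG_\rho^{\der}$ corresponds precisely to the character $\nu_{\bG_\rho}$ by Lemma~\ref{lem:OnNuG}(ii).

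\medskip

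\noindent\textbf{Step 1 (reduce to the trivial character).} I would replace $\rho$ by $\rho' := \rho\otimes\chi^{-1}$, so that $\chi$ weakly divides $\rho$ iff $\UOne$ weakly divides $\rho'$, and $S_{\chi\mid\rho}=S_{\UOne\mid\rho'}$ as subsets of $\Sigma_K\setminus S$. By Lemma~\ref{lemtwist}, $\bG_{\rho'}/Z(\bG_{\rho'})$ is connected and the multiplicity $n_0$ of the weight zero is unchanged; and $\rho'$ is irreducible since $\rho$ is. So it suffices to analyze when $\UOne$ weakly divides an irreducible $\rho'$ with $\bG_{\rho'}/Z(\bG_{\rho'})$ connected, i.e.\ for how many $v$ the value $1$ is an eigenvalue of $\rho'(\Frob_v)$. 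Note $\det(\rho'(\Frob_v))=\det(\rho(\Frob_v))\chi(\Frob_v)^{-n}$, so the order of $\det\rho'=\det(\rho\otimes\chi^{-1})$ governs whether $\bG_{\rho'}$ contains $\mathbb{G}_m$ (via Lemma~\ref{lem:OnNuG}(ii), $\det=\nu_{\bG_{\rho'}}^{\otimes n}$ on $\bG_{\rho'}$, so $\det\rho'$ has finite order iff $\nu_{\bG_{\rho'}}$ does iff $\bG_{\rho'}\cap\mathbb{G}_m$ is finite, using again Lemma~\ref{lem:OnNuG}(i)).

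\medskip

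\noindent\textbf{Step 2 (the density computations (i)--(iii)).} Let $\bH := \bG_{\rho'}^\circ$, a connected reductive group acting irreducibly on $F^n$ (this uses $\bG_{\rho'}=Z(\bG_{\rho'})\bG_{\rho'}^\circ$ from Lemma~\ref{lem:OnNuG}(i); one must check the relevant Frobenius classes land in a fixed coset, which is handled by passing to a finite extension $K'/K$ with $\rho'(\Gal_{K'})$ Zariski-dense in $\bH$ and noting densities scale by $[K':K]^{-1}$ — I would phrase this carefully). On the open dense set $\bH^{\regss}\cap(\text{the relevant coset})$, $1$ is an eigenvalue of $g$ iff $g$ lies in the closed subvariety where some weight of the formal character of $\bH^{\der}$ on $F^n$, composed with the appropriate power of $\nu$, vanishes at $g$. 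If $n_0=0$, no weight of $\bH^{\der}$ is zero: when $\bG_{\rho'}\cap\mathbb{G}_m$ is infinite this closed subvariety is proper in $\bH$ (the weight functions are nonconstant on the torus, as the central $\mathbb{G}_m$ acts with a nonzero power), so by Chebotarev $\delta(S_{\UOne\mid\rho'})=0$; when $\bG_{\rho'}\cap\mathbb{G}_m$ is finite it requires a small extra argument that a nonzero weight of the semisimple group $\bH^{\der}$ still gives a proper subvariety. This proves (i) after untwisting. For (ii), $n_0>0$ but $\det\rho'$ has infinite order forces $\nu_{\bG_{\rho'}}$ infinite, hence $\mathbb{G}_m\subset\bG_{\rho'}$ (Lemma~\ref{lem:OnNuG}(i)); the weight-zero eigenvalue of $\rho'(\Frob_v)$ is then $\nu_{\bG_{\rho'}}(\Frob_v)$ which is $1$ only on a proper closed subvariety (as $\nu$ is the nontrivial projection to $\mathbb{G}_m$), so again $\delta=0$. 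For (iii), $\det\rho'$ finite order makes $Z(\bG_{\rho'})=\bG_{\rho'}\cap\mathbb{G}_m$ finite, $\bH=\bH^{\der}$ semisimple, and $\rho'(\Frob_v)$ has eigenvalue $1$ with ``expected'' multiplicity $n_0$ precisely when $\nu_{\bG_{\rho'}}(\Frob_v)=1$, i.e.\ when $\Frob_v$ maps into $\ker(\nu_{\bG_{\rho'}})=\bG_{\rho'}^{\der}$; this is a Chebotarev computation giving density $1/|\bG_{\rho'}/\bG_{\rho'}^{\der}| = 1/|Z(\bG_{\rho'})|$, and the symmetric-difference-density-zero claim follows because off the regular-semisimple locus (density zero) ``$\nu=1$'' and ``$1$ is an eigenvalue'' agree.

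\medskip

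\noindent\textbf{Step 3 (assertions (iv) and (v)).} Assertion (iv) is now immediate: $\chi$ weakly divides $\rho$ iff $\delta(S_{\chi\mid\rho})=1$, and by (i)--(iii) the only way to get density $1$ is case (iii) with $|Z(\bG_{\rho'})|=1$, i.e.\ $\bG_{\rho'}$ connected semisimple and $\nu_{\bG_{\rho'}}=\UOne$, which by definition of $\nu_{\bG_{\rho'}}$ and untwisting says exactly $\chi=\nu_{\bG_\rho}$ (and $n_0>0$). Conversely if $\chi=\nu_{\bG_\rho}$ and $n_0>0$ then $\rho'=\rho\otimes\chi^{-1}$ has $\nu_{\bG_{\rho'}}$ trivial and $\bG_{\rho'}$ semisimple, so by (iii) $\delta(S_{\chi\mid\rho})=1$. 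For (v), when $\chi$ weakly divides $\rho$ we are in the situation $\bG_{\rho'}$ semisimple and the multiplicity of the eigenvalue $1$ of $\rho'(\Frob_v)$ equals the multiplicity $n_0$ of the weight zero of $\bG_{\rho'}^{\der}$ for all regular semisimple $\Frob_v$, hence for a density-one set; this gives $\chi^{\oplus n_0}$ weakly dividing $\rho$ and, since the eigenvalue $1$ occurs with multiplicity \emph{exactly} $n_0$ on the dense open regular semisimple locus, $\chi^{\oplus(n_0+1)}$ weakly divides $\rho$ only on a set of density zero.

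\medskip

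\noindent\textbf{The main obstacle} I anticipate is the bookkeeping around connectedness: the hypothesis is only $\bG_\rho/Z(\bG_\rho)$ connected, not $\bG_\rho$ connected, so in Step~2 one must carefully control which coset of $\bG_\rho^\circ$ the Frobenius classes land in and make sure that ``the relevant weight function vanishes on a proper subvariety of that coset'' — together with verifying that passing to the finite extension $K'$ (over which $\rho'$ has connected, Zariski-dense image) correctly scales Dirichlet densities and does not change the qualitative conclusions. The other delicate point is the case $n_0=0$ with $\bG_{\rho'}$ semisimple in part~(i): one must argue that a \emph{nonzero} weight of a semisimple group still cuts out a proper subvariety of the regular semisimple locus, which is true but needs the irreducibility of the action (a zero-dimensional weight space would contradict $n_0=0$ only if the weight is actually zero, so this is really just unwinding definitions, but it should be stated).
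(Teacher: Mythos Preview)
Your plan is correct and follows essentially the same route as the paper: twist to reduce to $\chi=\UOne$, then use Steinberg's regular-semisimple density together with Chebotarev to analyze the locus $\{g\in\bG_{\rho'}:1\text{ is an eigenvalue of }g\}$, splitting according to whether $\mathbb{G}_m\subset\bG_{\rho'}$ and whether $n_0>0$.

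The one point where the paper is cleaner is precisely your flagged obstacle. You propose handling disconnectedness by passing to a finite extension $K'/K$ over which the monodromy becomes connected; but density does not simply scale by $[K':K]^{-1}$, and this detour would not directly produce the $1/N$ in (iii) (over $K'$ you would get density $1$, not $1/N$). The paper instead stays over $K$ and, in the finite-center case, uses the coset decomposition $\bG_{\rho'}=\bigsqcup_{\zeta\in\mu_N}\zeta\,\bG_{\rho'}^{\der}$ directly: on each non-identity coset the locus where $1$ is an eigenvalue is nowhere dense (it forces some weight of $\bG_{\rho'}^{\der}$ to take the value $\zeta^{-1}\in\mu_N\setminus\{1\}$, a proper closed condition by Steinberg), while on the identity coset it is everything when $n_0>0$ and proper when $n_0=0$. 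Your actual Chebotarev computation for (iii) on the finite quotient $\bG_{\rho'}/\bG_{\rho'}^{\der}\cong Z(\bG_{\rho'})$ already encodes this and is correct; you should simply drop the $K'$-passage and replace it by this explicit coset argument, which also disposes of your second worry about $n_0=0$ with semisimple $\bG_{\rho'}$ (the identity coset case with all weights nonzero).
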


\begin{proof}
Since the symmetric difference of $S_{\chi|\rho}$ and $S_{\UOne|\rho\otimes\chi^{-1}}$ is finite, 
it follows that
$\delta(S_{\chi|\rho})=\delta(S_{\UOne|\rho\otimes\chi^{-1}})$. As all our assertions concern $\rho\otimes\chi^{-1}$, we may assume $\chi=\UOne$ by Lemma \ref{lemtwist}. Note also that $\bG=\bG_\rho$ satisfies the hypotheses of Lemma~\ref{lem:OnNuG}, so that the lemma applies.

Suppose first that $\bG_\rho$ contains $\mathbb{G}_m$, 
which happens exactly when $\det(\rho)$ has infinite order. 
Then $\bG_\rho$ is connected reductive and has center $\mathbb{G}_m$. 
The  conjugation-invariant subvariety 
$$Y_1:=\{g\in \bG : 1\hbox{ is an eigenvalue of }g\}\subset \bG_\rho$$ 
is closed and no $g\in Y_1$ is regular semisimple because $\mathbb{G}_m$ does not lie in the closure of $g^\Z$. Hence, \eqref{eq:steinberg} implies that $Y_1$ is properly contained in $\bG_\rho$. 
Because $\bG_\rho$ is connected, it follows that $S_1:=\{v\in\Sigma_K\backslash S: \rho(\Frob_v)\in Y_1\}$ has density zero, and hence $\delta(S_{\UOne|\rho})=0$. Thus, we obtain (i) in case $\mathbb{G}_m\subset \bG_\rho$ and also (ii).

Suppose now that $\bG_\rho$ does not contain $\mathbb{G}_m$. 
Then $\bG_\rho^\circ=\bG_\rho^{\der}$ 
and $\bG_\rho\cap\mathbb{G}_m=Z(\bG_\rho)$ is a finite subgroup 
$\mu_N=\{\zeta\in F: \zeta^N=1\}$ of $\mathbb{G}_m$, 
where $N=|Z(\bG_\rho)|\in\N$.
Because $\bG_\rho=Z(\bG_\rho)\bG_\rho^{\der}$, any element in $\bG_\rho$ can be written as $\zeta g$ with $\zeta\in\mu_N$ and $g\in \bG_\rho^{\der}$. Now from \eqref{eq:steinberg} it follows that 
$$\{g\in \bG_\rho^{\der}: \hbox{ some eigenvalue of }g~\hbox{belongs to } \mu_N\backslash\{1\} \}$$ 
is a conjugation-invariant proper closed subvariety of $\bG_\rho^{\der}$, and therefore 
\begin{equation}
Y_2:=\{g\in \bG_\rho\setminus\bG_\rho^\circ: 1\hbox{ is an eigenvalue of }g\}
\end{equation}
is a conjugation-invariant closed 
nowhere dense subvariety of $\bG_\rho\setminus \bG^\circ_\rho$. 
Hence, the Chebotarev density theorem implies that
the set $S_2:=\{v\in\Sigma_K\backslash S: \rho(\Frob_v)\in Y_2\}$ has density zero. 
By the definition of $n_0$, 
the conjugation-invariant
\begin{equation}
Y_3:=\{g\in \bG_\rho^\circ: 1\hbox{ is an eigenvalue of }g\}\subset\bG_\rho^\circ
\end{equation}
is all of $\bG_\rho^\circ$ if $n_0>0$ and is proper closed subvariety if $n_0=0$. 
Hence, the Chebotarev density theorem implies that 
the set $S_3:=\{v\in\Sigma_K\backslash S: \rho(\Frob_v)\in Y_3\}$ has density $1/[\bG_\rho:\bG^\circ_\rho]$ 
if $n_0>0$ and density zero if $n_0=0$. 
In case $n_0=0$, we obtain that $\delta(S_{\UOne|\rho})=\delta(S_2)+\delta(S_3)=0$, and this completes~(i). 
In case $n_0>0$, Lemma~\ref{lem:OnNuG} yields 
$$[\bG_\rho:\bG^0_\rho]=|\mathbb{G}_m\cap\bG_\rho|=|Z(\bG_\rho)|=N.$$ 
By the above results on $\delta(S_2)$ and $\delta(S_3)$, we deduce (iii), 
and in particular
$$\delta(S_{\UOne|\rho})=\delta(S_2)+\delta(S_3)=\frac1N=\delta(S_{\UOne|\nu_{\bG_\rho}}).$$ 

For (iv), it suffices to handle the ``only if'' part. 
If $\UOne$ weakly divides $\rho$, it follows that 
$\delta(S_{\UOne|\rho})=1$ and we are in the situation of assertion (iii).
Hence, we obtain $\delta(S_{\UOne|\nu_{\bG_\rho}})=1$ and thus $\UOne=\nu_{\bG_\rho}$ 
by the Chebotarev density theorem.
The first part of (v) is clear from the definition of $n_0$. To see the second, note that under the hypotheses of (v) the group $\bG_\rho$ is semisimple. By the definition of $n_0$ the conjugation-invariant closed subset 
\begin{equation}
Y_4:=\{g\in \bG_\rho: 1\hbox{ is an eigenvalue of }g\hbox{ of multiplicity at least $n_0+1$} \}
\end{equation}
contains no regular semisimple elements, and by \eqref{eq:steinberg} it is nowhere dense in $\bG_\rho$. 
By the Chebotarev density theorem, we deduce that $\{v\in\Sigma_K\backslash S: \rho(\Frob_v)\in Y_4\}=S_{\UOne^{\oplus(n_0+1)}|\rho}$ has density zero.
\end{proof}

\subsubsection{%\textcolor[rgb]{0,0,1}
{Characterization of weak abelian direct summands and the weak abelian part}}

\begin{prop}\label{lem:WeakDiv2}
Let $\rho\colon \Gal_K\to \GL_n(F)$ be a  semisimple $\ell$-adic representation unramified outside a finite subset $S\subset \Sigma_K$ that can be written as  $\rho=\bigoplus_{j\in J}\rho_j$ with absolutely irreducible representations $\rho_j:\Gal_K\to\GL_{r_j}(F)$ and suppose that for each $j\in J$ the group $\bG_{\rho_j}/Z(\bG_{\rho_j})$ is connected, where $\bG_{\rho_j}$ is the algebraic monodromy group of $\rho_j$. 
Denote by $n_{j,0}$ the multiplicity of the weight zero in the formal character of 
$\bG_{\rho_j}^{\der}$ on $F^{r_j}$, let $J_0:=\{j\in J:n_{j,0}>0\}$ and for $j\in J_0$ define 
$\xi_j:=\nu_{\bG_{\rho_j}}$ the Galois character in Lemma \ref{lem:WeakDiv}. 
Suppose for all $j,j'\in J_0$ that $\xi_{j'}\xi_j^{-1}$ is either trivial or has infinite order. 
Define a partition $J_0=\bigsqcup_{a\in A}J_a$ be requiring $j,j'$ 
to lie in the same class if and only if $\xi_j=\xi_{j'}$, 
and write $\xi_a$ for $\xi_j$ if $j\in J_a$. 

Let also $\psi=\bigoplus_{i\in I}\psi_i^{\oplus e_i}$ be an abelian $\ell$-adic representation with continuous characters $\psi_i:\Gal_K\to F^\times$ unramified outside $S$, that are pairwise distinct and with $e_i\in\N$.
 Then $\psi$ weakly divides $\rho$ if and only if the following conditions hold:
\begin{enumerate}[(a)]
\item for each $i\in I$ there is an (unique) $a_i\in A$ such that $\psi_i=\xi_{a_i}$, and
\item for all $i\in I$ one has $e_i\le \sum_{j\in J_{a_i}} n_{j_i,0}$. 
\end{enumerate}
In particular, there is a unique 
degree $n_0:=\sum_{j\in J} n_{j,0}$ weak abelian direct summand $\rho^{\wab}$ of $\rho$
such that any weak abelian direct summand $\psi$ of $\rho$ is a subrepresentation of $\rho^{\wab}$.
We call $\rho^{\wab}$ the weak abelian part of $\rho$.

%\textcolor[rgb]{0,0,1}
{Moreover if $\psi$ weakly divides $\rho$ and 
$V_\psi$ is the representation space of $\psi$, then 
there is an algebraic representation $r_\psi:\bG_\rho\to \GL_{V_\psi}$ 
such that $\psi$ is the composition 
\begin{equation}\label{algfactor}
\Gal_K\stackrel{\rho}{\rightarrow}\rho(\Gal_K)\hookrightarrow\bG_\rho(F)
\stackrel{r_\psi}{\rightarrow}\GL_{V_\psi}(F)=\GL(V_\psi).
\end{equation}
In particular, $r_\psi(\bG_\rho)=\bG_\psi$ is the algebraic monodromy group of $\psi$.}
\end{prop}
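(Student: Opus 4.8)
The plan is to reduce the statement about $\rho$ to the irreducible pieces $\rho_j$ via Lemma~\ref{lem:WeakDiv}, and then to assemble the multiplicities. First I would establish the easy direction: suppose (a) and (b) hold. For each $i$, since $\psi_i = \xi_{a_i} = \nu_{\bG_{\rho_j}}$ for every $j \in J_{a_i}$, Lemma~\ref{lem:WeakDiv}(v) (applied to each such $\rho_j$) gives that $\psi_i^{\oplus n_{j,0}}$ weakly divides $\rho_j$ on a density-one set; intersecting these density-one sets over the finitely many indices involved and using (b) that $e_i \le \sum_{j \in J_{a_i}} n_{j,0}$, one checks that at almost every $v$ the characteristic polynomial of $\psi(\Frob_v)$ divides $\prod_j \det(\rho_j(\Frob_v) - T)$. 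Here one must be slightly careful: divisibility of polynomials is not additive across the $\rho_j$'s in general, but since all the factors contributed by different $i$'s are powers of the \emph{distinct} characters $\psi_i$, and for a fixed $i$ the factor $(T-\psi_i(\Frob_v))^{e_i}$ is absorbed inside $\prod_{j\in J_{a_i}}\det(\rho_j(\Frob_v)-T)$ by Lemma~\ref{lem:WeakDiv}(v), the divisibility does go through. This gives that $\psi$ weakly divides $\rho$.

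For the converse, suppose $\psi$ weakly divides $\rho$. I would first dispose of the pieces $\rho_j$ with $j \notin J_0$: by Lemma~\ref{lem:WeakDiv}(i), no nonzero character weakly divides such a $\rho_j$, and more precisely the density of places where $1$ is an eigenvalue of $\rho_j(\Frob_v)$ is zero; hence these pieces contribute no weak abelian divisor and can be ignored. For $j \in J_0$, fix a character $\psi_i$ appearing in $\psi$. At a density-one set of $v$, $(T - \psi_i(\Frob_v))$ divides $\det(\rho(\Frob_v)-T) = \prod_j \det(\rho_j(\Frob_v)-T)$, so $\psi_i(\Frob_v)$ is an eigenvalue of some $\rho_j(\Frob_v)$. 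A counting/density argument using Lemma~\ref{lem:WeakDiv}(ii)--(iii) forces this: $\psi_i \xi_j^{-1}$ having infinite order would make the set of $v$ with $\psi_i(\Frob_v)$ an eigenvalue of $\rho_j(\Frob_v)$ of density zero (this is exactly the content of (ii) applied to $\rho_j \otimes (\psi_i^{-1}\xi_j)$, or of the general Steinberg argument), whereas at density one of $v$ the value $\psi_i(\Frob_v)$ must be an eigenvalue of $\rho(\Frob_v)$; so $\psi_i = \xi_j$ for at least one $j \in J_0$, i.e. $\psi_i = \xi_{a_i}$ for a unique $a_i \in A$ (uniqueness because the $\xi_a$ are pairwise distinct by construction of the partition). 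This is (a). For (b), fix $i$ and let $a = a_i$; on a density-one set, $(T - \psi_i(\Frob_v))^{e_i}$ divides $\prod_{j \in J} \det(\rho_j(\Frob_v)-T)$, and the only factors that can contribute the eigenvalue $\psi_i(\Frob_v) = \xi_a(\Frob_v)$ with positive density are the $\rho_j$ with $j \in J_a$ (again by (ii)/(iii) the other pieces contribute it only on a density-zero set); by Lemma~\ref{lem:WeakDiv}(v) each such $\rho_j$ contributes this eigenvalue with multiplicity exactly $n_{j,0}$ at density one and at most $n_{j,0}$ at density one-minus-negligible, so $e_i \le \sum_{j \in J_a} n_{j,0}$ by the Chebotarev density theorem. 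The main obstacle is making this last multiplicity-counting argument clean: one needs that at a density-one set of $v$ the eigenvalue $\xi_a(\Frob_v)$ occurs in $\det(\rho_j(\Frob_v)-T)$ with multiplicity \emph{exactly} $n_{j,0}$ for $j \in J_a$ and with multiplicity $0$ for $j \notin J_a$, which combines Lemma~\ref{lem:WeakDiv}(v), the Steinberg density theorem \eqref{eq:steinberg}, and the hypothesis that $\xi_{j'}\xi_j^{-1}$ is trivial or of infinite order (so that distinct classes genuinely separate).

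The uniqueness and existence of $\rho^{\wab}$ of degree $n_0 = \sum_{j \in J} n_{j,0}$ then follows formally: take $\rho^{\wab} := \bigoplus_{a \in A} \xi_a^{\oplus (\sum_{j \in J_a} n_{j,0})}$; conditions (a) and (b) say precisely that an abelian semisimple $\psi$ weakly divides $\rho$ iff it is a subrepresentation of this $\rho^{\wab}$, and $\sum_{a \in A}\sum_{j\in J_a} n_{j,0} = \sum_{j \in J_0} n_{j,0} = \sum_{j \in J} n_{j,0} = n_0$ since $n_{j,0} = 0$ for $j \notin J_0$. Finally, for the algebraic factorization \eqref{algfactor}: for each $j \in J_a$ with $n_{j,0} > 0$, Lemma~\ref{lem:OnNuG}(ii) gives a decomposition $\bG_{\rho_j} = \bG_{\rho_j}^{\der} \times (\bG_{\rho_j} \cap \G_m)$ with projection $\nu_{\bG_{\rho_j}}$, an algebraic character of $\bG_{\rho_j}$; composing $\rho_j$ with it realizes $\xi_j = \psi_i$ algebraically. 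Since $\bG_\rho \subset \prod_j \bG_{\rho_j}$ surjects onto each $\bG_{\rho_j}$ via projection, precomposing these algebraic characters with the projections $\bG_\rho \to \bG_{\rho_j}$ and taking the direct sum with appropriate multiplicities $e_i$ produces an algebraic representation $r_\psi: \bG_\rho \to \GL_{V_\psi}$ whose composition with $\Gal_K \to \bG_\rho(F)$ is $\psi$; the image $r_\psi(\bG_\rho)$ is the algebraic monodromy group $\bG_\psi$ since $r_\psi$ sends the Zariski-dense subgroup $\rho(\Gal_K)$ onto $\psi(\Gal_K)$ and algebraic morphisms preserve Zariski closures of images.
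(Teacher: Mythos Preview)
Your proposal is correct and follows essentially the same route as the paper's proof: reduce to the irreducible summands $\rho_j$ via Lemma~\ref{lem:WeakDiv}, use parts (i)--(iii) to establish (a) and part (v) together with a multiplicity count (the paper phrases this as a union over tuples $(d_j)_{j\in J_{a_i}}$ with $\sum d_j=e_i$) to establish (b), and then construct $\rho^{\wab}$ and the algebraic factorization $r_\psi$ from Lemma~\ref{lem:OnNuG}(ii). The one spot to tighten is in your derivation of (a): having ruled out infinite order of $\psi_i\xi_a^{-1}$ via Lemma~\ref{lem:WeakDiv}(ii), you still need to exclude finite nontrivial order, which the paper does by invoking Lemma~\ref{lem:WeakDiv}(iii) to get $\delta(S_{\psi_i\mid\bigoplus_{j\in J_a}\rho_j})=\delta(S_{\psi_i\mid\xi_a})=1/\ord(\psi_i\xi_a^{-1})$, and this must equal~$1$.
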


\begin{proof}
Suppose first that (a) and (b) hold. Then from Lemma~\ref{lem:WeakDiv}, it follows that $\psi_i^{\oplus e_i}$ weakly divides $\bigoplus_{j\in J_{a_i}} \rho_{j}$. Summing over all $i$ gives that $\psi$ weakly divides~$\rho$.

Suppose conversely that $\psi$ weakly divides $\rho$. Then in particular for each $i$ we have $\delta(S_{\psi_i|\rho})=1$. If $j\in J\setminus J_0$, then $\delta(S_{\psi_i|\rho_j})$ is zero by Lemma~\ref{lem:WeakDiv}(i), because $n_{j,0}=0$. If $j\in J_0$, then $\xi_j$ is defined. By our hypothesis on the $\xi_j$, the order of $\xi_a\otimes\psi_i^{-1}$ can be finite for at most one $a\in A$. If the order is infinite, by Lemma~\ref{lem:WeakDiv}(ii) we have $\delta(S_{\psi_i|\rho_j})=0$. Therefore there must be (exactly) one $a\in A$ such that $\delta(S_{\psi_i|\bigoplus_{j\in J_a}\rho_j})=1$. From  Lemma~\ref{lem:WeakDiv}(iii) we deduce that the symmetric difference of $S_{\psi_i|\rho_j}$ and $S_{\psi_i|\xi_a}$ is of density zero and that $\delta(S_{\psi_i|\xi_a})=\frac1{\ord(\psi_i\xi_a^{-1})}$ for $j\in J_a$. It follows that 
$$\frac1{\ord(\psi_i\xi_a^{-1})}=\delta(S_{\psi_i|\xi_a})
=\delta(\bigcup_{j\in J_a}S_{\psi_i|\rho_j})=\delta(S_{\psi_i|\bigoplus_{j\in J_a}\rho_j})=1$$
and we must have $\psi_i=\xi_{a}$ for an (unique) $a\in A$, which we now denote by $a_i$. 

It remains to prove (b). From the above we have $\delta(S_{\psi_i|\bigoplus_{j\in J_{a_i}}\rho_j})=1$. We know that
\[  S_{\psi_i^{\oplus e_i}|\bigoplus_{j\in J_{a_i}}\rho_j} = \bigcup_{(d_j)_{j\in J_{a_i}}} \bigcap_{j\in J_{a_i}} S_{\psi_i^{\oplus d_j}|\rho_j}  \]
where the union is over all $J_{a_i}$-indexed tuples $(d_j)$ of elements in $\Z_{\geq0}$ such that $\sum_{j\in J_{a_i}} d_j=e_i$. If $d_j>n_{j,0}$, then $\delta(S_{\psi_i^{\oplus d_j}|\rho_j}  )=0$ by  Lemma~\ref{lem:WeakDiv}(v); otherwise the density is one. Hence there must be a tuple $(d_j)$ such that $\delta(S_{\psi_i^{\oplus d_j}|\rho_j}  )=1$ for all $j\in J_{a_i}$. But then $d_j\le n_{j,0}$, and so 
\begin{equation}\label{wabcond}
e_i=\sum_{j\in J_{a_i}} d_j\le \sum_{j\in J_{a_i}}n_{j,0},
\end{equation} as was to be shown. 
When the equality of \eqref{wabcond} holds for all $i$, one obtains the weak abelian part $\rho^{\wab}=\psi$ of $\rho$ with the desired properties.

%\textcolor[rgb]{0,0,1}
{Finally note that if $\psi$ weakly divides $\rho$,
we obtain $r_\psi$ and \eqref{algfactor}
by condition (a) and (the direct product in) Lemma \ref{lem:OnNuG}(ii). Since $\rho(\Gal_K)$
is Zariski dense in $\bG_\rho$, \eqref{algfactor} implies that 
$\psi(\Gal_K)$ is Zariski dense in $r_\psi(\bG_\rho)$ and thus
$\bG_\psi=r_\psi(\bG_\rho)$.}
\end{proof}

\begin{cor}\label{cor:WeakDiv2}
Let $\rho\colon \Gal_K\to \GL_n(F)$ and $\psi:\Gal_K\to\GL_m(F)$ be semisimple $\ell$-adic representations unramified outside a finite subset $S\subset \Sigma_K$, and suppose that $G_\rho$ is connected. Then the following assertions hold.
\begin{enumerate}[(i)]
\item The hypotheses of Proposition~\ref{lem:WeakDiv2} on $\rho$ are satisfied
under the connectedness of $\bG_\rho$.
\item If $\psi$ is abelian and weakly divides $\rho$, 
then the algebraic monodromy $\bG_\psi$ of $\psi$ is connected.
\item If $\rho$ and $\psi$ are abelian, then $\psi$ weakly divides $\rho$ if and only if $\psi$ is a direct summand of $\rho$.
\end{enumerate}
\end{cor}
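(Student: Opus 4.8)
The plan is to read everything off Proposition~\ref{lem:WeakDiv2}, including its ``moreover'' clause, using repeatedly that the image of a connected algebraic group under a homomorphism of algebraic groups is again a connected closed subgroup. For part (i): since $F$ is algebraically closed and $\rho$ is semisimple, $\rho=\bigoplus_{j\in J}\rho_j$ with each $\rho_j$ absolutely irreducible, and this fixes a block decomposition $F^n=\bigoplus_{j\in J}F^{r_j}$ together with, for each $j$, a projection homomorphism $p_j$ from the block-diagonal subgroup $\prod_{k}\GL_{r_k,F}\subset\GL_{n,F}$ onto $\GL_{r_j,F}$ satisfying $\rho_j=p_j\circ\rho$. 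I would first note that $\bG_{\rho_j}=p_j(\bG_\rho)$: as $\rho(\Gal_K)$ is Zariski dense in $\bG_\rho$ and $p_j$ is a morphism of algebraic groups, $p_j(\bG_\rho)$ is a closed subgroup in which $p_j(\rho(\Gal_K))=\rho_j(\Gal_K)$ is dense. Hence $\bG_{\rho_j}$ is a homomorphic image of the connected group $\bG_\rho$, so it is connected, and a fortiori $\bG_{\rho_j}/Z(\bG_{\rho_j})$ is connected. It remains to verify that $\xi_{j'}\xi_j^{-1}$ is trivial or of infinite order for $j,j'\in J_0$. For this I would write $\xi_j=\eta_j\circ\rho$ with $\eta_j:=\nu_{\bG_{\rho_j}}\circ p_j|_{\bG_\rho}:\bG_\rho\to\mathbb{G}_m$ an algebraic character of $\bG_\rho$, and similarly $\xi_{j'}=\eta_{j'}\circ\rho$, so $\xi_{j'}\xi_j^{-1}=(\eta_{j'}\eta_j^{-1})\circ\rho$. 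If this Galois character had finite order $d$, then $(\eta_{j'}\eta_j^{-1})(\rho(\Gal_K))\subset\mu_d$, and Zariski density gives $(\eta_{j'}\eta_j^{-1})(\bG_\rho)\subset\mu_d$; connectedness of $\bG_\rho$ then forces this image to be a point, necessarily $\{1\}$, i.e., $\xi_{j'}=\xi_j$. Thus all hypotheses of Proposition~\ref{lem:WeakDiv2} hold.

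For part (ii), the ``moreover'' clause of Proposition~\ref{lem:WeakDiv2} (applicable by part (i)) gives an algebraic representation $r_\psi:\bG_\rho\to\GL_{V_\psi}$ with $\bG_\psi=r_\psi(\bG_\rho)$; since $\bG_\rho$ is connected, so is its homomorphic image $\bG_\psi$. For part (iii), one direction is immediate: if $\psi$ is a direct summand of $\rho$ (equivalently, since $\rho$ is semisimple, a subrepresentation), then the characteristic polynomial of $\psi(\Frob_v)$ divides that of $\rho(\Frob_v)$ for almost all $v$, so $\psi$ weakly divides $\rho$, as recorded after Definition~\ref{def:WeakDiv}. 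Conversely, assume $\psi$ weakly divides $\rho$. Being abelian and semisimple over $F$, we may diagonalize $\rho=\bigoplus_{j\in J}\chi_j$ and $\psi=\bigoplus_{i\in I}\psi_i^{\oplus e_i}$ with the $\psi_i$ pairwise distinct characters. Each $\rho_j=\chi_j$ is one-dimensional with $\bG_{\rho_j}^{\der}=\{1\}$, so the weight zero occurs with multiplicity $n_{j,0}=1$ in its formal character; hence $J_0=J$, and Lemma~\ref{lem:OnNuG}(ii) identifies $\xi_j=\nu_{\bG_{\rho_j}}$ with $\chi_j$. Since part (i) lets us apply Proposition~\ref{lem:WeakDiv2}, its conditions (a) and (b) translate to: each $\psi_i$ equals $\chi_j$ for all $j$ in a unique class $J_{a_i}$, and $e_i\le\sum_{j\in J_{a_i}}n_{j,0}=|J_{a_i}|$ — that is, $\psi_i$ occurs in $\rho$ with multiplicity at least $e_i$. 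As the $\psi_i$ are distinct, these copies occupy disjoint parts of $\rho$, so $\psi$ is a subrepresentation, hence a direct summand, of $\rho$.

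The step needing the most care is part (i): one must confirm that $\bG_{\rho_j}$ is genuinely a homomorphic image of $\bG_\rho$ — so that both its connectedness and the factorization $\xi_j=\eta_j\circ\rho$ through an algebraic character are available — and that the dichotomy ``finite order $\Rightarrow$ trivial'' for $\xi_{j'}\xi_j^{-1}$ is exactly what connectedness of $\bG_\rho$ buys us. Parts (ii) and (iii) are then formal consequences of Proposition~\ref{lem:WeakDiv2} and Lemma~\ref{lem:OnNuG}.
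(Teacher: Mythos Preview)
Your proof is correct and follows essentially the same approach as the paper. Both arguments hinge on the observation that $\bG_{\rho_j}$ and the monodromy of $\xi_{j'}\xi_j^{-1}$ are images of the connected $\bG_\rho$ under algebraic homomorphisms (the paper phrases this via Lemma~\ref{lem:OnNuG}(ii), you via the explicit projections $p_j$), and both deduce (ii) and (iii) directly from Proposition~\ref{lem:WeakDiv2}; your treatment of (iii) simply unpacks the paper's one-line remark that $\rho=\rho^{\wab}$ in the abelian case.
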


\begin{proof} 
Since $\bG_\rho$ is connected, 
it is obvious that $\bG_{\rho_j}/Z(\bG_{\rho_j})$ is connected for all $j\in J$. 
Suppose $\xi_j$ and $\xi_{j'}$
are two Galois characters in Proposition \ref{lem:WeakDiv2} where $j,j'\in J_0$,
%\textcolor[rgb]{0,0,1}{
and let $\bG_{jj'}\subset\mathbb{G}_m$ be the algebraic monodromy
of $\xi_j \xi_{j'}^{-1}$.
By Lemma \ref{lem:OnNuG}(ii), there is 
an algebraic representation $r_{jj'}:\bG_\rho\to \mathbb{G}_m$
such that $\xi_j \xi_{j'}^{-1}$ is the composition 
\begin{equation}\label{algfactor1}
\Gal_K\stackrel{\rho}{\rightarrow}\rho(\Gal_K)\hookrightarrow\bG_\rho(F)
\stackrel{r_{jj'}}{\rightarrow}\mathbb{G}_m(F)= F^\times.
\end{equation}
Since $\rho(\Gal_K)$
is Zariski dense in $\bG_\rho$, \eqref{algfactor1} implies that 
$\xi_j \xi_{j'}^{-1}(\Gal_K)$ is Zariski dense in $r_{jj'}(\bG_\rho)$,
which is $\bG_{jj'}$. It follows that $\bG_{jj'}$ is also connected and thus, 
$\xi_j \xi_{j'}^{-1}$ is either trivial or infinite. %}
We obtain (i).

By (i) and (the last assertion of)
Proposition \ref{lem:WeakDiv2}, we obtain a surjection $\bG_\rho\to\bG_\psi$ of algebraic groups. 
Since $\bG_\rho$ is connected, $\bG_\psi$ is also connected, which is (ii).
Finally, (iii) follows directly from (i) and Proposition \ref{lem:WeakDiv2} since $\rho$ is 
equal to its weak abelian part $\rho^{\wab}$.
\end{proof}

\subsection{%\textcolor[rgb]{0,0,1}
{Local algebraicity of weak abelian direct summand}}\label{s25}
Let $\rho_\ell:\Gal_K\to\GL_n(\overline\Q_\ell)$ be an $E$-rational 
semisimple $\ell$-adic representation for some number field $E\subset\overline\Q_\ell$
%\textcolor[rgb]{0,0,1}
{and let $\psi_\ell$ be a weak abelian direct summand of $\rho_\ell$}.
Since $\rho_\ell$ is semisimple, the algebraic monodromy group 
$\bG_\ell$ of $\rho_\ell$ is a reductive subgroup of $\GL_{n,\overline\Q_\ell}$.
We are now ready to prove that $\psi_\ell$ is locally algebraic (Theorem \ref{abpart}).\\

\noindent \textbf{Proof of Theorem \ref{abpart}.}
For any finite field extension $L/K$, the restriction $\rho_\ell|_{\Gal_L}$
is also $E$-rational. Since the local algebraicity of $\psi_\ell$
follows from the local algebraicity of $\psi_\ell|_{\Gal_L}$,
we may assume that the algebraic monodromy group $\bG_\ell$ is connected
by replacing $K$ by some finite extension. 
%\textcolor[rgb]{0,0,1}
{Since $\psi_\ell$ weakly divides $\rho_\ell$, 
there exists $N\in\N$ such that $\psi_\ell^N$ is locally algebraic 
by $E$-rationality of $\rho_\ell$ and Theorem \ref{thm:Waldschmidt}. }
Then the  implication (Loc-alg) $\Rightarrow$ ($E$-rat) for semisimple abelian
$\ell$-adic representations implies the existence of a number field $E'\subset\overline\Q_\ell$ 
such that $\psi_\ell^N$ is $E'$-rational.
By Propositions \ref{den1} and \ref{Serre1} below, $\psi_\ell$ is locally algebraic.
\qed

\begin{prop}\label{den1}
Let $\rho_\ell:\Gal_K\to\GL_n(\overline\Q_\ell)$ be a semisimple $E'$-rational $\ell$-adic representation 
unramified outside a finite $S\subset\Sigma_K$
such that the algebraic monodromy group $\bG_\ell$ is connected. 
%\textcolor[rgb]{0,0,1}
{If $\psi_\ell$ is a weak abelian direct summand of $\rho_\ell$ and} there exists
$N\in\N$ such that $\psi_\ell^N$ is $E'$-rational, 
then there is a Dirichlet density one subset $\mathcal L_K\subset\Sigma_K\backslash S$
such that 
\begin{equation}\label{char}
\det(\psi_\ell(\Frob_v)-T\cdot \mathrm{Id})\in  E'[T]\hspace{.2in}\text{for all}~ v\in \mathcal L_K.
\end{equation}
\end{prop}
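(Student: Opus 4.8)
The plan is to combine the structural description of weak abelian direct summands from Proposition~\ref{lem:WeakDiv2} with a Chebotarev argument inside the connected group $\bG_\ell$, the crucial point being that, although we cannot assume $\psi_\ell$ itself to be $E'$-rational but only $\psi_\ell^N$, this ambiguity will be killed after discarding a density-zero locus of ``special'' Frobenius elements.

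Since $\bG_\ell$ is connected, Corollary~\ref{cor:WeakDiv2}(i) shows that $\rho_\ell$ satisfies the hypotheses of Proposition~\ref{lem:WeakDiv2}; applying that proposition to the weak abelian direct summand $\psi_\ell$, I may write $\psi_\ell\cong\bigoplus_{i=1}^s\tau_i^{\oplus e_i}$ with pairwise distinct continuous characters $\tau_i=\xi_{a_i}\colon\Gal_K\to\overline\Q_\ell^\times$, each unramified outside $S$, and with the property — inherited from the standing hypothesis on the $\xi_j\xi_{j'}^{-1}$ together with the fact that the $\xi_a$, $a\in A$, are pairwise distinct — that every ratio $\tau_i\tau_{i'}^{-1}$ with $i\ne i'$ has infinite order. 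By the construction in Proposition~\ref{lem:WeakDiv2} (cf.\ Lemma~\ref{lem:OnNuG}(ii)) each $\tau_i$ factors as $\tau_i=r_i\circ\rho_\ell$ for an algebraic character $r_i\colon\bG_\ell\to\mathbb{G}_m$, and since $\tau_i\tau_{i'}^{-1}$ has infinite order, $r_ir_{i'}^{-1}$ is a non-trivial character of $\bG_\ell$ for $i\ne i'$. I also introduce
\[ Z:=\bigcup_{\zeta\in\mu_N\setminus\{1\}}\{\,g\in\bG_\ell: P_g(T)\text{ and }P_g(\zeta^{-1}T)\text{ have a common root}\,\}, \]
where $P_g(T)\in\overline\Q_\ell[T]$ is the characteristic polynomial of $g$ on $\overline\Q_\ell^{\,n}$: this is a conjugation-invariant Zariski-closed subset of $\bG_\ell$ (cut out, for each $\zeta$, by a resultant in the coefficients of $P_g$), and it is proper because the identity $1\in\bG_\ell$ does not lie in $Z$. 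Since $\bG_\ell$ is connected, a Chebotarev argument as in the proof of Lemma~\ref{lem:WeakDiv} (applied to the $Y_i$ there) shows that the set $\{v\in\Sigma_K\setminus S:\rho_\ell(\Frob_v)\in Z\}$ and, for each $i\ne i'$, the set $\{v\in\Sigma_K\setminus S:(r_ir_{i'}^{-1})(\rho_\ell(\Frob_v))=1\}=\{v:\tau_i(\Frob_v)=\tau_{i'}(\Frob_v)\}$ all have density zero. I then let $\mathcal L_K$ be the complement in $\Sigma_K\setminus S$ of the union of these density-zero sets together with the finite set of places at which $\psi_\ell^N$ is ramified or where $\det(\psi_\ell^N(\Frob_v)-T\cdot\mathrm{Id})\notin E'[T]$; thus $\mathcal L_K$ has density one.

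It remains to verify \eqref{char} for $v\in\mathcal L_K$, i.e.\ that the multiset $M_v:=\{\tau_1(\Frob_v)^{\oplus e_1},\dots,\tau_s(\Frob_v)^{\oplus e_s}\}$ is stable under every $\sigma\in\Gal(\overline{E'}/E')$ — equivalently that $\det(\psi_\ell(\Frob_v)-T\cdot\mathrm{Id})=\prod_i(T-\tau_i(\Frob_v))^{e_i}$ lies in $E'[T]$. For $v\in\mathcal L_K$ the values $\tau_1(\Frob_v),\dots,\tau_s(\Frob_v)$ are pairwise distinct, and then so are $\tau_1(\Frob_v)^N,\dots,\tau_s(\Frob_v)^N$: a coincidence $\tau_i(\Frob_v)^N=\tau_{i'}(\Frob_v)^N$ with $i\ne i'$ would force $\tau_i(\Frob_v)/\tau_{i'}(\Frob_v)\in\mu_N\setminus\{1\}$, and since both $\tau_i(\Frob_v)$ and $\tau_{i'}(\Frob_v)$ are eigenvalues of $\rho_\ell(\Frob_v)$ (Proposition~\ref{rem:WeakDiv}, as $\psi_\ell$ weakly divides $\rho_\ell$), this would put $\rho_\ell(\Frob_v)$ in $Z$. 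Now fix $\sigma\in\Gal(\overline{E'}/E')$. Because $\psi_\ell^N=\bigoplus_i(\tau_i^N)^{\oplus e_i}$ is $E'$-rational and $v$ avoids its bad set, $\sigma$ permutes the roots of $\det(\psi_\ell^N(\Frob_v)-T\cdot\mathrm{Id})$ respecting multiplicities: there is $\pi\in S_s$ with $e_{\pi(i)}=e_i$ and $\sigma(\tau_i(\Frob_v))^N=\tau_{\pi(i)}(\Frob_v)^N$ for all $i$. On the other hand $\sigma(\tau_i(\Frob_v))$ is again an eigenvalue of $\rho_\ell(\Frob_v)$ (as $\rho_\ell$ is $E'$-rational), and so is $\tau_{\pi(i)}(\Frob_v)$; being $N$-th roots of the same element, $\sigma(\tau_i(\Frob_v))=\zeta_i\,\tau_{\pi(i)}(\Frob_v)$ for some $\zeta_i\in\mu_N$. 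If some $\zeta_i\ne1$, then $\rho_\ell(\Frob_v)$ has the two distinct eigenvalues $\tau_{\pi(i)}(\Frob_v)$ and $\zeta_i\tau_{\pi(i)}(\Frob_v)$ with ratio in $\mu_N\setminus\{1\}$, hence $\rho_\ell(\Frob_v)\in Z$, contradicting $v\in\mathcal L_K$. Therefore $\zeta_i=1$ for all $i$, so $\sigma$ sends the $e_i$ copies of $\tau_i(\Frob_v)$ to the $e_{\pi(i)}=e_i$ copies of $\tau_{\pi(i)}(\Frob_v)$; as $\pi$ is a bijection, $\sigma(M_v)=M_v$. Since $\sigma$ was arbitrary, \eqref{char} holds.

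I expect the only genuinely non-formal input to be the use of the density-zero locus $Z$ to upgrade ``$\sigma$ permutes the $N$-th powers $\tau_i(\Frob_v)^N$'' to ``$\sigma$ permutes the $\tau_i(\Frob_v)$'', and the accompanying checks that $Z$ and the kernels $\ker(r_ir_{i'}^{-1})$ are proper closed subvarieties of the connected group $\bG_\ell$ so that the Chebotarev argument of Lemma~\ref{lem:WeakDiv} applies; these are handled by $1\notin Z$ and by the infinite order of $\tau_i\tau_{i'}^{-1}$, respectively. Everything else is bookkeeping with Chebotarev and Proposition~\ref{lem:WeakDiv2}, and Proposition~\ref{den1} then feeds into Proposition~\ref{Serre1} to complete the proof of Theorem~\ref{abpart}.
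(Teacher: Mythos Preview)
Your argument is correct and follows essentially the same strategy as the paper: both proofs excise a proper closed conjugation-invariant ``bad'' locus (where some pair of eigenvalues has ratio in $\mu_N\setminus\{1\}$) from a connected monodromy group, apply Chebotarev to get a density-one set, and then use $E'$-rationality of $\rho_\ell$ and of $\psi_\ell^N$ to conclude. The only cosmetic differences are that the paper places its bad locus $\bY_\ell$ inside the monodromy group $\bH_\ell$ of $\rho_\ell\otimes\psi_\ell^{-1}$ rather than in $\bG_\ell$, and finishes with the one-line observation that $\prod_i(T-\chi_i(\Frob_v))=\gcd\big(\prod_i(T^N-\chi_i(\Frob_v)^N),\,\det(\rho_\ell(\Frob_v)-T\cdot\mathrm{Id})\big)\in E'[T]$, which packages your explicit $\Gal(\overline{E'}/E')$-permutation argument into a single GCD computation (and thereby avoids the auxiliary step of discarding the loci $\{\tau_i(\Frob_v)=\tau_{i'}(\Frob_v)\}$).
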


\begin{proof}
As $\psi_\ell$ is abelian, write
\begin{equation}\label{rhoD}
\psi_\ell=\chi_1\oplus\chi_2\oplus\cdots\oplus\chi_m
\end{equation}
as a direct sum of characters.
Since $\psi_\ell^N$ is $E'$-rational, 
for all $v$ outside some finite $S'\subset\Sigma_K$ the polynomial
\begin{equation}\label{product}
\prod_{i=1}^m(T^N-\chi_i(\Frob_v)^N)\in E'[T].
\end{equation}

%\textcolor[rgb]{0,0,1}
{Let $V_{\rho_\ell}$ and $V_{\psi_\ell^{-1}}$ be the representation spaces of $\rho_\ell$ and $\psi_\ell^{-1}$ respectively. Denote by $i_\ell$ the faithful representation of $\bG_\ell$ on $V_{\rho_\ell}$.
Since $\bG_\ell$ is connected, the hypotheses of Proposition \ref{lem:WeakDiv2} on 
$\rho_\ell$ are satisfied by Corollary \ref{cor:WeakDiv2}(i).
Thus, Proposition \ref{lem:WeakDiv2} asserts that 
the twisted $\ell$-adic representation $\rho_\ell\otimes\psi_\ell^{-1}$
is equal to the composition
$$\Gal_K\stackrel{\rho_\ell}{\rightarrow}\rho_\ell(\Gal_K)\hookrightarrow\bG_\ell(\overline\Q_\ell)
\stackrel{i_\ell\otimes r_{\psi_\ell}^{-1}}{\longrightarrow}\GL_{V_{\rho_\ell}\otimes V_{\psi_\ell^{-1}}}(\overline\Q_\ell)
=\GL(V_{\rho_\ell}\otimes V_{\psi_\ell^{-1}}),$$
where $r_{\psi_\ell}$ is  as in \eqref{algfactor}.
Hence, the algebraic monodromy group 
$\bH_\ell$ of $\rho_\ell\otimes\psi_\ell^{-1}$ is also connected reductive.
Let $\mu_N\subset\overline\Q_\ell^\times$ be the
group of $N$th root of unity. }
Define the conjugation-invariant
\begin{equation*}\label{Y}
\bY_\ell:=\{g\in\bH_\ell:~\text{some eigenvalue of } g~\text{belongs to~}\mu_N\backslash\{1\}\}.
\end{equation*}
Then $\bY_\ell$ is a proper closed subvariety of $\bH_\ell$ as it does not contain the identity.
Pick a large enough intermediate local field $\Q_\ell\subset L_\ell\subset \overline\Q_\ell$ such that
$\rho_\ell\otimes\psi_\ell^{-1}(\Gal_K)\subset\GL_{mn}(L_\ell)$ 
(so that $\bH_\ell$ defined over $L_\ell$)
and $\bY_\ell$ is defined over $L_\ell$.
The (algebraic) Chebotarev density theorem (\cite{Se81a},\cite[Theorem 3]{Raj98})
implies that 
\begin{equation}\label{avoidY}
\mathcal L_K:=\{v\in\Sigma_K\backslash S':~\rho_\ell\otimes\psi_\ell^{-1}(\Frob_v)\in 
\bH_\ell(L_\ell)\backslash \bY_\ell(L_\ell) \}
\end{equation}
is of natural (hence Dirichlet) density one. 
Therefore, for any $v\in \mathcal L_K$ the polynomial
$$\prod_{i=1}^m(T-\chi_i(\Frob_v))\in E'[T]$$ 
because it is  the greatest common divisor of \eqref{product}
and $\det(\rho_\ell(\Frob_v)-T\cdot\text{Id})$ in  $E'[T]$. We are done.
\end{proof}

Below is a slight modification of \cite[Chap. III.3.2, Proposition]{Se98}.

\begin{prop}[Serre]\label{Serre1}
Suppose $\phi_\ell:\Gal_K^{\mathrm{ab}}\to \GL_n(\overline\Q_\ell)$ 
is an abelian semisimple $\ell$-adic representation
unramified outside a finite $S\subset\Sigma_K$
such that $\phi_\ell^N$ is locally algebraic for some $N\in\N$.
If there is a number field $E'\subset\overline\Q_\ell$
and a Dirichlet density one subset $\mathcal L_K\subset \Sigma_K\backslash S$ such that 
\begin{equation}\label{trace}
\det(\phi_\ell(\Frob_v)-T\cdot \mathrm{Id})\in  E'[T]\hspace{.2in}\text{for all}~ v\in \mathcal L_K,
\end{equation}
then $\phi_\ell$ is locally algebraic and $E'$-rational.
\end{prop}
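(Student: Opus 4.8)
The plan is to follow the proof of \cite[Chap.~III.3.2, Proposition]{Se98} essentially verbatim; the only modification is that Serre works with $E'$-rationality in the usual sense (characteristic polynomials of almost all $\Frob_v$ in $E'[T]$), whereas we are given only the Dirichlet density one statement \eqref{trace}, and the point is to check that this suffices. First I would write $\phi_\ell=\bigoplus_{i}\chi_i$ as a direct sum of continuous characters $\chi_i\colon\Gal_K^{\ab}\to\overline\Q_\ell^\times$. Since $\phi_\ell$ is abelian and semisimple, its local algebraicity is equivalent to that of each $\chi_i$: by the Remark in $\mathsection$\ref{s23} it is detected on the $\phi_\ell|_{\Gal_{K_v}}$ for $v\in S_\ell$, and $\phi_\ell|_{\Gal_{K_v}}=\bigoplus_i\chi_i|_{\Gal_{K_v}}$ is Hodge--Tate if and only if each summand is. So it suffices to prove that each $\chi_i$ is locally algebraic.

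Since $\phi_\ell^N$ is locally algebraic, so is $\chi_i^N$, and by the equivalence (Loc-alg)~$\Leftrightarrow$~($E$-SCS) of $\mathsection$\ref{s24}, $\chi_i^N$ belongs to a semisimple abelian Serre compatible system over some number field $E_0$. In particular $\chi_i(\Frob_v)^N$ is an algebraic number whose degree over $\Q$ is bounded independently of $v$, with archimedean absolute values of the controlled ``Weil'' shape determined by the Hodge--Tate weights of that system. On the other hand, \eqref{trace} says that for every $v\in\mathcal L_K$ the number $\chi_i(\Frob_v)$ is a root of the degree-$n$ polynomial $\det(\phi_\ell(\Frob_v)-T\cdot \mathrm{Id})\in E'[T]$, so $[\Q(\chi_i(\Frob_v))\colon\Q]\le n\,[E'\colon\Q]$ for all $v\in\mathcal L_K$.

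The heart of the proof is now Serre's argument. By the Remark in $\mathsection$\ref{s23}, $\chi_i$ is locally algebraic if and only if $\chi_i|_{\Gal_{K_v}}$ is Hodge--Tate for every $v\in S_\ell$. Because $\chi_i^N$ is locally algebraic, $\chi_i^N|_{\Gal_{K_v}}$ is Hodge--Tate with integer Hodge--Tate weights $(m_{\sigma,i})_\sigma$ indexed by the embeddings $\sigma\colon K_v\hookrightarrow\overline\Q_\ell$; the Sen weights of $\chi_i|_{\Gal_{K_v}}$ are then $(m_{\sigma,i}/N)_\sigma$, and an elementary twisting argument shows that $\chi_i|_{\Gal_{K_v}}$ is Hodge--Tate if and only if $N\mid m_{\sigma,i}$ for all $\sigma$. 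So everything reduces to proving $N\mid m_{\sigma,i}$ for every $v\in S_\ell$, every $\sigma$ and every $i$. This is precisely Serre's step, carried out through his number-theoretic analysis of such characters (in his terminology, type $(A_0)$ / generalized Weil numbers): were some $m_{\sigma,i}$ not divisible by $N$, then for all $v$ in an infinite set of places the algebraic number $\chi_i(\Frob_v)$ would be a non-extractable $N$-th root of $\chi_i(\Frob_v)^N$, forcing $[\Q(\chi_i(\Frob_v))\colon\Q]\to\infty$ and contradicting the uniform bound of the previous paragraph. The sole change from Serre is that his cofinite set of places is replaced by the density one set $\mathcal L_K$; this is immaterial, since $\mathcal L_K$ meets every Frobenius conjugacy class in every finite quotient of $\Gal_K$ by the Chebotarev density theorem, and hence contains the infinitely many places the argument needs. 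Therefore $N\mid m_{\sigma,i}$ throughout, each $\chi_i|_{\Gal_{K_v}}$ is Hodge--Tate, and $\phi_\ell$ is locally algebraic. Reproducing this number-theoretic step of Serre, and verifying that a density one set of places may replace his cofinite one everywhere, is the main obstacle; the remaining steps are formal.

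Finally, by local algebraicity and $\mathsection$\ref{s24}, $\phi_\ell$ belongs to a semisimple abelian Serre compatible system over some number field, so $\det(\phi_\ell(\Frob_v)-T\cdot \mathrm{Id})$ lies in a fixed number field for all $v$ outside a finite set. Combining this with \eqref{trace} via the Chebotarev density bookkeeping used in Proposition~\ref{rem:WeakDiv} --- if $\phi_\ell$ were not $E'$-rational, a non-trivial Galois conjugate of one of its constituent characters would produce a character of finite order $>1$ or of infinite order, and \eqref{trace} would then fail on a positive-density set of places --- one concludes that $\phi_\ell$ is $E'$-rational, as claimed.
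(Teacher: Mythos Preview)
Your treatment of local algebraicity is essentially the paper's: both of you defer to \cite[Chap.~III.3.2]{Se98}, and your extra detail (reduction to characters, the divisibility of the Sen weights, the observation that a density one set of places suffices in Serre's degree-bound argument) is a faithful unpacking of that step. No issue there.

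For the $E'$-rationality, however, your route differs from the paper's and the argument you sketch has a gap. The paper, once local algebraicity is in hand, uses the factorization $\phi_\ell\colon\Gal_K^{\ab}\xrightarrow{\epsilon_\ell}\bS_{\mathfrak m}(\overline\Q_\ell)\xrightarrow{\phi_0}\GL_n(\overline\Q_\ell)$ through the Serre group; since $\mathcal L_K$ has density one, Chebotarev gives that $\{\phi_\ell(\Frob_v):v\in\mathcal L_K\}$ is dense in $\phi_\ell(\Gal_K)$, and then \cite[Chap.~II.2.4, Proposition~2 and Lemma]{Se98} forces $\phi_0$ to be defined over $E'$, whence $E'$-rationality. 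This is short and structural.

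Your contrapositive (``a non-trivial Galois conjugate of one of the constituent characters would force \eqref{trace} to fail on a positive-density set'') is not clearly justified. The condition that the multiset $\{\chi_i(\Frob_v)\}_i$ be $\Gal(\overline\Q/E')$-stable does \emph{not} reduce to the equality of the multisets of characters $\{\chi_i\}$ and $\{\sigma\chi_i\}$: even if $\sigma\chi_i\neq\chi_j$ for all $j$, the ratios $\sigma\chi_i\cdot\chi_j^{-1}$ may all have finite order, and then the locus where some matching $\sigma\chi_i(\Frob_v)=\chi_{\pi(i)}(\Frob_v)$ exists is a finite union of closed, possibly positive-density, sets. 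Proposition~\ref{rem:WeakDiv} does not handle this combinatorics. So either you must supply the missing analysis (which essentially re-derives the paper's conclusion via the Serre group anyway), or simply adopt the paper's cleaner argument: factor through $\bS_{\mathfrak m}$ and invoke \cite[II.2.4]{Se98} directly.
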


\begin{proof}
By the same lines as \cite[Chap. III.3.2, Proposition]{Se98}, one obtains local algebraicity of $\phi_\ell$.
Hence, there exists a modulus $\mathfrak m$ of $K$ such that $\phi_\ell$ factors through some $\overline\Q_\ell$-representation of Serre group $\bS_\mathfrak{m}$ (see $\mathsection$\ref{s24}):
$$\phi_\ell:\Gal_K^{\mathrm{ab}}\stackrel{\epsilon_\ell}{\longrightarrow}\bS_\mathfrak{m}(\overline\Q_\ell)
\stackrel{\phi_0}{\longrightarrow}\GL_n(\overline\Q_\ell).$$
Since the trace $\mathrm{Tr}(\phi_\ell(\Frob_v))\in E'$ for all $v\in\mathcal L_K$ by \eqref{trace} and the subset
$$\{\phi_\ell(\Frob_v):~v\in\mathcal L_K\}\subset \phi_\ell(\Gal_K)$$
is dense by $\mathcal L_K$ of Dirichlet density one, the morphism $\phi_0$ is defined over $E'$ \cite[Chap. II.2.4, Proposition 2 and Lemma]{Se98}. Therefore, $\phi_\ell$ is $E'$-rational.
\end{proof}

\begin{cor}\label{potential}
Let $\rho_\ell:\Gal_K\to\GL_n(\overline\Q_\ell)$ be a semisimple $E'$-rational $\ell$-adic representation 
unramified outside a finite $S\subset\Sigma_K$
such that the algebraic monodromy group $\bG_\ell$ is connected. 
%\textcolor[rgb]{0,0,1}
{If $\psi_\ell$ is a weak abelian direct summand of $\rho_\ell$ and there exists a finite field extension $L/K$ such that $\psi_\ell|_{\Gal_L}$ is $E'$-rational,
then $\psi_\ell$ is $E'$-rational.}
\end{cor}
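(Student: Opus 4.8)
The plan is to combine Propositions~\ref{den1} and~\ref{Serre1} with Theorem~\ref{abpart}. The only input not already at hand is the hypothesis of Proposition~\ref{den1} that $\psi_\ell^N$ be $E'$-rational for some $N\in\N$; so the bulk of the argument is to extract this from the assumption that $\psi_\ell|_{\Gal_L}$ is $E'$-rational. There is no serious obstacle here: the one step that is not purely formal is precisely this passage from $E'$-rationality of the restriction $\psi_\ell|_{\Gal_L}$ to $E'$-rationality of the power $\psi_\ell^N$, together with some bookkeeping of finite exceptional sets.

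First I would replace $L$ by its Galois closure over $K$, which preserves the hypothesis, since $E'$-rationality is inherited by restriction along a further finite extension (for places $w'\mid w$ one has $\psi_\ell(\Frob_{w'})=\psi_\ell(\Frob_w)^{f(w'/w)}$, and the characteristic polynomial of a power $A^k$ of a matrix has $E'$-coefficients whenever that of $A$ does, by Galois invariance of the multiset of eigenvalues). So assume $L/K$ is Galois and put $N:=[L:K]$. Let $S''\subset\Sigma_K$ be the finite set consisting of $S$, the places of $K$ ramifying in $L$, the places above $\ell$, and the places of $K$ lying below the finite exceptional set of $\psi_\ell|_{\Gal_L}$. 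For $v\notin S''$ choose a place $w\mid v$ of $L$ and let $f=f(w/v)$ be its residue degree, which equals the order of $\Frob_v$ in $\Gal(L/K)$ and hence divides $N$. From $\psi_\ell|_{\Gal_L}(\Frob_w)=\psi_\ell(\Frob_v)^f$ and $E'$-rationality of $\psi_\ell|_{\Gal_L}$ we obtain that the characteristic polynomial of $\psi_\ell(\Frob_v)^f$, and therefore also that of $\psi_\ell(\Frob_v)^N=(\psi_\ell(\Frob_v)^f)^{N/f}$, lies in $E'[T]$. Since $\psi_\ell^N$ is unramified wherever $\psi_\ell$ is, this shows $\psi_\ell^N$ is $E'$-rational.

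Now Proposition~\ref{den1}, applied to the semisimple $E'$-rational $\rho_\ell$ with connected $\bG_\ell$, to its weak abelian direct summand $\psi_\ell$, and to the integer $N$ just produced, yields a Dirichlet density one subset $\mathcal L_K\subset\Sigma_K\setminus S$ with $\det(\psi_\ell(\Frob_v)-T\cdot\mathrm{Id})\in E'[T]$ for all $v\in\mathcal L_K$. On the other hand, $\psi_\ell$ is locally algebraic by Theorem~\ref{abpart}. Feeding both facts into Proposition~\ref{Serre1} with $\phi_\ell=\psi_\ell$ (and $N=1$ there) then shows that $\psi_\ell$ is $E'$-rational, which is the assertion. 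The remaining care is only in making sure the various finite exceptional sets are tracked so that the density one conclusion of Proposition~\ref{den1} really matches the hypothesis of Proposition~\ref{Serre1}.
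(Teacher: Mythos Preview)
Your argument is correct and follows the same route as the paper: verify the hypothesis of Proposition~\ref{den1} (that some power $\psi_\ell^N$ is $E'$-rational), then feed the density-one conclusion into Proposition~\ref{Serre1}. The paper's proof is the one-liner ``Apply Proposition~\ref{den1} (by taking $N=[L:K]!$) and Proposition~\ref{Serre1}'', so the only cosmetic differences are that (i) you first pass to the Galois closure and then take $N=[L:K]$, whereas the paper avoids the Galois-closure step by simply taking $N=[L:K]!$ so that every residue degree $f(w/v)\le[L:K]$ divides $N$; and (ii) you invoke Theorem~\ref{abpart} to get local algebraicity of $\psi_\ell$ itself (so $N=1$ in Proposition~\ref{Serre1}), while the paper can just reuse the same $N$, since $\psi_\ell^N$ $E'$-rational already implies $\psi_\ell^N$ locally algebraic by~$\S$\ref{s24}. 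Neither point affects correctness.
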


\begin{proof}
Apply Proposition \ref{den1} (by taking $N=[L:K]!$) and Proposition \ref{Serre1}. 
\end{proof}

\subsection{$E$-rationality of abelian part}\label{s26}
Given a semisimple $E$-rational $\ell$-adic representation 
\begin{equation*}
\rho_\ell:\Gal_K\to\GL_n(\overline\Q_\ell),
\end{equation*}
%\textcolor[rgb]{0,0,1}
{the \emph{abelian part} of $\rho_\ell$, denoted by $\rho_\ell^{\ab}$,
is the maximal abelian subrepresentation of $\rho_\ell$.
Suppose $\rho_\ell^{\ab}$ is non-zero.
Since $\rho_\ell^{\ab}$ weakly divides $\rho_\ell$, it is locally algebraic by Theorem \ref{abpart}.
Is it also $E$-rational?}
We conjecture the following and give some evidence 
(Theorem \ref{E-rat-evi}).

\begin{conj}\label{E-rat-conj}
If the algebraic monodromy group $\bG_\ell$ of $\rho_\ell$ is connected and
the abelian part $\rho_\ell^{\ab}$ is non-zero,
then $\rho_\ell^{\ab}$ is $E$-rational.
\end{conj}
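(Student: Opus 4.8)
The plan is to reduce the conjecture, along the lines of the proof of Proposition~\ref{Serre1}, to a statement about Frobenius \emph{traces}, to settle the ``local'' half of that statement unconditionally by Galois conjugation of $\rho_\ell$, and then to isolate the remaining ``arithmetic'' descent as the real difficulty. First I would note a harmless reduction: by Corollary~\ref{potential} the property of being $E$-rational is unaffected by replacing $K$ by a finite extension, so one is free to pass to such an extension whenever convenient (the connectedness of $\bG_\ell$ is preserved). Since $\rho_\ell^{\ab}$ is a direct summand of the semisimple $\rho_\ell$ it weakly divides $\rho_\ell$, hence is locally algebraic by Theorem~\ref{abpart}; by Serre's theory it therefore factors as $\rho_\ell^{\ab}=\phi_0\circ\epsilon_\ell$ through the Serre group $\bS_\mathfrak{m}$, with $\phi_0$ an algebraic representation defined over some number field. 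By \cite[Chap.~II.2.4]{Se98} together with the density of $\{\epsilon_\ell(\Frob_v)\}$ in $\rho_\ell^{\ab}(\Gal_K)$ (exactly as in the proof of Proposition~\ref{Serre1}), the morphism $\phi_0$ — hence $\rho_\ell^{\ab}$ — is defined over $E$ as soon as $\mathrm{Tr}(\rho_\ell^{\ab}(\Frob_v))\in E$ for $v$ in a Dirichlet density one subset of $\Sigma_K$. So it suffices to prove this trace statement.

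For the unconditional half, let $\lambda$ be the place of $E$ induced by the embedding $E\subset\overline\Q_\ell$ and let $\sigma$ be any automorphism of $\overline\Q_\ell$ fixing $E_\lambda$ pointwise. Such a $\sigma$ is continuous, since $\overline\Q_\ell$ carries a unique absolute value extending the one on $\Q_\ell$; hence $\rho_\ell^\sigma$ is again a continuous semisimple $\ell$-adic representation, and it has the same Frobenius characteristic polynomials as $\rho_\ell$, because those lie in $E[T]$ for almost all $v$ and $\sigma$ fixes $E$. By the Brauer--Nesbitt theorem and the Chebotarev density theorem, $\rho_\ell^\sigma\cong\rho_\ell$; consequently $\sigma$ permutes the isomorphism classes of the Jordan--H\"older constituents of $\rho_\ell$, in particular the one-dimensional ones, so $(\rho_\ell^{\ab})^\sigma\cong\rho_\ell^{\ab}$. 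It follows that each $\mathrm{Tr}(\rho_\ell^{\ab}(\Frob_v))$ is $\Gal(\overline\Q_\ell/E_\lambda)$-invariant, hence lies in $E_\lambda$; and since $\rho_\ell^{\ab}$ is locally algebraic its Frobenius eigenvalues are algebraic numbers, so in fact $\mathrm{Tr}(\rho_\ell^{\ab}(\Frob_v))\in E_\lambda\cap\overline\Q$ for almost all $v$, i.e. $\phi_0$ is already defined over $E_\lambda\cap\overline\Q$.

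The remaining, and I expect the hard, step is the descent from the large algebraic extension $E_\lambda\cap\overline\Q$ down to $E$ itself — equivalently, forcing the \emph{non-continuous} automorphisms of $\overline\Q_\ell/E$ to act trivially on the $\bS_\mathfrak{m}$-datum of $\rho_\ell^{\ab}$. The source of this difficulty is transparent through Lemma~\ref{lem:OnNuG}(ii): if $\rho_\ell=\bigoplus_j\rho_j$ with $\rho_j$ absolutely irreducible, the characters $\xi_j=\nu_{\bG_{\rho_j}}$ that carve out $\rho_\ell^{\ab}$ satisfy $\xi_j^{r_j}=\det\rho_j$ with $r_j=\dim\rho_j$, and although $\det\rho_\ell=\bigotimes_j\det\rho_j$ is $E$-rational, neither the individual $\det\rho_j$ nor their $r_j$-th roots need be, so the passage to the $\xi_j$ is only controlled up to a finite extension of $E$; transcendence input of Waldschmidt type (Theorem~\ref{thm:Waldschmidt}) does not obviously help, as it concerns algebraicity rather than the field of rationality of eigenvalues. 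The natural way to close the gap — and, I expect, the substance of Theorem~\ref{E-rat-evi} — is to import $\ell$-independence: if $\rho_\ell$ is a member of a connected semisimple Serre compatible system $\{\rho_\mu\}_{\mu\in\Sigma_E}$ over $E$, run the argument of the second paragraph at every place $\mu$ to see that each $\rho_\mu^{\ab}$ is $E_\mu$-rational, and then use $\ell$-independence of the associated $\bS_\mathfrak{m}$-data — as furnished for the weak abelian parts by Theorem~\ref{thm:WeakDiv} (which contain the abelian parts by Proposition~\ref{lem:WeakDiv2}), and conjecturally for the abelian parts themselves, cf.~\cite{Hu20} — to conclude that $\phi_0$ is defined over $\bigcap_\mu(E_\mu\cap\overline\Q)=E$. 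The obstacle to the unconditional conjecture is precisely that an arbitrary $E$-rational $\rho_\ell$ with connected $\bG_\ell$ is not known to sit in such a system, so one would need either an unconditional $\ell$-independence input for its abelian part or a genuinely new argument replacing the multi-$\ell$ comparison.
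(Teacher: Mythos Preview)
The statement you are addressing is labeled a \emph{Conjecture} in the paper, and the paper does not claim to prove it. What the paper does prove are partial results: Theorem~\ref{E-rat-evi} establishes the conclusion under the extra hypothesis that the formal character of $\bG_\ell^{\der}$ on $V_\ell^{\text c}$ has no zero weight, and Corollary~\ref{conj3} deduces the $n=3$ case from this. So there is no ``paper's own proof'' to match; your text is best read as a proof \emph{strategy}, and indeed you yourself flag the unresolved step.

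Your second paragraph is correct and yields something the paper does not state explicitly: the characteristic polynomials of $\rho_\ell^{\ab}(\Frob_v)$ lie in $(E_\lambda\cap\overline\Q)[T]$. But, as you note, $E_\lambda\cap\overline\Q$ is an infinite algebraic extension of $E$, so this does not by itself give $E$-rationality; and your proposed descent via $\bigcap_\mu(E_\mu\cap\overline\Q)=E$ presupposes that the $\rho_\mu^{\ab}$ have the \emph{same} Frobenius eigenvalues as algebraic numbers --- precisely the compatibility of $\{\rho_\mu^{\ab}\}$ that is open (cf.~\cite{Hu20}) and that you correctly call out as the obstacle. By contrast, the paper's Theorem~\ref{E-rat-evi} does not attempt a descent of this kind at all: it works at a single $\lambda$, builds the $E$-rational restriction-of-scalars representation $\phi_\lambda$ from the $E'$-model of $\rho_\ell^{\ab}$, and then uses the ``no zero weight'' hypothesis to force any $\Gal(E'/E)$-conjugate character appearing in $\phi_\lambda$ but not in $\rho_\ell^{\ab}$ to produce an impossible eigenvalue $1$ in the twisted non-abelian part. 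That argument genuinely uses the extra hypothesis and would not go through without it; your approach trades that hypothesis for an (unproved) $\ell$-independence input. Neither route currently settles the conjecture in general.
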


%\textcolor[rgb]{0,0,1}
{Recall that the derived group of the identity component $\bG_\ell^\circ$  is denoted 
by $\bG_\ell^{\der}$};  the group $\bG_\ell^{\der}$ is semismiple.
Decompose the representation $\rho_\ell=\rho_\ell^{\text{c}}\oplus\rho_\ell^{\ab}$
and let $V_\ell^{\text{c}}$ (resp. $V_\ell^{\ab}$) be the representation space of $\rho_\ell^{\text{c}}$ (resp. $\rho_\ell^{\ab}$). Since $\bG_\ell^{\der}$ acts trivially on $V_\ell^{\ab}$,
identify $\bG_\ell^{\der}$ as a subgroup of $\GL_{V_\ell^{\text{c}}}$.

\begin{thm}\label{E-rat-evi}
Let $\rho_\ell$ be a semisimple $E$-rational $\ell$-adic representation such that 
$V_\ell^{\text{c}}$ and $V_\ell^{\ab}$ are both non-zero. 
If $\bG_\ell$ is connected and the formal character of $\bG_\ell^{\der}$ on $V_\ell^{\text{c}}$
does not have  zero as a weight, then $\rho_\ell^{\ab}$ is $E$-rational.
\end{thm}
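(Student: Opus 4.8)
The plan is to derive this from the local algebraicity of $\rho_\ell^{\ab}$ (Theorem~\ref{abpart}), the classification of weak abelian direct summands (Proposition~\ref{lem:WeakDiv2}), and a Galois‑conjugation argument for which the hypothesis on $V_\ell^{\text{c}}$ is exactly what is needed. First I would record that the hypothesis forces $\rho_\ell^{\wab}=\rho_\ell^{\ab}$. Write $\rho_\ell=\bigoplus_{j\in J}\rho_j$ with $\rho_j$ absolutely irreducible, so that $V_\ell^{\ab}$ is the sum of the one‑dimensional $\rho_j$ and $V_\ell^{\text{c}}$ the sum of the remaining ones. Since $\bG_\ell$ is connected, Corollary~\ref{cor:WeakDiv2}(i) shows Proposition~\ref{lem:WeakDiv2} applies. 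If $\dim\rho_j=1$ then $n_{j,0}=1$; if $\dim\rho_j\ge 2$ then $V_{\rho_j}\subset V_\ell^{\text{c}}$ and $\bG_{\rho_j}^{\der}$ is the image of $\bG_\ell^{\der}$ in $\GL_{V_{\rho_j}}$, so a zero weight of a maximal torus of $\bG_{\rho_j}^{\der}$ on $V_{\rho_j}$ would pull back (along the surjection of maximal tori) to a zero weight of a maximal torus of $\bG_\ell^{\der}$ on $V_\ell^{\text{c}}$, which is excluded; hence $n_{j,0}=0$. Therefore $\deg\rho_\ell^{\wab}=\sum_{j\in J}n_{j,0}=\#\{j:\dim\rho_j=1\}=\dim V_\ell^{\ab}$. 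As $\rho_\ell^{\ab}$ is a subrepresentation of $\rho_\ell$ it is a weak abelian direct summand, of degree $\dim V_\ell^{\ab}=\deg\rho_\ell^{\wab}$; being semisimple it must equal $\rho_\ell^{\wab}$ by Proposition~\ref{lem:WeakDiv2}. Consequently every weak abelian direct summand of $\rho_\ell$ is isomorphic to a subrepresentation of $\rho_\ell^{\ab}$.

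Next I would set up Galois conjugates of $\rho_\ell^{\ab}$. By Theorem~\ref{abpart}, $\rho_\ell^{\ab}$ is locally algebraic, hence by $\mathsection$\ref{s24} it is $E''$‑rational for some number field $E''$, and after enlarging $E''$ I may assume $E\subseteq E''$ with $E''/E$ finite Galois. Put $P_v(T):=\det(\rho_\ell^{\ab}(\Frob_v)-T\cdot\text{Id})\in E''[T]$, valid for all but finitely many $v$. For $\tau\in\Gal(E''/E)$ I would produce the Galois conjugate $\rho_\ell^{\ab,\tau}$: writing $\rho_\ell^{\ab}$ through the Serre group as $\phi_0\circ\epsilon_\ell$ with $\phi_0$ defined over $E''$, and conjugating $\phi_0$ by $\tau$ (using that $\bS_\mathfrak{m}$ and the points $\epsilon_\ell(\Frob_v)$ are rational), one obtains from Serre's theory \cite[Chap.~II--III]{Se98} a locally algebraic abelian $\ell$‑adic representation $\rho_\ell^{\ab,\tau}$ of $K$, unramified outside the same finite set, with $\det(\rho_\ell^{\ab,\tau}(\Frob_v)-T\cdot\text{Id})=\tau(P_v(T))$ for almost all $v$.

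Finally I would close the argument. Since $\rho_\ell$ is $E$‑rational, $P_v^{\mathrm{tot}}(T):=\det(\rho_\ell(\Frob_v)-T\cdot\text{Id})$ lies in $E[T]$ for almost all $v$, so $\tau$ fixes it; and since $\rho_\ell^{\ab}$ is a direct summand of $\rho_\ell$ we have $P_v(T)\mid P_v^{\mathrm{tot}}(T)$, whence $\tau(P_v(T))\mid P_v^{\mathrm{tot}}(T)$ for almost all $v$. Thus $\rho_\ell^{\ab,\tau}$ is a weak abelian direct summand of $\rho_\ell$, so by the first paragraph it is isomorphic to a subrepresentation of $\rho_\ell^{\ab}$; having the same degree, $\rho_\ell^{\ab,\tau}\cong\rho_\ell^{\ab}$, and comparing characteristic polynomials of Frobenius gives $\tau(P_v(T))=P_v(T)$ for almost all $v$. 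Letting $\tau$ range over the finite group $\Gal(E''/E)$, I conclude $P_v(T)\in E[T]$ for almost all $v$, that is, $\rho_\ell^{\ab}$ is $E$‑rational.

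I expect the only genuinely non‑formal point to be the construction in the second paragraph, namely producing the conjugates $\rho_\ell^{\ab,\tau}$ as honest $\ell$‑adic representations with the prescribed Frobenius characteristic polynomials; this is precisely where local algebraicity of $\rho_\ell^{\ab}$ (Theorem~\ref{abpart}) and Serre's structure theory for abelian locally algebraic representations are indispensable, and where the argument would break down for a merely $E$‑rational abelian representation. The degree count in the first paragraph — the only place the hypothesis on $V_\ell^{\text{c}}$ enters — and the divisibility manipulation in the last are routine given Proposition~\ref{lem:WeakDiv2} and Corollary~\ref{cor:WeakDiv2}.
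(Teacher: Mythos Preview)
Your argument is correct, and it is genuinely different from the paper's proof, though both rest on the same two pillars: local algebraicity of $\rho_\ell^{\ab}$ (Theorem~\ref{abpart}) to produce Galois conjugates via the Serre group, and the no-zero-weight hypothesis on $V_\ell^{\text c}$ to force those conjugates back into $\rho_\ell^{\ab}$.

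The paper proceeds by contradiction and induction on $m=\dim V_\ell^{\ab}$. It forms the restriction-of-scalars representation $\phi_\lambda$ (your $\bigoplus_\tau\rho_\ell^{\ab,\tau}$ in disguise), passes to a finite extension $L/K$ where the monodromy of $\rho_\ell\oplus\phi_\lambda$ is connected, and uses Corollary~\ref{potential} to reduce to $E$-rationality over $L$. If $\rho_\ell^{\ab}|_{\Gal_L}$ were not $E$-rational, Proposition~\ref{Serre1} yields a character $\tau$ of $\phi_\lambda|_{\Gal_L}$ distinct from all the $\chi_i|_{\Gal_L}$; one then argues that $\tau(\Frob_w)$ must be an eigenvalue of $\rho_\ell^{\text c}|_{\Gal_L}(\Frob_w)$ on a density-one set, and derives a contradiction directly from the no-zero-weight hypothesis by looking at the proper closed subvariety $\{g:1\text{ is an eigenvalue}\}$ of the monodromy of $\rho_\ell^{\text c}\otimes\tau^{-1}$.

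Your route is more structural: you first observe that the no-zero-weight hypothesis forces $\rho_\ell^{\wab}=\rho_\ell^{\ab}$ (a clean use of Proposition~\ref{lem:WeakDiv2} and Corollary~\ref{cor:WeakDiv2}(i)), and then feed each Galois conjugate $\rho_\ell^{\ab,\tau}$ into the black-box characterization of weak abelian direct summands to conclude $\rho_\ell^{\ab,\tau}\cong\rho_\ell^{\ab}$ without any induction, field extension, or contradiction. This exploits the machinery of \S2.6 more fully and is shorter; the paper's approach, by contrast, is self-contained at the level of characters and proper closed subvarieties, essentially redoing a special case of Lemma~\ref{lem:WeakDiv} by hand. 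The one place to be careful in your write-up is the construction of $\rho_\ell^{\ab,\tau}$: it relies on the Frobenius elements $\epsilon_\ell(\Frob_v)$ lying in $\bS_\mathfrak m(\Q)$ (Serre \cite[Chap.~II, \S2.3]{Se98}), so that conjugating the $E''$-rational morphism $\phi_0$ by $\tau\in\Gal(E''/E)$ really does replace $P_v(T)$ by $\tau(P_v(T))$; you allude to this but it deserves an explicit citation.
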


\begin{proof}
Decompose $\rho_\ell^{\ab}$ as a direct sum of characters:
\begin{equation}\label{rhoD2}
\rho_\ell^{\ab}=\chi_1\oplus\chi_2\oplus\cdots\oplus\chi_m.
\end{equation}
We perform induction on $m\geq 1$.
By Theorem \ref{abpart} and the equivalence in $\mathsection$\ref{s24},
the characters $\chi_1,\chi_2,...,\chi_m$, and 
$\rho_\ell^{\ab}$ are all $E'$-rational for some number field $E'\subset\overline\Q_\ell$, 
which we may assume to be Galois over $E$.
Moreover by \eqref{factor}, $\rho_\ell^{\ab}$ 
is associated to an $E'$-morphism of some Serre group ($\mathfrak m$ a modulus of $K$):
$$\phi_0:  \bS_{\mathfrak{m}}\to \GL_{m,E'}.$$
Let $E_\lambda$ be the closure of $E$ in $\overline\Q_\ell$ (i.e., $\lambda\in\Sigma_E$).
Then $E'\otimes_E E_\lambda=\prod_{\lambda'|\lambda} E'_{\lambda'}$
and by restriction of scalars \cite[Definition 3.4]{BGP19}, we define
\begin{equation}\label{phiE}
\phi_\lambda:\Gal_K^{\ab}\stackrel{\epsilon_\ell}{\rightarrow} \bS_{\mathfrak{m}}(E_\lambda)
\stackrel{\prod_{\lambda'|\lambda}\phi_0\times E'_{\lambda'}}{\longrightarrow}
\GL_n(E'\otimes_E E_\lambda)\subset \GL_{n[E':E]}(E_\lambda)\subset \GL_{n[E':E]}(\overline\Q_\ell),
\end{equation}
which is $E$-rational and contains $\rho_\ell^{\ab}$ as subrepresentation.

Let $L$ be a finite extension of $K$ such that the algebraic monodromy group $\bH_\ell$
of the restriction $(\rho_\ell\oplus \phi_\lambda)|_{\Gal_L}$ is connected. 
Then the Chebotarev density theorem
implies the existence of a density one subset $\mathcal L_L\subset \Sigma_L$ such 
that for any distinct pair of characters $\chi,\chi'$ of $\phi_\lambda|_{\Gal_L}$,
one has 
\begin{equation}\label{distinct}
\chi(\Frob_w)\neq \chi'(\Frob_w)\hspace{.2in}\text{for all}~w\in\mathcal L_L.
\end{equation} 

If $\rho_\ell^{\ab}$ is not $E$-rational, then $\rho_\ell^{\ab}|_{\Gal_L}$ 
is not $E$-rational by Corollary \ref{potential}.
Assume $\rho_\ell^{\ab}|_{\Gal_L}$ does not contain any $E$-rational subrepresentation $\psi_\ell$,
otherwise we can reduce to smaller $m$ (by passing to the quotient mod $\psi_\ell$) and use  the induction hypothesis. 
Suppose 
\begin{equation}\label{kchar}
\chi_1|_{\Gal_L},\chi_2|_{\Gal_L},....,\chi_k|_{\Gal_L}
\end{equation}
is the set of distinct characters in $\rho_\ell^{\ab}|_{\Gal_L}$.
By property \eqref{distinct}, the $k$ numbers 
\begin{equation}\label{knumbers}
\chi_1|_{\Gal_L}(\Frob_w),\chi_2|_{\Gal_L}(\Frob_w),....,\chi_k|_{\Gal_L}(\Frob_w)\in E'
\end{equation}
are distinct for any $w\in\mathcal L_L$.
Since the direct sum of the (locally algebraic) characters in \eqref{kchar} 
is not $E$-rational by assumption,
Proposition \ref{Serre1} implies that  for some $w\in\mathcal L_L$
the distinct numbers in \eqref{knumbers} are not invariant under $\Gal(E'/E)$. 
Hence, the $E$-rational $\phi_\lambda|_{\Gal_L}$ has 
some one-dimensional subrepresentation $\tau$ distinct from \eqref{kchar}.

For almost all $w\in\Sigma_L$, the definition and the $E$-rationality of \eqref{phiE} 
imply that $\tau(\Frob_w)$ must be a $\Gal(E'/E)$-conjugate of 
some eigenvalue of $\rho_\ell^{\ab}|_{\Gal_L}(\Frob_w)$.
As $\tau$ is distinct from \eqref{kchar},
property \eqref{distinct} and $E$-rationality of 
$\rho_\ell|_{\Gal_L}=\rho_\ell^{\text{c}}|_{\Gal_L}\oplus\rho_\ell^{\ab}|_{\Gal_L}$
imply that $\tau(\Frob_w)$ is a root of the characteristic polynomial 
of $\rho_\ell^{\text{c}}|_{\Gal_L}(\Frob_w)$ for all $w\in\mathcal L_L$.

Consider the twisted $\ell$-adic representation $\rho_\ell^{\text{c}}|_{\Gal_L} \otimes \tau^{-1}$
and let $\bG_\tau$ be its algebraic monodromy group.
The faithful representations of (the derived) $\bG_\ell^{\der}$ and $\bG_\tau^{\der}$
on respectively the representation spaces $V_\ell^{\text{c}}$ and $V_\ell^{\text{c}}\otimes\tau^{-1}$
are identified (as twisting by a character will not affect the derived group). 
Thus, the formal character of  $\bG_\tau^{\der}$ 
does not have zero weight (by assumption), 
which implies  that the following conjugation-invariant subvariety of $\bG_\tau$ is proper:
$$\bY_\tau:=\{g\in\bG_\tau:~1~\text{is an eigenvalue of}~g        \}.$$
This is impossible because $\rho_\ell^{\text{c}}|_{\Gal_L} \otimes \tau^{-1} (\Frob_w)\in \bY_\tau$
for all $w\in \mathcal L_L$ and $\mathcal L_L\subset\Sigma_L$ is a density one subset.
\end{proof}

We prove Conjecture \ref{E-rat-conj} when $n=3$.

\begin{cor}\label{conj3}
Let $\rho_\ell$ be a three-dimensional semisimple $E$-rational $\ell$-adic representation 
with non-zero $\rho_\ell^{\ab}$. 
If $\bG_\ell$ is connected, then $\rho_\ell^{\ab}$ is $E$-rational.
\end{cor}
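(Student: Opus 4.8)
The plan is to deduce Corollary~\ref{conj3} directly from Theorem~\ref{E-rat-evi} after a short dimension count. Write $\rho_\ell=\rho_\ell^{\text{c}}\oplus\rho_\ell^{\ab}$ as in $\mathsection$\ref{s26}. Since $\rho_\ell$ is semisimple, the maximal abelian subrepresentation $\rho_\ell^{\ab}$ is precisely the sum of all one-dimensional constituents of $\rho_\ell$, while $\rho_\ell^{\text{c}}$ is the sum of those of dimension at least $2$; hence $d:=\dim V_\ell^{\ab}\in\{1,2,3\}$. If $d=3$, then $\rho_\ell=\rho_\ell^{\ab}$ is $E$-rational by hypothesis and there is nothing to prove. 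If $d=2$, then $\rho_\ell^{\text{c}}$ is one-dimensional, hence abelian, contradicting the maximality of $\rho_\ell^{\ab}$; so this case does not occur. Thus we may assume $d=1$, so that $\rho_\ell=\chi\oplus\sigma$ with $\chi$ a character (equal to $\rho_\ell^{\ab}$) and $\sigma:=\rho_\ell^{\text{c}}$ a two-dimensional representation with no one-dimensional subrepresentation. As $F=\overline\Q_\ell$ is algebraically closed, $\sigma$ is absolutely irreducible, and in particular $V_\ell^{\text{c}}$ and $V_\ell^{\ab}$ are both non-zero.

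It then remains to verify the hypothesis of Theorem~\ref{E-rat-evi}: that the formal character of $\bG_\ell^{\der}$ on $V_\ell^{\text{c}}$ does not contain the zero weight. Recall from the paragraph preceding that theorem that $\bG_\ell^{\der}$ is semisimple and acts trivially on $V_\ell^{\ab}$, so it is viewed as a subgroup of $\GL_{V_\ell^{\text{c}}}=\GL_2$. If $\bG_\ell^{\der}$ acted trivially on $V_\ell^{\text{c}}$, it would act trivially on $V_\ell^{\text{c}}\oplus V_\ell^{\ab}$ and hence be the trivial group, so $\bG_\ell$ would be a torus and $\rho_\ell$ abelian, contradicting $V_\ell^{\text{c}}\neq 0$. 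Hence $\bG_\ell^{\der}$ acts non-trivially on $F^2$; being semisimple it has no non-trivial one-dimensional representation and its representations are completely reducible, so this action is irreducible, its image in $\GL_2$ is $\SL_2$ in its standard representation, and a maximal torus of $\bG_\ell^{\der}$ therefore acts on $V_\ell^{\text{c}}$ with two non-zero opposite weights. In particular zero is not a weight of the formal character, and Theorem~\ref{E-rat-evi} applies to give that $\rho_\ell^{\ab}$ is $E$-rational, as claimed.

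There is essentially no hard step here. The only point deserving a line of justification is that the image of $\bG_\ell^{\der}$ in $\GL_2$ is $\SL_2$ in its standard representation, equivalently that a two-dimensional absolutely irreducible representation has connected semisimple monodromy part $\SL_2$; this follows from the classification of connected semisimple subgroups of $\GL_2$ (or from highest-weight theory for $\SL_2$). Everything else is bookkeeping, and indeed the same scheme would establish Conjecture~\ref{E-rat-conj} in any dimension in which one can check that the formal character of $\bG_\ell^{\der}$ on $V_\ell^{\text{c}}$ omits the zero weight.
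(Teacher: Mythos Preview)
Your proof is correct and follows essentially the same approach as the paper's: reduce to the case where $\rho_\ell^{\ab}$ is one-dimensional, observe that $\bG_\ell^{\der}$ then acts on the two-dimensional $V_\ell^{\text{c}}$ as $\SL_2$ in its standard representation (whose formal character has no zero weight), and invoke Theorem~\ref{E-rat-evi}. The paper's proof is much terser---it simply asserts the reduction and the identification of $\bG_\ell^{\der}$ with $\SL_2$---while you spell out the dimension count and justify why $\bG_\ell^{\der}$ must act irreducibly on $V_\ell^{\text{c}}$; but the content is the same.
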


\begin{proof}
We only need to handle the case that $\rho_\ell^{\ab}$ is one-dimensional.
Then $\bG_\ell^{\der}$ is $\SL_2$ on $V_\ell^{\text{c}}$, which has no zero weight 
in the formal character.
We are done by Theorem \ref{E-rat-evi}.
\end{proof}

\subsection{%\textcolor[rgb]{0,0,1}
{Abelian and weak abelian part of compatible system}}
Let $\{\rho_\lambda:\Gal_K\to\GL_n(\overline E_\lambda)\}$
be a semisimple Serre compatible system of $K$ defined over $E$
unramified outside a finite $S\subset\Sigma_K$.
Let $\bG_\lambda$ be the algebraic monodromy group of $\rho_\lambda$.
The connectedness and formal character of $\bG_\lambda$ 
is independent of $\lambda$;  see \cite{Se81b}.
We say that $\{\rho_\lambda\}$ is \emph{connected}
if $\bG_\lambda$ is connected for some $\lambda$ (hence all $\lambda$).
Define $\{\rho_\lambda^{\ab}\}$ (resp. $\{\rho_\lambda^{\wab}\}$, well-defined 
if $\{\rho_\lambda\}$ is connected by Proposition \ref{lem:WeakDiv2} 
and Corollary \ref{cor:WeakDiv2}(i)) as the \emph{abelian part} 
(resp. \emph{weak abelian part}) of the Serre compatible system $\{\rho_\lambda\}$.
One can ask if $\{\rho_\ell^{\ab}\}$ (resp. $\{\rho_\lambda^{\wab}\}$) is still a Serre compatible system defined over $E$ when it is non-zero.
This question  is studied in \cite{Hu20} (in case $E=\Q$) for the abelian part
by using some $\lambda$-independence results of $\bG_\lambda$, such as the following.

\begin{thm}\label{Hui1}\cite[Theorem 3.19, Remark 3.22]{Hu13}
The formal bi-character of $\bG_\lambda$ is independent of $\lambda$.
In particular, the formal character of $\bG_\lambda$ (resp. $\bG_\lambda^{\der}$) 
is independent of~$\lambda$.
\end{thm}

By Theorem \ref{Hui1},
if $\bG_{\lambda_0}^{\der}=\SL_n$ (resp. $\bG_{\lambda_0}=\GL_n$) for some $\lambda_0$ then 
$\bG_\lambda^{\der}=\SL_n$ (resp. $\bG_{\lambda}=\GL_n$) for all $\lambda$.
Thus, we obtain $\lambda$-independence of $\bG_\lambda^{\der}$ (resp. $\bG_{\lambda}$).
However when $n=3$ and $\bG_{\lambda_0}^{\der}=\SL_2\times\{1\}\subset\GL_3$ for some $\lambda_0$, 
the semisimple
$\bG_\lambda^{\der}$ is either  $\SL_2\times\{1\}$ or $\SO_3$  by 
Theorem \ref{Hui1} and we need extra information to exclude 
the second possibility.

\begin{prop}\label{n3}
Let $\{\rho_\lambda\}$ be a connected semisimple three-dimensional Serre compatible system 
of $K$ defined over $E$. Suppose 
$\rho_{\lambda_0}=\sigma_{\lambda_0}\oplus \tau_{\lambda_0}$
is a decomposition into irreducible factors for some $\lambda_0$ 
such that $\sigma_{\lambda_0}$ is two-dimensional. Then the following assertions hold.
\begin{enumerate}[(i)]
\item $\tau_{\lambda_0}$ satisfies ($E$-SCS), i.e., $\tau_{\lambda_0}$ 
belongs to a SCS $\{\tau_\lambda\}$ defined over $E$.
\item If $\rho_\lambda$ is reducible, then $\bG_\lambda^{\der}=\SL_2\times\{1\}$
and the (unique) one-dimensional subrepresentation is $\tau_\lambda$.
\item If $\rho_\lambda$ is irreducible, then $\bG_\lambda^{\der}=\SO_3$ and $\det(\rho_\lambda)=\tau_\lambda^3$.
\end{enumerate}
\end{prop}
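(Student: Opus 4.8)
The plan is to determine $\bG_\lambda^{\der}$ for every $\lambda$ by combining the $\lambda$-independence of the formal character (Theorem~\ref{Hui1}) with the dichotomy ``$\rho_\lambda$ reducible or irreducible'', and then to identify $\tau_\lambda$ with the weak abelian part of $\rho_\lambda$, which is rigid across the system. First I would analyze $\lambda_0$. Since $\bG_{\lambda_0}$ is connected, $\bG_{\lambda_0}^{\der}$ is a connected semisimple subgroup of $\GL_{3,F}$; as a connected semisimple group has no nontrivial character it acts trivially on the line of $\tau_{\lambda_0}$, and since $\bG_{\lambda_0}=Z(\bG_{\lambda_0})^\circ\bG_{\lambda_0}^{\der}$ with the central torus acting by scalars, it acts faithfully and irreducibly on the plane of $\sigma_{\lambda_0}$. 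Hence $\bG_{\lambda_0}^{\der}=\SL_2\times\{1\}$, and the multiplicity $n_0$ of the weight zero in the formal character of $\bG_{\lambda_0}^{\der}$ on $F^3$ is $1$. By Theorem~\ref{Hui1} the same holds for all $\lambda$: each $\bG_\lambda^{\der}$ is connected semisimple with formal character equal to that of $\SL_2\times\{1\}$ on $F^3$, in particular with $n_0=1$.

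For (i): $\sigma_{\lambda_0}$ is two-dimensional and irreducible, hence non-abelian, so the abelian part of $\rho_{\lambda_0}$ is $\tau_{\lambda_0}\neq0$; since $\rho_{\lambda_0}$ is $E$-rational (it lies in a Serre compatible system over $E$) and $\bG_{\lambda_0}$ is connected, Corollary~\ref{conj3} gives that $\tau_{\lambda_0}$ is $E$-rational, and the equivalence ($E$-rat)$\Leftrightarrow$($E$-SCS) of $\mathsection$\ref{s24} produces the required system $\{\tau_\lambda\}$ over $E$. I would also record that $\tau_{\lambda_0}$, being a subrepresentation of $\rho_{\lambda_0}$, weakly divides $\rho_{\lambda_0}$, so by Proposition~\ref{rem:WeakDiv} the characteristic polynomial of $\tau_{\lambda_0}(\Frob_v)$ divides that of $\rho_{\lambda_0}(\Frob_v)$ for every $v$ outside a finite set; since these polynomials are the $\lambda$-independent data of the compatible systems $\{\tau_\lambda\}$ and $\{\rho_\lambda\}$, the character $\tau_\lambda$ weakly divides $\rho_\lambda$ for every $\lambda$.

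Now the two cases, both using that the hypotheses of Proposition~\ref{lem:WeakDiv2} on $\rho_\lambda$ hold by Corollary~\ref{cor:WeakDiv2}(i). If $\rho_\lambda$ is reducible, then $\bG_\lambda$, and hence $\bG_\lambda^{\der}$ (again because its radical acts by scalars), acts reducibly on $F^3$; as $n_0=1$ excludes a decomposition into three lines, $\bG_\lambda^{\der}$ has a two-dimensional irreducible constituent on which it acts as $\SL_2$ together with a trivial line, i.e.\ $\bG_\lambda^{\der}=\SL_2\times\{1\}$. A centralizer computation then forces $\bG_\lambda$ to be block-diagonal, so $\rho_\lambda=\sigma_\lambda\oplus\tau_\lambda''$ with $\sigma_\lambda$ two-dimensional irreducible and $\tau_\lambda''$ the unique one-dimensional subrepresentation. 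Applying Proposition~\ref{lem:WeakDiv2} to this decomposition --- the $\sigma_\lambda$-constituent contributing weight-zero multiplicity $0$, and the character attached to the $\tau_\lambda''$-constituent being $\tau_\lambda''$ itself --- shows that $\tau_\lambda''$ is the only character weakly dividing $\rho_\lambda$, whence $\tau_\lambda=\tau_\lambda''$ by the previous paragraph; this proves (ii). If instead $\rho_\lambda$ is irreducible, then $\bG_\lambda^{\der}$ acts irreducibly on $F^3$; up to conjugacy the only connected semisimple subgroups of $\GL_3$ acting irreducibly are $\SL_3$ and $\SO_3$, and since the formal character is not that of $\SL_3$ we obtain $\bG_\lambda^{\der}=\SO_3$. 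For the determinant, Lemma~\ref{lem:OnNuG}(ii) applies, as $\rho_\lambda$ is irreducible and $n_0=1>0$, and yields $\det=\nu_{\bG_{\rho_\lambda}}^{\otimes 3}$, hence $\det(\rho_\lambda)=(\nu_{\bG_{\rho_\lambda}}\circ\rho_\lambda)^3$. Finally, by Proposition~\ref{lem:WeakDiv2} the character $\nu_{\bG_{\rho_\lambda}}\circ\rho_\lambda$ equals $\rho_\lambda^{\wab}$ and is the unique character weakly dividing $\rho_\lambda$, so it equals $\tau_\lambda$, giving $\det(\rho_\lambda)=\tau_\lambda^3$ and proving (iii).

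I expect the main difficulty to lie in the group theory of $\bG_\lambda^{\der}$: the formal character on $F^3$ is the same weight multiset $\{1,0,-1\}$ for $\SL_2\times\{1\}$ and for $\SO_3$, so Theorem~\ref{Hui1} alone cannot separate them --- the reducibility or irreducibility of $\rho_\lambda$ is exactly the extra input that does. One must also be careful that the one-dimensional summand appearing in the reducible case and the character $\nu_{\bG_{\rho_\lambda}}$ appearing in the irreducible case are both forced to coincide with the compatible-system character $\tau_\lambda$; this is precisely what the characterization of weak abelian direct summands in Proposition~\ref{lem:WeakDiv2} delivers.
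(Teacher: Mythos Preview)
Your proof is correct, but it follows a genuinely different route from the paper's. The paper, after establishing (i) exactly as you do, argues for the last parts of (ii) and (iii) by passing to the \emph{twisted} compatible system $\{\rho_\lambda\otimes\tau_\lambda^{-1}\}$ with monodromy $\bG_\lambda'$: at $\lambda_0$ one has $\SL_2\times\{1\}\subset\bG_{\lambda_0}'\subset\GL_2\times\{1\}$, so the formal character of the \emph{full} group $\bG_\lambda'$ has a zero weight for every $\lambda$ by Theorem~\ref{Hui1}; this forces $\bG_\lambda'\subset\GL_2\times\{1\}$ in the reducible case (hence the trivial line gives $\tau_\lambda$ as the one-dimensional summand) and $\bG_\lambda'=\SO_3$ in the irreducible case (hence $\det(\rho_\lambda\otimes\tau_\lambda^{-1})=1$). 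You instead transport the single fact that $\tau_\lambda$ weakly divides $\rho_\lambda$ across the system via the $E$-polynomials, and then invoke the uniqueness in Proposition~\ref{lem:WeakDiv2} (together with Lemma~\ref{lem:OnNuG}(ii) for the determinant identity) to pin down $\tau_\lambda$ in each case.

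Both arguments rest on Theorem~\ref{Hui1}, but use it differently: the paper applies it to the formal character of the full twisted monodromy, while you apply it only to $\bG_\lambda^{\der}$ and replace the twisting step by the weak-abelian machinery. The paper's route is shorter and more self-contained; yours has the virtue of exhibiting Proposition~\ref{n3} as a direct application of the paper's main structural tool (the characterization of weak abelian direct summands), and it makes transparent why $\tau_\lambda$ is singled out --- it is the weak abelian part $\rho_\lambda^{\wab}$ in both the reducible and irreducible cases.
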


\begin{proof}
Assertion (i) follows directly from Corollary \ref{conj3} and $\mathsection$\ref{s24}.
Since $\bG_{\lambda_0}^{\der}=\SL_2\times\{1\}$, 
Theorem \ref{Hui1} implies that $\bG_\lambda^{\der}$ is either $\SL_2\times\{1\}$ or $\SO_3$,
depending on the reducibility of $\rho_\lambda$.
Consider the SCS $\{\rho_\lambda\otimes\tau_\lambda^{-1}\}$ and let $\bG_\lambda'$
be the algebraic monodromy group at $\lambda$. 
Since 
$$\SL_2\times\{1\}\subset \bG_{\lambda_0}'\subset \GL_2\times\{1\},$$ 
the formal character of $\bG_\lambda'$ has a zero weight for all $\lambda$ by Theorem \ref{Hui1}.
Hence, either $\SL_2\times\{1\}\subset \bG_{\lambda}'\subset \GL_2\times\{1\}$
or 
$\bG_\lambda'=\SO_3$,
depending on the reducibility of $\rho_\lambda$.
This implies the last parts in assertions (ii) and (iii).
\end{proof}

%The following result is immediate.

\begin{cor}
Let $\{\rho_\lambda\}$ be a connected semisimple three-dimensional Serre compatible system 
of $K$ defined over $E$. If $\rho_\lambda$ is reducible for all $\lambda$,
then $\{\rho_\lambda^{\ab}\}$ is a Serre compatible system defined over $E$.
\end{cor}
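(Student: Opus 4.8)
The plan is to deduce the statement from Proposition~\ref{n3} together with the elementary observation that a three-dimensional semisimple reducible representation has a very constrained abelian part. First I would split into two cases according to whether or not some member $\rho_{\lambda_0}$ of the system has a two-dimensional irreducible constituent.

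If no $\rho_\lambda$ has a two-dimensional irreducible constituent, then each $\rho_\lambda$, being semisimple, three-dimensional and reducible, must be a direct sum of three characters; hence every $\rho_\lambda$ is abelian, $\rho_\lambda^{\ab}=\rho_\lambda$, and $\{\rho_\lambda^{\ab}\}=\{\rho_\lambda\}$ is already a Serre compatible system over $E$. In the remaining case I fix $\lambda_0$ and a decomposition $\rho_{\lambda_0}=\sigma_{\lambda_0}\oplus\tau_{\lambda_0}$ into irreducible factors with $\sigma_{\lambda_0}$ two-dimensional and $\tau_{\lambda_0}$ a character, so that the hypotheses of Proposition~\ref{n3} are met. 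Proposition~\ref{n3}(i) produces a Serre compatible system $\{\tau_\lambda\}$ defined over $E$ having $\tau_{\lambda_0}$ as its $\lambda_0$-th member. Now for an arbitrary $\lambda$ I would use the standing assumption that $\rho_\lambda$ is reducible to invoke Proposition~\ref{n3}(ii): it gives $\bG_\lambda^{\der}=\SL_2\times\{1\}$ and tells us that the unique one-dimensional subrepresentation of $\rho_\lambda$ is $\tau_\lambda$. Because $\bG_\lambda^{\der}=\SL_2\times\{1\}$, the semisimple $\rho_\lambda$ is multiplicity-free with a two-dimensional irreducible constituent $\sigma_\lambda$ and the character $\tau_\lambda$; consequently its only nonzero proper subrepresentations are $\sigma_\lambda$ and $\tau_\lambda$, and the maximal abelian one is $\tau_\lambda$. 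Thus $\rho_\lambda^{\ab}=\tau_\lambda$ for every $\lambda$, and $\{\rho_\lambda^{\ab}\}=\{\tau_\lambda\}$ is a Serre compatible system over $E$, as desired.

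Once Proposition~\ref{n3} is in hand this is essentially bookkeeping, and I do not expect a genuine obstacle. The point that must be handled carefully is that the identification $\rho_\lambda^{\ab}=\tau_\lambda$ has to hold uniformly in $\lambda$; this is exactly where the hypothesis ``$\rho_\lambda$ reducible for all $\lambda$'' is used, since for a $\lambda$ with $\rho_\lambda$ irreducible one would be in the situation of Proposition~\ref{n3}(iii) with $\bG_\lambda^{\der}=\SO_3$ and $\rho_\lambda^{\ab}=0$, and the family $\{\rho_\lambda^{\ab}\}$ would then not consist of representations of a fixed dimension. A secondary point is to check that the system $\{\tau_\lambda\}$ furnished by Proposition~\ref{n3}(i) really has the one-dimensional subrepresentations of the $\rho_\lambda$ as its members, but this is part of the assertion of Proposition~\ref{n3}(ii).
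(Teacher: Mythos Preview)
Your proof is correct and essentially the same as the paper's: both reduce to Proposition~\ref{n3}(i),(ii) once some $\rho_{\lambda_0}$ has a two-dimensional irreducible constituent, and identify $\rho_\lambda^{\ab}$ with $\tau_\lambda$ for every $\lambda$. The only cosmetic difference is in the abelian case: the paper fixes a single $\lambda_0$ and, if $\rho_{\lambda_0}$ is abelian, invokes $\mathsection$\ref{s24} to conclude the whole system is abelian, whereas your complementary case split (no member has a two-dimensional irreducible constituent) makes that step unnecessary.
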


\begin{proof}
Let $\lambda_0$ be a finite place of $E$.
If $\rho_{\lambda_0}$ is abelian, then the whole system is abelian by $\mathsection2.4$
and thus $\{\rho_\lambda^{\ab}\}=\{\rho_\lambda\}$ is a compatible system.
If $\rho_{\lambda_0}$ is non-abelian, then it is the direct sum of 
a two-dimensional irreducible representation
and a character. Since $\rho_\lambda$ is reducible for all $\lambda$, 
the assertion follows from Proposition \ref{n3}(i),(ii).
\end{proof}

Although the compatibility of the abelian part $\{\rho_\lambda^{\ab}\}$ is not known,
this is not the case for the weak abelian part $\{\rho_\lambda^{\wab}\}$ by the following theorem. 
Since the formal character of $\bG_\lambda^{\der}$ is independent of $\lambda$ (Theorem \ref{Hui1}), 
the multiplicity of the weight zero in the formal character is independently of $\lambda$; we denote it by $n_0$.\footnote{The independence of $n_0$ of $\lambda$ can also be deduced from the proof of Theorem~\ref{thm:WeakDiv} by choosing the first $\lambda$ in such a way that its $n_0$ is maximal.}

%\begin{defi}
%Let $\{\rho_\lambda\}$ and $\{\psi_\lambda\}$ be two semisimple $E$-rational Serre compatible systems of $K$ unramified outside finite subsets $S_\rho$ and $S_\psi$ of $\Sigma_K$, with characteristic polynomials of Frobenii $\{P_v(T):~v\in\Sigma_K\backslash S_\rho\}$ and $\{Q_v(T):~v\in\Sigma_K\backslash S_\psi\}$ in $E[T]$ respectively. We say that $\{\psi_\lambda\}$ is a weak direct summand of $\{\rho_\lambda\}$ (or $\{\psi_\lambda\}$ weakly divides $\{\rho_\lambda\}$) if the set  $$S_{\{\psi\}\mid\{\rho\}}:=\{v\in\Sigma_K\setminus (S_\rho\cup S_\psi): Q_v(T) ~\mathrm{\,divides~\,} P_v(T)\} $$ is of Dirichlet density one.  If $\{\psi_\lambda\}$ is abelian and weakly divides $\{\rho_\lambda\}$, we say that $\{\psi_\lambda\}$ is a weak abelian direct summand of $\{\rho_\lambda\}$.\end{defi}

%Given any connected semisimple Serre compatible system of $K$ over $E$ (with $n_0>0$), in Theorem~\ref{thm:WeakDiv} we shall attach to it a semisimple abelian compatible system of $K$ defined over a finite extension $E'\supset E$ (called the \emph{weak abelian part}), that is maximal among all weak abelian direct summands.

%Let $\{\rho_\lambda:\Gal_K\to\GL_n(\overline E_\lambda)\}$ be a connected, semisimple, $n$-dimensional Serre compatible system of $K$ defined over $E$, and denote by $\bG_\lambda\subset\GL_{n,\overline E_\lambda}$ the algebraic monodromy group of $\rho_\lambda$; by our hypotheses on $\{\rho_\lambda\}$, the groups $\bG_\lambda$ are connected reductive. 

\begin{thm}\label{thm:WeakDiv}
Suppose $\{\rho_\lambda\}$ is connected and $n_0>0$. 
There exists a finite extension $E'$ of $E$ such that if $\{\rho_\lambda\}$ 
is considered as a Serre compatible system $\{\rho_{\lambda'}\}$ of $K$ defined over $E'$\footnote{We define $\rho_{\lambda'}:=\rho_\lambda$ if $\lambda'\in\Sigma_{E'}$ divides $\lambda\in\Sigma_E$.},
then the weak abelian part $\{\rho_{\lambda'}^{\wab}\}$ is an $n_0$-dimensional Serre compatible system defined over~$E'$.
%and a connected, semisimple, abelian $n_0$-dimensional Serre compatible system $\{\psi_{\lambda'}\}$ of $K$ defined over $E'$, called the weak abelian part of $\{\rho_\lambda\}$, such that the following conditions holds.
%\begin{enumerate}[(a)]
%\item The compatible system $\{\psi_{\lambda'}\}$ weakly divides $\{\rho_{\lambda'}\}$, where $\{\rho_{\lambda'}\}$ is considered as an $E'$-rational compatible system.
%\item For every finite place $\lambda'$ of $E'$, the abelian $\psi_{\lambda'}$ is the weak abelian part $\rho_{\lambda'}^{\wab}$ of $\rho_{\lambda'}$. In particular, the abelian part $\rho_{\lambda'}^{\ab}$ is a direct summand of $\psi_{\lambda'}$.
%every semisimple abelian representation of $K$ defined over $E'_{\lambda'}$ that weakly divides $\rho_{\lambda'}$  is a direct summand of $\psi_{\lambda'}$. 
%\end{enumerate}
\end{thm}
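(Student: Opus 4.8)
\textbf{Proof proposal for Theorem \ref{thm:WeakDiv}.}
The plan is to use the structural description of the weak abelian part from Proposition~\ref{lem:WeakDiv2} to exhibit $\rho_{\lambda'}^{\wab}$ as the image of $\rho_{\lambda'}$ under a \emph{fixed} algebraic representation of the (common-type) algebraic monodromy group, and then bootstrap this algebraicity into the Serre-compatibility of the family. First I would decompose, for a single initial place $\lambda_0$, the semisimple representation $\rho_{\lambda_0}=\bigoplus_{j\in J}\rho_{j,\lambda_0}$ into absolutely irreducibles; by Corollary~\ref{cor:WeakDiv2}(i) the hypotheses of Proposition~\ref{lem:WeakDiv2} hold, and that proposition produces the characters $\xi_j=\nu_{\bG_{\rho_{j,\lambda_0}}}$ for $j\in J_0$, together with the algebraic representation $r_{\psi}:\bG_{\rho_{\lambda_0}}\to\GL_{V_\psi}$ factoring $\rho_{\lambda_0}^{\wab}$ through $\rho_{\lambda_0}$, where $\psi=\rho_{\lambda_0}^{\wab}$ has dimension $n_0=\sum_{j\in J}n_{j,0}$. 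The point is that $r_\psi$ is built purely from the $\nu_{\bG_{\rho_{j,\lambda_0}}}$-projections of Lemma~\ref{lem:OnNuG}(ii), hence from the abstract structure of the group together with its tautological $n$-dimensional representation; since by Theorem~\ref{Hui1} the formal bi-character of $\bG_\lambda$ is $\lambda$-independent, the ``same'' algebraic datum $r$ makes sense for every $\lambda$ (after possibly choosing $\lambda_0$ so that $n_0$ is maximal, so that no degeneration of the decomposition can occur at other $\lambda$).

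Next I would enlarge $E$ to $E'$ so that (i) each absolutely irreducible constituent of $\rho_{\lambda_0}$ really is realized over $\overline{E'}_{\lambda_0}$ with the splitting behaving uniformly, and (ii) the finitely many characters $\xi_j$ become $E'$-rational. Concretely: each $\xi_j$ is a one-dimensional summand of $\det(\rho_{j,\lambda_0})$-type data, hence is $E$-rational up to a finite-order twist by the argument already used in Corollary~\ref{cor:WeakDiv2}; enlarging $E$ to kill these finite-order ambiguities (and to split the relevant Galois action on eigenvalues, as in the proof of Theorem~\ref{E-rat-evi}) gives an $E'$ over which all the $\xi_j$, and therefore $\rho_{\lambda_0}^{\wab}=\bigoplus_{a\in A}\xi_a^{\oplus(\sum_{j\in J_a}n_{j,0})}$, are defined. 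Then, viewing $\{\rho_\lambda\}$ as the system $\{\rho_{\lambda'}\}$ over $E'$, I claim $\rho_{\lambda'}^{\wab}:=r(\rho_{\lambda'})$ for the fixed algebraic representation $r$ is a weak abelian direct summand of $\rho_{\lambda'}$ of dimension $n_0$ for every $\lambda'$: dimension and weak-divisibility both can be read off at the level of the formal character, which is $\lambda$-independent (Theorem~\ref{Hui1}), and maximality follows from Proposition~\ref{lem:WeakDiv2}(b) since the multiplicity bounds $e_i\le\sum_{j\in J_{a_i}}n_{j,0}$ are again formal-character statements. Finally, the family $\{r(\rho_{\lambda'})\}$ is a Serre compatible system over $E'$ because for almost all $v$ the characteristic polynomial of $r(\rho_{\lambda'})(\Frob_v)$ is a polynomial expression (with $\Q$-coefficients, coming from the fixed representation $r$) in the coefficients of the characteristic polynomial of $\rho_{\lambda'}(\Frob_v)$, and those coefficients are the $\lambda$-independent $P_v(T)\in E'[T]$ by definition of the SCS; alternatively one invokes the equivalence of $\mathsection$\ref{s24} applied to the abelian semisimple representation $r(\rho_{\lambda'})$ once one knows it is $E'$-rational, which follows from Corollary~\ref{potential} together with the construction of an ambient $E'$-rational abelian compatible system $\{\phi_{\lambda'}\}$ containing each $\rho_{\lambda'}^{\wab}$ exactly as in the proof of Theorem~\ref{E-rat-evi}.

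I expect the main obstacle to be the uniformity across $\lambda$: a priori the decomposition $\rho_\lambda=\bigoplus_j\rho_{j,\lambda}$ into absolutely irreducibles, the partition $J=J_0\sqcup(J\setminus J_0)$, the further partition $J_0=\bigsqcup_{a\in A}J_a$, and the characters $\xi_j$ could all vary with $\lambda$, and it is not literally true that each $\rho_{j,\lambda}$ is itself part of a compatible system. The remedy — and this is the technical heart of the argument — is to work not with the individual $\rho_{j,\lambda}$ but with the single algebraic representation $r$ of the reductive group of a fixed formal type: Theorem~\ref{Hui1} guarantees the formal character of $\bG_\lambda$ and of $\bG_\lambda^{\der}$ is $\lambda$-independent, so the multiplicity $n_0$ of weight zero, the relevant weight-zero isotypic projection inside $\Sym^\bullet$ of the standard representation, and hence $r$ itself, are genuinely $\lambda$-independent once $\lambda_0$ is chosen to maximize $n_0$ (as remarked in the footnote). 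One then only needs the weak-divisibility and maximality at each $\lambda$, both of which are Chebotarev-plus-formal-character statements already established in Lemma~\ref{lem:WeakDiv} and Proposition~\ref{lem:WeakDiv2}, so no genuinely new input beyond Theorem~\ref{Hui1} and the $E'$-rationality results of $\mathsection$\ref{s25} is required.
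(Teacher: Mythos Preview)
Your central mechanism---a single algebraic representation $r$ of ``the'' monodromy group that is $\lambda$-independent---is not justified by Theorem~\ref{Hui1}. Independence of the formal (bi-)character does \emph{not} give an identification of the groups $\bG_\lambda$ themselves, nor of the decomposition $\rho_\lambda=\bigoplus_j\rho_{j,\lambda}$ into irreducibles: the paper's own example after Theorem~\ref{Hui1} (where $\bG_\lambda^{\der}$ can be $\SL_2\times\{1\}$ or $\SO_3$ with the same formal character) shows that the block structure, and hence the projection maps $\nu_{\bG_{\rho_j}}$ used to build $r_\psi$ in Lemma~\ref{lem:OnNuG}(ii), can change with $\lambda$. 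So your claim that ``$r$ itself [is] genuinely $\lambda$-independent'' has no content as stated, and your compatibility argument (expressing the characteristic polynomial of $r(\rho_{\lambda'})(\Frob_v)$ as a fixed polynomial in the coefficients of $P_v$) breaks down, because $r$ is not a map determined by the formal character alone. You identify this as the main obstacle, but your proposed remedy does not resolve it.

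The paper's proof avoids this issue entirely and is much shorter. Fix one $\lambda_0$; by Theorem~\ref{abpart} the weak abelian part $\rho_{\lambda_0}^{\wab}$ is locally algebraic, so by the equivalence (Loc-alg)$\Rightarrow$($E$-SCS) of $\mathsection$\ref{s24} it extends to an abelian semisimple SCS $\{\psi_{\lambda'}\}$ over some finite $E'/E$. Now weak divisibility is purely a condition on characteristic polynomials of Frobenii, and these agree across the two compatible systems $\{\psi_{\lambda'}\}$ and $\{\rho_{\lambda'}\}$; since $\psi_{\lambda_0'}$ weakly divides $\rho_{\lambda_0'}$, the same divisibility holds at every $\lambda'$, so each $\psi_{\lambda'}$ is a weak abelian direct summand of $\rho_{\lambda'}$ of dimension $n_0$. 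By Proposition~\ref{lem:WeakDiv2} (and the $\lambda$-independence of $n_0$), $\rho_{\lambda'}^{\wab}$ has dimension exactly $n_0$ and contains $\psi_{\lambda'}$, hence $\psi_{\lambda'}=\rho_{\lambda'}^{\wab}$. No uniform $r$, no control of the irreducible decomposition at varying $\lambda$, is needed: the whole argument lives at the level of Frobenius characteristic polynomials.
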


\begin{proof}
Let $\lambda_0$ be a finite place of $E$. 
Since the weak abelian part $\rho_{\lambda_0}^{\wab}$ of $\rho_{\lambda_0}$ is locally algebraic (Theorem \ref{abpart}),
it can be extended to an abelian 
semisimple Serre compatible system $\{\psi_{\lambda'}\}$ defined over some finite extension $E'$ of $E$
such that $\rho_{\lambda_0}^{\wab}=\psi_{\lambda_0'}$. For $\lambda'\in\Sigma_{E'}$,
since $\psi_{\lambda'}$ is a degree $n_0$ weak abelian direct summand of $\rho_{\lambda'}$,
it coincides with $\rho_{\lambda'}^{\wab}$ by Proposition \ref{lem:WeakDiv2}. We are done.
\end{proof}

\section{Irreducibility and $\ell$-adic Hodge properties}
Suppose $K$ is a totally real field (or CM) field. Let 
$\pi$ be a regular algebraic cuspidal automorphic 
representation of $\GL_n(\A_K)$. 
For a rational prime $\ell$, denote by $S_\ell$ the set of finite places $v\in\Sigma_K$ above $\ell$.

\subsection{Serre compatible system attached to $\pi$}\label{s31}
By Clozel \cite[Theorem 3.13, Lemmas 3.14, 3.15]{Cl90}, $\pi$ is \emph{C-arithmetic}.
By  \cite[Proposition 5.2.3]{BG14}, $\pi$ is C-arithmetic if and only if $\pi\otimes|\det|^{\frac{1-n}{2}}$ 
is \emph{L-arithmetic}. Thus, there exist a number field $E$ and a finite subset $S\subset\Sigma_K$ containing the ramified primes of $\pi$ such that the Satake parameters of $\pi_v\otimes|\det|_v^{\frac{1-n}{2}}$
are defined over $E$ for all $v\in\Sigma_K\backslash S$.
Any field isomorphism $\iota: \C\to \overline\Q_\ell$ induces 
a finite place $\lambda\in\Sigma_E$ dividing $\ell$, so that $\overline\Q_\ell$
can be identified as $\overline E_\lambda$.
Then the semisimple $\ell$-adic representation \eqref{repn} of $K$, 
\begin{equation}\label{pi-repn}
\rho_{\pi,\iota}:\Gal_K\to\GL_n(\overline\Q_\ell)=\GL_n(\overline E_\lambda),
\end{equation}
is $E$-rational and unramified outside $S$. If $\iota':\C\to\overline\Q_\ell$
is another isomorphism that induces the same $\lambda\in\Sigma_E$, then the semisimple
$\rho_{\pi,\iota}$ and $\rho_{\pi,\iota'}$ are equivalent representation by \eqref{lg}, 
which we denote by $\rho_{\lambda}$.
Hence, the set $\{\rho_{\pi,\iota}:~\iota: \C\to \overline\Q_\ell,~\ell~\text{rational prime}\}$
of Galois representations
is equal to the family of $\ell$-adic representations,
\begin{equation}\label{pi-sys}
\{\rho_{\lambda}:\Gal_K\to\GL_n(\overline E_\lambda):~\lambda\in\Sigma_E\},
\end{equation}
which by \eqref{lg}, is a semisimple Serre compatible system of $K$ defined over $E$ unramified outside $S$.
Let $\bG_\lambda$ be the algebraic monodromy group of $\rho_\lambda$.

\subsection{Irreducibility and algebraic monodromy groups}
Suppose $K$ is totally real and $n=3$.
 We prove  the irreducibility of 
$\rho_{\lambda}$ (Theorem \ref{main}) by an L-function argument in \cite{Ram13} 
and study the $\lambda$-independence of $\bG_\lambda$.

\subsubsection{Proof of Theorem \ref{main}.}
If $\rho_\lambda$ is reducible, we have a decomposition $\rho_\lambda=\sigma_\lambda\oplus\tau_\lambda$ 
where $\sigma_\lambda$ is two-dimensional and $\tau_\lambda$ is a character. Then 
\begin{align}\label{Alt}
\begin{split}
\mathrm{Alt}^2(\rho_\lambda)&=(\sigma_\lambda\otimes\tau_\lambda)\oplus \det(\sigma_\lambda),\\
\tau_\lambda^2\oplus \mathrm{Alt}^2(\rho_\lambda)&=(\rho_\lambda\otimes\tau_\lambda)\oplus \det(\sigma_\lambda),\\
1\oplus (\mathrm{Alt}^2(\rho_\lambda)\otimes\tau_\lambda^{-2})&=(\rho_\lambda\otimes\tau_\lambda^{-1})\oplus (\det(\sigma_\lambda)\cdot\tau_\lambda^{-2}).
\end{split}
\end{align}
Since $\tau_\lambda$ is locally algebraic by Theorem \ref{abpart}, it comes from 
an algebraic Hecke character $\tau$ of $K$. 
Since $\rho_\lambda$ is unramified outside $S'=S\cup S_\ell$,
we obtain from \eqref{Alt} and the local-global compatibility \eqref{lg} 
an equation on partial L-functions
\begin{equation}\label{L-eq}
L^{S'}(1,s)L^{S'}(\mathrm{Alt}^2(\pi)\otimes\psi_1,s)=L^{S'}(\pi\otimes\psi_2,s)L^{S'}(\psi_3,s),
\end{equation}
where $\psi_1,\psi_2,\psi_3$ are some algebraic Hecke characters of $K$ and $\psi_3$ corresponds to the locally algebraic character
$\det(\sigma_\lambda)\cdot\tau_\lambda^{-2}$.

Suppose $\det(\sigma_\lambda)\cdot\tau_\lambda^{-2}$ is non-trivial. On the one hand,
the right-hand side of \eqref{L-eq} at $s=1$ belongs to $\C$
by $\psi_3$ non-trivial and $\pi$ cuspidal. On the other hand, $L^{S'}(1,s)$ 
has a simple pole at $s=1$ and $L^{S'}(\mathrm{Alt}^2(\pi)\otimes\psi_1,s)$
is non-zero at $s=1$ by Shahidi \cite[Theorem 1.1]{Sh97}. We get a contradiction.

Suppose $\det(\sigma_\lambda)\cdot\tau_\lambda^{-2}$ is trivial. Then $\rho_\lambda\otimes\tau_\lambda^{-1}$ is self-dual.
Since $\tau_\lambda$ is locally algebraic, $\pi$ is essentially self-dual by multiplicity one \cite{JS81}.
But in this case, $\rho_\lambda$ is irreducible (see \cite[$\mathsection4.4.3$]{Hu23a}).
We conclude that $\rho_\lambda$ is irreducible.\qed

\begin{remark}
F. Calegari suggested to us an alternative proof of Theorem \ref{main} that avoids the use of $\mathrm{Alt}^2(\rho_\lambda)$. 
The key idea, in the case that $\rho_\lambda$ is reducible, was to add a suitable Hecke character $\chi$ to $\pi$, so that the isobaric sum $\pi\boxplus\chi$ is self-dual up to twist. His suggestion initially used Hodge-theoretic properties of $\rho_\lambda$ that currently do not seem to be in the literature unless $\pi$ is polarizable. 
However  it can also be completed by using the results of the previous section as follows:%
% that would require one to know that $\rho_\lambda$ is: the following alternative proof of Theorem \ref{main}:
\\[.3em]
{\em Suppose $\rho_\lambda$ is reducible with a decomposition $\rho_\lambda=\sigma_\lambda\oplus\tau_\lambda$ 
where $\sigma_\lambda$ is a two-dimensional representation and $\tau_\lambda$ is a character. As above, by Theorem \ref{abpart}, $\tau_\lambda$ 
comes from an algebraic Hecke character $\tau$ of $K$, and so $\tau_\lambda$ is $E'$-rational for some number field $E'\supset E$. 
Hence $\sigma_\lambda$ is $E'$-rational, and this then also follows 
for $\det\sigma_\lambda$, so that again by Theorem \ref{abpart}, 
also $\det\sigma_\lambda$ comes
from an algebraic Hecke character, say $\mu$, of $K$.

Define $\pi':=(\pi\otimes|\det|^{-1})\boxplus(\tau^{-1}\mu)$. Then one checks that $\rho_\lambda\oplus(\tau_\lambda^{-1}\det\sigma_\lambda)$ has contragradient representation isomorphic to its twist by $(\det\sigma_\lambda)^{-1}$, and it follows from multiplicity one that $(\pi')^\vee\cong\pi'\otimes \mu^{-1}$, i.e., that $\pi'$ is essentially self-dual. %Using the corresponding $L$-functions, we deduce that $\pi^\vee\cong\pi\otimes\mu^{-1}$ and $(\tau^{-1}\mu)^{-1}=(\tau^{-1}\mu)\otimes\mu^{-1}$, and from the latter that $\mu=\tau^2$. 
Since $\pi'$ is of isobaric type (3,1), $\pi$ is also essentially self-dual. 
We conclude by the last lines of the previous proof.\qed}
\end{remark}

\subsubsection{Algebraic monodromy groups}
The faithful representation $\bG_\lambda\hookrightarrow\GL_{3,\overline E_\lambda}$
is irreducible for all $\lambda$ by Theorem \ref{main}.
If $\rho_\lambda$ is \emph{Lie-irreducible}\footnote{The faithful representation 
$\bG_\lambda^\circ\to \GL_{3,\overline E_\lambda}$
is irreducible.}, there are two types:
\begin{enumerate}
\item[($A_1$):] $\SO_{3,\overline E_\lambda}\subset \bG_\lambda\subset \GO_{3,\overline E_\lambda}$;
\item[($A_2$):] $\SL_{3,\overline E_\lambda}\subset \bG_\lambda$.
\end{enumerate}
If $\rho_\lambda$ is not Lie-irreducible, there are two types by \cite[Theorems 1,2]{Cl37} (see also \cite[Lemma 4.3]{CG13}):
\begin{enumerate}
\item[(D):] $\rho_\lambda(\Gal_L)$ belongs to the center of $\GL_{3,\overline E_\lambda}$ 
for some finite extension $L/K$;
\item[(Ind):] $\rho_\lambda=\mathrm{Ind}^K_L \psi_\lambda$ is induced from 
a locally algebraic character $\psi_\lambda:\Gal_L\to\overline E_\lambda^\times$ 
for some cubic extension $L/K$ and the three characters of the abelian 
$\rho_\lambda|_{\Gal_{L'}}$ are distinct for some $L'/L$.
\end{enumerate}
Note that $\bG_\lambda^{\der}$ is trivial for type (D) and (Ind).

\begin{thm}\label{mthm}
The irreducible type of $\rho_\lambda$ is independent of $\lambda$. 
Moreover, the following assertion hold.
\begin{enumerate}[(i)]
\item Suppose $\rho_\lambda$ is of type ($A_1$). The automorphic representation $\pi$ is essentially self-dual.
\item Suppose $\rho_\lambda$ is of type (Ind). After replacing $E$ by a finite extension,
the compatible system $\{\rho_\lambda\}_\lambda$ of $K$
is induced from a one-dimensional  $E$-rational compatible system 
$\{\psi_\lambda\}_\lambda$ of a cubic extension $L/K$.
\end{enumerate}
\end{thm}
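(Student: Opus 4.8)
The plan is to combine the $\lambda$-independence of the formal character (Theorem~\ref{Hui1}) with the four-way case analysis ($A_1$, $A_2$, (D), (Ind)) already isolated above, and to treat each invariant in turn. First I would observe that among the four types, types (D) and (Ind) are exactly those with $\bG_\lambda^{\der}$ trivial, type ($A_1$) is the one with $\bG_\lambda^{\der}$ of type $A_1$ (i.e. $\SO_3$), and type ($A_2$) is the one with $\bG_\lambda^{\der}$ of type $A_2$ (i.e. $\SL_3$). Since by Theorem~\ref{Hui1} the formal character of $\bG_\lambda^{\der}$ is independent of $\lambda$, the semisimple rank of $\bG_\lambda^{\der}$ and the root datum it determines are $\lambda$-independent, so the trichotomy ``$\bG_\lambda^{\der}$ trivial / of type $A_1$ / of type $A_2$'' does not depend on $\lambda$. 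This immediately separates $\{(D),(\mathrm{Ind})\}$ from $(A_1)$ from $(A_2)$ uniformly in $\lambda$. To finish the ``independence'' statement I would then distinguish (D) from (Ind): type (D) is characterized by $\rho_\lambda$ having abelian (indeed central) image on some open subgroup, equivalently $\bG_\lambda^\circ$ central, equivalently $\bG_\lambda^\circ$ a subtorus of $\G_m$; this is again detected by the formal character of $\bG_\lambda^\circ$ (dimension $\le 1$, acting by a single weight with multiplicity $3$), hence $\lambda$-independent by Theorem~\ref{Hui1}. So (D) versus (Ind) is likewise $\lambda$-independent, completing the first assertion.

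For part (i), suppose $\rho_\lambda$ is of type ($A_1$), so $\SO_3 \subset \bG_\lambda \subset \GO_3$. Then $\rho_\lambda$ preserves a nondegenerate symmetric bilinear form up to the similitude (homothety) character $\mu_\lambda := \nu_{\bG_\lambda}$; concretely $\rho_\lambda^\vee \cong \rho_\lambda \otimes \mu_\lambda^{-1}$. By Lemma~\ref{lem:OnNuG}(ii) (applicable because $n_0 = n_{0} = 1 > 0$ for the standard $3$-dimensional representation of $\SO_3$, whose weights are $\{1,0,-1\}$), the character $\mu_\lambda$ is $\nu_{\bG_\lambda}$, which weakly divides $\rho_\lambda$; hence by Theorem~\ref{abpart} it is locally algebraic and so comes from an algebraic Hecke character $\mu$ of $K$. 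Using the local-global compatibility \eqref{lg}, the self-duality $\rho_\lambda^\vee \cong \rho_\lambda \otimes \mu_\lambda^{-1}$ translates into $\pi^\vee \cong \pi \otimes \mu^{-1}$ (up to the normalizing twist by $|\det|^{\frac{1-n}{2}}$ and its inverse) at almost all places, and then strong multiplicity one for $\GL_3$ \cite{JS81} upgrades this to an isomorphism of automorphic representations. Thus $\pi$ is essentially self-dual.

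For part (ii), suppose $\rho_\lambda$ is of type (Ind), so $\rho_\lambda = \Ind_L^K \psi_\lambda$ for a cubic extension $L/K$ and a locally algebraic character $\psi_\lambda$ of $\Gal_L$. First note the cubic field $L$ itself is $\lambda$-independent: $L$ is the fixed field of the open subgroup $\{\sigma \in \Gal_K : \rho_\lambda|_{\Gal_\sigma\text{-fixes}}\text{ is a sum of three characters permuted transitively}\}$, which can be read off from the action of $\bG_\lambda/\bG_\lambda^\circ$ on the three weight lines; since $\bG_\lambda^\circ$ is a torus (as in type (D) above) and the component group acts through $\Gal(L/K)$, the extension $L/K$ is determined by the Galois-module structure, independent of $\lambda$. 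Now $\psi_\lambda$ is locally algebraic (hypothesis of type (Ind)), hence by the equivalence in $\mathsection$\ref{s24} it extends to a one-dimensional Serre compatible system $\{\psi_{\lambda'}\}$ of $L$ defined over some number field $E'\supset E$. Enlarging $E$ to $E'$ and setting $\rho'_{\lambda'} := \Ind_L^K \psi_{\lambda'}$ gives a compatible system: the characteristic polynomial of $\rho'_{\lambda'}(\Frob_v)$ at an unramified $v$ is, by the standard description of induced representations, the product over places $w \mid v$ of $L$ of $(T^{f_w} - \psi_{\lambda'}(\Frob_w))$, which has coefficients in $E'$ and is $\lambda'$-independent. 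Finally $\rho'_{\lambda'}$ agrees with $\rho_{\lambda'}$ at $\lambda' \mid \lambda_0$ by construction, hence at all $\lambda'$ since both are members of Serre compatible systems with equal characteristic polynomials at a density-one set of places; so $\{\rho_\lambda\} = \{\Ind_L^K \psi_{\lambda'}\}$ as required.

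The main obstacle I expect is not any single hard input but the bookkeeping in part (ii): one must be careful that the extension $L/K$ and the character system $\{\psi_{\lambda'}\}$ can be chosen uniformly in $\lambda$, and that passing from ``$\rho_{\lambda_0}$ is induced'' to ``the whole compatible system is induced'' is legitimate — this is where the $\lambda$-independence of $\bG_\lambda$ (via Theorem~\ref{Hui1}) and the rigidity of Serre compatible systems (equality of characteristic polynomials on a density-one set forces equality of the systems) are both essential. In part (i) the only subtlety is matching the automorphic and Galois-theoretic normalizations of the similitude character, which is routine given \eqref{lg}.
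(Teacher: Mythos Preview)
Your proof is correct and rests on the same key inputs as the paper's (Theorem~\ref{Hui1} for type independence, Theorem~\ref{abpart} for local algebraicity of the twist character, multiplicity one for the automorphic conclusion, and the equivalence in \S\ref{s24} for extending $\psi_{\lambda_0}$ to a compatible system), but two steps are organized differently. First, to separate (D) from (Ind) you invoke the $\lambda$-independence of the formal character of $\bG_\lambda$ itself (one weight of multiplicity $3$ versus three distinct weights), whereas the paper argues that $\ad(\rho_{\lambda_0})$ has finite image in case (D) and then uses Serre's result that finiteness of the image is a $\lambda$-independent property of a compatible system; the paper never proves (Ind) is preserved abstractly but instead establishes (ii) directly, which a posteriori gives the independence. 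Second, for (i) you extract the self-duality twist as the homothety character $\nu_{\bG_\lambda}$, a weak abelian direct summand of $\rho_\lambda$ itself, while the paper extracts it as the abelian part $\tau_{\lambda_0}$ of the adjoint representation $\ad(\rho_{\lambda_0})=\rho_{\lambda_0}\otimes\rho_{\lambda_0}^\vee$; your route is slightly more direct and nicely showcases the weak-abelian-summand machinery on $\rho_\lambda$ rather than on a tensor construction.

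Two small points. In (i), the similitude character of $\GO_3\cong\SO_3\times\mathbb{G}_m$ is $\nu_{\bG_\lambda}^2$, not $\nu_{\bG_\lambda}$ (if $(h,z)$ acts by $zh$ then the bilinear form scales by $z^2$), so the correct relation is $\rho_\lambda^\vee\cong\rho_\lambda\otimes\nu_{\bG_\lambda}^{-2}$; this is harmless since local algebraicity of $\nu_{\bG_\lambda}$ gives that of its square. In (ii), your digression on the $\lambda$-independence of $L$ is imprecisely stated and in fact unnecessary: as the paper does, you can simply fix $L$ from the chosen $\lambda_0$, build $\{\Ind_L^K\psi_{\lambda'}\}$, and check it agrees with $\{\rho_{\lambda'}\}$ by comparing Frobenius polynomials.
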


\begin{proof}
Let $\lambda_0\in\Sigma_E$. 
If $\rho_{\lambda_0}$ is of type ($A_1$) (resp. ($A_2$)),
then $\bG_{\lambda_0}^{\der}=\SO_3$ (resp. $\SL_3$).
Irreducibility of $\rho_\lambda$ and $\lambda$-independence of the formal character of $\bG_\lambda^{\der}$ 
(Theorem \ref{Hui1}) imply that $\bG_\lambda^{\der}=\SO_3$ (resp. $\SL_3$).
Moreover, the $E$-rational adjoint representation 
$\ad(\rho_{\lambda_0}):=\rho_{\lambda_0}\otimes\rho_{\lambda_0}^\vee$
is the sum of an irreducible eight-dimensional representation and a locally algebraic 
character $\tau_{\lambda_0}=\ad(\rho_{\lambda_0})^{\ab}$ (Theorem \ref{abpart})
such that $\rho_{\lambda_0}\cong  \rho_{\lambda_0}^\vee\otimes \tau_{\lambda_0}.$
Hence, $\pi$ is essentially self-dual by multiplicity one \cite{JS81}.

If $\rho_{\lambda_0}$ is of type (D), the adjoint representation 
$\ad(\rho_{\lambda_0})=\rho_{\lambda_0}\otimes\rho_{\lambda_0}^\vee$ has finite image.
Since $\{\ad(\rho_\lambda):=\rho_{\lambda}\otimes\rho_{\lambda}^\vee\}$ is a Serre compatible system,
 $\ad(\rho_{\lambda})$ has finite image  
for all $\lambda$ by \cite{Se81b}, which implies $\rho_\lambda(\Gal_L)$ belong to the homothety for all $\lambda$.

If  $\rho_{\lambda_0}$ is of type (Ind), then
it is potentially abelian and $E$-rational, which 
imply that $\psi_{\lambda_0}$ is $E'$-rational for 
some $E\subset E'\subset\overline E_\lambda$.
There exists a Serre compatible system $\{\psi_{\lambda'}:~\lambda'\in\Sigma_{E'}\}$ 
of $L$ defined over $E'$ extending $\psi_{\lambda_0}$
and moreover, $\{\mathrm{Ind}^L_K \psi_{\lambda'}\}$ is a semisimple Serre compatible system of $K$
containing $\{\rho_\lambda\}$.
Since $\rho_{\lambda_0}|_{\Gal_{L'}}$ is a direct sum of three distinct locally algebraic characters,
this is true for $\rho_\lambda|_{\Gal_{L'}}$ for all $\lambda$ by compatibility.
We are done.
\end{proof}

\begin{remark}
The arguments in this section do not use the regularity of $\pi$.
In next subsection, this condition will be used to prove that $\rho_\lambda$ 
cannot be of type (D), see Proposition \ref{noD}.
\end{remark}

\subsection{$\ell$-adic Hodge properties}
Suppose $K$ is totally real and $\pi$ is a regular algebraic 
cuspidal automorphic representation of $\GL_3(\A_K)$. 
The \emph{weight}  $(a_{\sigma,i})\in (\Z^3)^{\text{Hom}(K,\C)}$ of $\pi$
(see \cite[$\mathsection2.1$]{BLGGT14b}) satisfies
$$a_{\sigma,1}\geq a_{\sigma,2}\geq a_{\sigma,3}$$
for all $\sigma\in \text{Hom}(K,\C)$.
Let $\{\rho_\lambda:=\rho_{\pi,\iota_\lambda}\}$ be the SCS defined over $E$ attached to $\pi$ 
in \eqref{pi-sys}, where 
$\iota_\lambda:\C\to\overline E_\lambda$ is an isomorphism for all $\lambda$.
Recall that  the residue characteristic of $\lambda\in\Sigma_E$ is denoted $\ell(\lambda)$.

By Theorem \ref{mthm}, we define the irreducible type of the SCS $\{\rho_\lambda\}$ 
as the type of $\rho_\lambda$.
If $\{\rho_\lambda\}$ is of type (D) or (Ind), then the SCS is potentially abelian 
and thus $\rho_\lambda$ is de Rham for all $\lambda$.
If $\{\rho_\lambda\}$ is of type ($A_1$), then $\pi$ is essentially self-dual (Theorem \ref{mthm}(i))
and by \cite{BLGGT12,BLGGT14a} and \cite{Ca14},  
the following $\ell$-adic Hodge properties hold for $v\in S_{\ell(\lambda)}$.
\begin{enumerate}
\item[(dR):]  The local representation $\rho_\lambda|_{\Gal_{K_v}}$ is de Rham
with (distinct) $\tau$-Hodge-Tate numbers 
$$HT_\tau:=\{a_{\iota_\lambda^{-1}\circ\tau}+2,~a_{\iota_\lambda^{-1}\circ\tau}+1,~a_{\iota_\lambda^{-1}\circ\tau}\},$$
where $\tau:K_v\to \overline E_\lambda$.
\item[(Cr):] If $\pi_v$ is unramified, then $\rho_\lambda|_{\Gal_{K_v}}$ is crystalline.
\end{enumerate}

\noindent Denote  by $\F_\lambda$ the residue field of $E_\lambda$ and
by $\bar\rho_\lambda:\Gal_K\to\GL_3(\overline\F_\lambda)$ the residual representation
given by the \emph{semisimplified reduction} of $\rho_\lambda$.
A recent result of A'Campo \cite[Theorem 1.0.6]{A'C23} asserts that 
the $\ell$-adic Hodge properties (dR) and (Cr) hold if 
\begin{enumerate}
\item[(RI):] $\rho_\lambda$ is residually irreducible (i.e., $\bar\rho_\lambda$ is irreducible) and
\item[(DG):] $\bar\rho_\lambda$ is \emph{decomposed generic} (see \cite[Definition 4.3.1]{ACC+23}).
\end{enumerate}
A sufficient condition of (DG) is as follows.

\begin{lemma}\label{lem3}\cite[Lemma 7.1.5]{ACC+23}
Suppose $L$ is a number field and $\bar r:\Gal_L\to \GL_n(\overline\F_\ell)$ is a continuous representation.
Denote by $\ad(\bar r)$ the adjoint representation $\bar r\otimes\bar r^\vee$ and by
$M$ the normal closure of $\overline L^{\mathrm{Ker}(\ad(\bar r))}$ over $\Q$.
If $M$ does not contain a primitive $\ell$th root of unity, then $\bar r$ is decomposed generic.
\end{lemma}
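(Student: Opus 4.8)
The plan is to translate \cite[Definition 4.3.1]{ACC+23} into a statement about Frobenius elements and then invoke the Chebotarev density theorem. Recall that $\bar r$ is \emph{decomposed generic} if there is a rational prime $p\neq\ell$ which splits completely in $L$ and above which $\bar r$ is unramified, together with a place $v\mid p$ of $L$, such that no ratio $\alpha_i\alpha_j^{-1}$ (with $i\neq j$) of the eigenvalues $\alpha_1,\dots,\alpha_n$ of $\bar r(\Frob_v)$ equals $q_v$ in $\overline\F_\ell^\times$. The elementary observation that makes everything work is that $\ker(\ad(\bar r))=\{g\in\Gal_L:\bar r(g)\in\overline\F_\ell^\times\cdot\mathrm{Id}\}$; hence, setting $L':=\overline L^{\ker(\ad(\bar r))}$, the representation $\ad(\bar r)$ factors through $\Gal(L'/L)$, and for $\sigma\in\Gal_L$ the image of $\sigma$ in $\Gal(L'/L)$ is trivial precisely when $\bar r(\sigma)$ is a scalar matrix --- in which case all its eigenvalues coincide and every ratio $\alpha_i\alpha_j^{-1}$ equals $1$.

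I would then work inside the finite Galois extension $M(\zeta_\ell)/\Q$, where $M$ is the normal closure of $L'$ over $\Q$ from the statement, so that $L'\subseteq M$. Since $\zeta_\ell\notin M$ by hypothesis, the restriction map $\Gal(M(\zeta_\ell)/\Q)\to\Gal(M/\Q)$ has non-trivial kernel $\Gal(M(\zeta_\ell)/M)\cong\Gal(\Q(\zeta_\ell)/M\cap\Q(\zeta_\ell))\hookrightarrow(\Z/\ell)^\times$; this kernel is a normal subgroup, and every non-identity element of it acts trivially on $M$ but non-trivially on $\mu_\ell$. Applying the Chebotarev density theorem to the conjugation-stable non-empty set $\Gal(M(\zeta_\ell)/M)\setminus\{1\}\subset\Gal(M(\zeta_\ell)/\Q)$, I obtain a rational prime $p$, unramified in $M(\zeta_\ell)$, with $p\neq\ell$ and not lying below any ramified place of $\bar r$, whose Frobenius in $\Gal(M(\zeta_\ell)/\Q)$ lies in this set. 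Then $p$ splits completely in $M$, hence in $L$ and in $L'$, while $p\not\equiv1\pmod\ell$.

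To conclude, choose any place $v\mid p$ of $L$. Since $p$ splits completely in $L'$, the local extension $L'_{w}/L_v$ is trivial for $w\mid v$, so $\Frob_v$ restricts to the identity on $L'$; by the observation in the first paragraph, $\bar r(\Frob_v)$ is scalar, whence all eigenvalue ratios $\alpha_i\alpha_j^{-1}$ equal $1\in\overline\F_\ell^\times$. On the other hand $q_v=p$ and $p\not\equiv1\pmod\ell$ give $q_v\neq1$ in $\overline\F_\ell^\times$. Hence $\alpha_i\alpha_j^{-1}=1\neq q_v$ for all $i\neq j$, and $v$ exhibits $\bar r$ as decomposed generic.

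The argument is short once the bookkeeping is in place, so the main (and essentially the only) obstacle is to pin down \cite[Definition 4.3.1]{ACC+23} precisely in the first paragraph. One must verify that the ``eigenvalue ratios in generic position'' condition does not covertly impose $q_v\equiv1\pmod\ell$ --- which would kill the scalar-Frobenius shortcut and instead force a genuine regular-semisimplicity argument together with the linear disjointness of $L'$ and $L(\zeta_\ell)$ over $L$ --- and that the underlying rational prime is indeed required to split completely in $L$; it is this last requirement that forces one to pass to the normal closure $M$ of $L'$ \emph{over $\Q$} rather than working with $L'$ over $L$, and thus explains why the hypothesis is phrased in terms of $M$.
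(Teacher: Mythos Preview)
The paper does not supply its own proof of this lemma; it is quoted verbatim from \cite[Lemma~7.1.5]{ACC+23} and used as a black box. Your argument is correct and is essentially the standard proof (and, as far as one can tell, the one given in \cite{ACC+23}): pick by Chebotarev a rational prime $p$ whose Frobenius in $\Gal(M(\zeta_\ell)/\Q)$ lies in the non-trivial normal subgroup $\Gal(M(\zeta_\ell)/M)\setminus\{1\}$, so that $p$ splits completely in $M\supset L'\supset L$ while $p\not\equiv 1\pmod\ell$; then $\bar r(\Frob_v)$ is scalar for $v\mid p$, giving eigenvalue ratios all equal to $1\neq q_v=p$ in $\overline\F_\ell^\times$. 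Your bookkeeping (normality of $\Gal(M(\zeta_\ell)/M)$ via $M/\Q$ Galois, $p\neq\ell$ because $\ell$ ramifies in $\Q(\zeta_\ell)$, avoidance of finitely many ramified primes) is all in order, and your closing remarks correctly isolate why the hypothesis involves the normal closure over~$\Q$.
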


\begin{remark}
Conversely, if $\{\rho_\lambda\}$ is a strictly compatible system in the sense of \cite[Section 5.1]{BLGGT14b} 
with (distinct) $\tau$-Hodge-Tate weights (e.g., when $\{\rho_\lambda\}$ satisfies the local-global compatibility \eqref{lg} for all $v\in\Sigma_K$ and also (dR) and (Cr) above), then (RI) holds for 
almost all $\lambda$ by Theorem \ref{main}
and \cite[Theorem 1.1(iii)]{Hu23b}.
\end{remark}

We are now ready to exclude irreducible type (D).

\begin{prop}\label{noD}
The compatible system $\{\rho_\lambda\}$ cannot be of type (D).
\end{prop}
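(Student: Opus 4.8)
The plan is to argue by contradiction: being of type (D) forces $\rho_\lambda$ to become, after a finite base change, three copies of a single locally algebraic character (via Theorem \ref{abpart}), which makes its generalized Hodge--Tate weights constant, whereas the regularity of $\pi$ forces these weights to be pairwise distinct.

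In detail, suppose $\{\rho_\lambda\}$ is of type (D) and fix $\lambda\in\Sigma_E$. By definition there is a finite extension $L/K$ with $\rho_\lambda(\Gal_L)$ central in $\GL_{3,\overline E_\lambda}$, so that $\rho_\lambda|_{\Gal_L}\cong\chi^{\oplus 3}$ for a single continuous character $\chi\colon\Gal_L\to\overline E_\lambda^\times$. Restriction to $\Gal_L$ preserves semisimplicity and $E$-rationality, so $\rho_\lambda|_{\Gal_L}$ is a semisimple $E$-rational representation of $L$, and $\chi$, being a direct summand of it, is one of its weak abelian direct summands (a subrepresentation weakly divides, as noted after Definition~\ref{def:WeakDiv}). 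By Theorem~\ref{abpart} applied over $L$, the character $\chi$ is locally algebraic; hence $\chi|_{\Gal_{L_w}}$, and therefore $\rho_\lambda|_{\Gal_{L_w}}\cong(\chi|_{\Gal_{L_w}})^{\oplus 3}$, is Hodge--Tate for every place $w$ of $L$ above $\ell$, with all three $\widetilde\tau$-Hodge--Tate weights equal for each embedding $\widetilde\tau\colon L_w\hookrightarrow\overline E_\lambda$.

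Now I use that generalized Hodge--Tate (Sen) weights are insensitive to restriction to an open subgroup. Thus, for every place $v$ of $K$ above $\ell$, every embedding $\tau\colon K_v\hookrightarrow\overline E_\lambda$, and any extension $\widetilde\tau$ of $\tau$ to a place $w\mid v$ of $L$, the multiset of $\tau$-generalized Hodge--Tate weights of $\rho_{\pi,\iota_\lambda}|_{\Gal_{K_v}}=\rho_\lambda|_{\Gal_{K_v}}$ equals that of $\widetilde\tau$-generalized Hodge--Tate weights of $\rho_\lambda|_{\Gal_{L_w}}$, namely a single integer with multiplicity three. On the other hand, by the construction of $\rho_{\pi,\iota_\lambda}$ (see \cite{HLTT16}, \cite{Sc15}, \cite{Va18} and the references cited earlier in this subsection), the $\tau$-generalized Hodge--Tate weights of $\rho_{\pi,\iota_\lambda}|_{\Gal_{K_v}}$ are the ones predicted by the weight of $\pi$ at $\iota_\lambda^{-1}\circ\tau$, and these are pairwise distinct because $\pi$ is regular. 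This contradiction shows that $\{\rho_\lambda\}$ cannot be of type (D).

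The crux is the last input: that $\rho_{\pi,\iota_\lambda}$ already carries the expected (hence pairwise distinct) Hodge--Tate--Sen weights at the places above $\ell$. For non-polarizable $\pi$ this is the one piece of $\ell$-adic Hodge information one has to invoke, but note it only concerns the Sen weights, not de Rham-ness. A slightly longer alternative avoids even this: since in type (D) the projective image of $\rho_\lambda$ is a fixed finite group and $\rho_\lambda$ is irreducible (Theorem~\ref{main}), the reduction $\bar\rho_\lambda$ is irreducible and, via Lemma~\ref{lem3} with the associated number field $M$ independent of $\lambda$, decomposed generic for all but finitely many $\lambda$; then \cite[Theorem~1.0.6]{A'C23} gives property (dR) with the expected distinct Hodge--Tate numbers, again contradicting the constancy of the Hodge--Tate weights of $\rho_\lambda|_{\Gal_L}=\chi^{\oplus 3}$.
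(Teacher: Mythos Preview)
Your primary argument relies on the claim that the $\tau$-generalized Hodge--Tate (Sen) weights of $\rho_{\pi,\iota_\lambda}|_{\Gal_{K_v}}$ are those predicted by the weight of $\pi$. This is precisely what is \emph{not} available for non-polarizable $\pi$: the references you cite (\cite{HLTT16}, \cite{Sc15}, \cite{Va18}) establish local--global compatibility only at places away from $\ell$, and the paper itself stresses that for general $\pi$ ``it is not even known that \eqref{repn} is Hodge-Tate at $v\in S_\ell$'' without extra hypotheses. The constructions of Harris--Lan--Taylor--Thorne and Scholze realize $\rho_{\pi,\iota}$ as an $\ell$-adic limit, and control of Sen's operator under such limits is exactly the missing ingredient. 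So this step is a genuine gap, and the argument as written does not go through; you even flag this yourself as ``the one piece of $\ell$-adic Hodge information one has to invoke'', but it is not one that can presently be invoked.

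Your alternative, however, is essentially the paper's own proof. The paper argues the same way: in type (D) the adjoint representation $\ad(\rho_\lambda)$ has fixed finite image, cutting out a number field $K'/K$ independent of $\lambda$; choosing $\ell$ larger than the degree of its normal closure $M/\Q$ forces $\zeta_\ell\notin M$, hence (DG) by Lemma~\ref{lem3}, and together with (RI) A'Campo's theorem yields (dR) with distinct Hodge--Tate numbers, contradicting $\rho_\lambda|_{\Gal_L}\cong\chi^{\oplus 3}$. The one point you compress is why $\bar\rho_\lambda$ is irreducible: you assert this follows from $\rho_\lambda$ being irreducible with finite projective image, which is indeed correct for $\ell$ prime to the order of that image, but the paper makes it explicit by applying \cite[Proposition~43(ii)]{Se77} to the finite-image representation $\ad(\rho_\lambda)$ and reading off that the trivial representation still occurs with multiplicity one in $\ad(\bar\rho_\lambda)$.
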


\begin{proof}
Suppose $\{\rho_\lambda\}$ is of type (D). 
Consider the SCS $\{\ad(\rho_\lambda):=\rho_\lambda\otimes\rho_\lambda^\vee\}$. 
By Serre \cite{Se81b}, the finite $\mathrm{Ker}(\ad(\rho_\lambda))$ is independent of $\lambda$
and is equal to $\Gal(K'/K)$.
Let $M$ be the normal closure of $K'$ over $\Q$ and $m:=[M:\Q]$.

Pick $\lambda\in\Sigma_E$ such that 
$\ell:=\ell(\lambda)>1+m$. 
Since the normal closure of $\overline K^{\mathrm{Ker}(\ad(\bar \rho_\lambda))}$ over $\Q$
is also equal to $M$ and $M$ cannot contain a primitive $\ell$th root 
of unity,
 Lemma \ref{lem3} implies that $\bar\rho_\lambda$ is 
decomposed generic (DG). 
Moreover as $\ell>m\geq |\ad(\bar\rho_\lambda)|$, \cite[Proposition 43(ii)]{Se77} 
asserts that 
the irreducible factors of the semisimple representations $\ad(\rho_\lambda)$ and $\ad(\bar\rho_\lambda)$ 
correspond via reduction.
Since the multiplicity of the trivial representation of $\ad(\rho_\lambda)$ 
is one (by the irreducibility of $\rho_\lambda$), this also holds for $\ad(\bar\rho_\lambda)$,
which implies that $\bar\rho_\lambda$ is irreducible (RI).
Therefore, \cite[Theorem 1.0.6]{A'C23} asserts that $\rho_\lambda|_{\Gal_{K_v}}$ (for $v\in S_\ell$)
is de Rham with
distinct $\tau$-Hodge-Tate numbers.
But this contradicts that $\rho_\lambda(\Gal_L)$ belongs to the homothety for some finite extension $L/K$. 
\end{proof}

It remains to study the $\ell$-adic Hodge properties of $\{\rho_\lambda\}$ 
when it is of type ($A_2$).

\begin{thm}\label{mthm2}
Suppose $\{\rho_\lambda\}$ is of type ($A_2$).
Then there exists a Dirichlet density one set $\mathcal L\subset\Sigma_\Q$ of primes $\ell$
such that $\rho_\lambda$ satisfies the above properties (dR) and (Cr) if $\ell\in\mathcal{L}$.
\end{thm}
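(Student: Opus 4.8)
The plan is to verify, for each prime $\ell$ outside a finite set and for the places $\lambda\mid\ell$ of $E$, the two hypotheses \emph{(RI)} (residual irreducibility) and \emph{(DG)} (decomposed genericity) of A'Campo's theorem \cite[Theorem 1.0.6]{A'C23}, and then to apply that theorem over the resulting Dirichlet density one set $\mathcal L$ of primes. I will use throughout that, by Theorem \ref{mthm}, the type $(A_2)$ is $\lambda$-independent, so that $\bG_\lambda^{\der}=\SL_3$ for all $\lambda$, and that, by Theorem \ref{main}, each $\rho_\lambda$ is absolutely irreducible. Consequently $\ad(\rho_\lambda)=\rho_\lambda\otimes\rho_\lambda^\vee=\UOne\oplus\ad^0(\rho_\lambda)$, where $\ad^0(\rho_\lambda)$ is absolutely irreducible with algebraic monodromy group $\bG_\lambda/(\bG_\lambda\cap\mathbb{G}_m)\cong\PGL_3$, which is connected and meets the scalars trivially; hence, by Proposition \ref{lem:WeakDiv2}, the weak abelian part of $\ad(\rho_\lambda)$ is a sum of copies of the trivial character $\UOne$.

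For \emph{(DG)} I would first exhibit a fixed place $v_0\in\Sigma_K\setminus S$ lying above a rational prime $p_0$ that splits completely in $K$, such that the roots $\beta_1,\beta_2,\beta_3\in\overline\Q$ of the compatible-system polynomial $P_{v_0}(T)\in E[T]$ satisfy $\beta_i/\beta_j\notin\{1,p_0\}$ for all $i\neq j$. To find such $v_0$: since the $\ell$-adic cyclotomic character $\chi_\ell$ takes the value $q_v$ at $\Frob_v$ and the eigenvalues of $\ad(\rho_\lambda)(\Frob_v)$ are exactly the ratios $\beta_i(v)/\beta_j(v)$, the set $S_{\chi_\ell\mid\ad(\rho_\lambda)}$ of places where some ratio equals $q_v$ cannot have density one, for otherwise $\chi_\ell$ would weakly divide $\ad(\rho_\lambda)$, contradicting the previous paragraph as $\chi_\ell$ has infinite order. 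Being the locus where $\ad(\rho_\lambda)\oplus\chi_\ell$ lands in a proper closed conjugation-invariant subvariety of its (connected) algebraic monodromy group, this set therefore has density zero by the Chebotarev density theorem; intersecting its complement with the positive-density set of places above completely split rational primes and with the density one set where $\rho_\lambda(\Frob_v)$ is regular semisimple (cf.\ \eqref{eq:steinberg}) yields $v_0$. Then each of the finitely many nonzero algebraic numbers $\beta_i/\beta_j-p_0$ and $\beta_i-\beta_j$ ($i\neq j$) is a $\lambda$-adic unit for all but finitely many $\lambda$; for such $\lambda$ with $\ell(\lambda)\neq p_0$ the reductions $\bar\beta_i$ of the $\beta_i$ are the pairwise distinct eigenvalues of $\bar\rho_\lambda(\Frob_{v_0})$ and satisfy $\bar\beta_i/\bar\beta_j\neq p_0$, so $v_0$ witnesses that $\bar\rho_\lambda$ is decomposed generic.

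For \emph{(RI)} I would appeal to the uniform large-image theory for compatible systems: since $\{\rho_\lambda\}$ is connected, semisimple, and has $\bG_\lambda^{\der}=\SL_3\neq1$ for every $\lambda$, the residual images are as large as the monodromy predicts for a Dirichlet density one set of $\ell$, so that (by \cite{Hu13} together with \cite{Hu23b}) $\bar\rho_\lambda(\Gal_K)$ then contains a conjugate of $\SL_3(\F_{\ell(\lambda)})$ and hence acts absolutely irreducibly. Letting $\mathcal L\subset\Sigma_\Q$ be the set of primes $\ell$ such that, for every $\lambda\mid\ell$, both this (RI) conclusion and the (DG) conclusion above hold, $\mathcal L$ has Dirichlet density one, and \cite[Theorem 1.0.6]{A'C23} applied to $\rho_\lambda$ gives that $\rho_\lambda|_{\Gal_{K_v}}$ is de Rham for every $v\mid\ell$ with the Hodge--Tate numbers recorded in (dR), and crystalline whenever $\pi_v$ is unramified, i.e.\ (dR) and (Cr) hold. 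The hard part will be this residual-irreducibility input: one cannot circularly invoke the strict compatibility with distinct Hodge--Tate weights needed for \cite[Theorem 1.1(iii)]{Hu23b} (cf.\ the Remark following Lemma \ref{lem3}), since that presupposes (dR) and (Cr); instead (RI) must be extracted directly from the size of $\bG_\lambda^{\der}=\SL_3$ via the uniform large-image results for compatible systems, and making this input precise and density-one is the technical heart of the argument.
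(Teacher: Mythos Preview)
Your overall strategy---verify (RI) and (DG) for a density-one set of $\ell$ and then invoke \cite[Theorem~1.0.6]{A'C23}---is exactly the paper's. The two halves of your argument, however, have very different status.

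Your route to (DG) is genuinely different from the paper's and, once patched, more elementary. The paper does not produce a witnessing Frobenius directly; instead it verifies the sufficient condition of Lemma~\ref{lem3}, namely $\Q(\zeta_\ell)\not\subset M_\lambda$, and this already forces the heavy machinery: pass to restriction of scalars to obtain a $\Q$-rational adjoint system $\{\Phi_\ell^{\ad}\}$, apply Larsen \cite[Theorem~3.17]{La95} to get hyperspecial maximal compactness of the simply connected preimage for density-one $\ell$, and then bound the prime-to-$\ell$ cyclic Jordan--H\"older content of $\bar\Gamma_\ell^{\ad}$ to rule out $\zeta_\ell$. Your idea of fixing a single Frobenius in characteristic zero and reducing is slicker and in fact yields (DG) for all but finitely many $\lambda$. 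Two small repairs are needed. First, decomposed generic requires the eigenvalue-ratio condition at \emph{every} place above $p_0$, not just at your $v_0$; since the bad locus in $\Sigma_K$ has density zero and each split prime has at most $[K:\Q]$ places above it, a good $p_0$ with all $v\mid p_0$ good certainly exists, after which you impose the finitely many non-vanishing conditions at all such $v$ simultaneously. Second, the connectedness of the monodromy of $\ad(\rho_\lambda)\oplus\chi_\ell$ that you invoke is not automatic from connectedness of the summands; here it follows from Goursat's lemma since $\PGL_3$ has trivial abelianization.

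For (RI) there is a genuine gap. You correctly observe that \cite[Theorem~1.1(iii)]{Hu23b} is circular here, but the substitute you name---``uniform large-image theory'' via \cite{Hu13} together with \cite{Hu23b}---does not deliver the needed statement: \cite{Hu13} controls only formal (bi)-characters, not residual images, and \cite{Hu23b} is the reference you just excluded. The paper's actual argument runs as follows: enlarge $E$ so that $\rho_\lambda(\Gal_K)\subset\GL_3(E_\lambda)$ for all $\lambda$ (using a regular semisimple Frobenius and \cite[Lemma~A.1.5]{BLGGT14b}), form the $\Q$-rational restriction-of-scalars systems $\{\Phi_\ell\}$ and $\{\Phi_\ell^{\ad}\}$, apply Larsen \cite[Theorem~3.17]{La95} to $\{\Phi_\ell^{\ad}|_{\Gal_L}\}$ (with $L/K$ chosen so that the monodromy is connected) to get hyperspecial maximality for density-one $\ell$, and then invoke the invariant-dimension comparison of \cite[\S4.2]{HL20} to conclude $\dim(\Phi_\ell^{\ad})^{\Gal_L}=\dim(\bar\Phi_\ell^{\ad})^{\Gal_L}$, which forces $\bar\rho_\lambda|_{\Gal_L}$ to be absolutely irreducible. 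None of these ingredients---restriction of scalars, \cite{La95}, \cite{HL20}---appears in your sketch, and they are precisely the ``technical heart'' you leave open.
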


\begin{proof}
By \cite[Theorem 1.0.6]{A'C23} and Lemma \ref{lem3}, it suffices to show that for a Dirichlet density one set of primes $\ell$, the following two assertions hold whenever $\ell(\lambda)=\ell$:
\begin{enumerate}
\item[(RI):] the residual representation $\bar\rho_\lambda$ is irreducible and  
\end{enumerate}
\begin{equation}\label{noroot}
\Q(\zeta_\ell)\nsubseteq M_\lambda
\end{equation}
where $\zeta_\ell$ is a primitive $\ell$th root of unity and $M_\lambda$ 
is the normal closure of $\overline K^{\mathrm{Ker}(\ad(\bar\rho_\lambda))}$ over $\Q$.

Since $\{\rho_\lambda\}$ is of type ($A_2$) and 
the validity of (RI) (resp. \eqref{noroot}) is invariant under
twisting the SCS with some power of the $\ell$-adic cyclotomic system,
we assume that the algebraic monodromy group $\bG_\lambda=\GL_{3,\overline E_\lambda}$ 
for all $\lambda$ (Theorem \ref{Hui1}).
Fix $\lambda'\in\Sigma_E$, \eqref{eq:steinberg} and the Chebotarev density theorem 
imply that $\rho_{\lambda'}(\Frob_v)$ is regular semisimple (i.e., has three distinct eigenvalues) 
in $\bG_{\lambda'}=\GL_{3,\overline E_{\lambda'}}$
for a density one set $\mathcal L_K\subset\Sigma_K$ of $v$. 
We pick two distinct $v_1,v_2\in\mathcal L_K\backslash S$.
By enlarging the coefficient field $E$ in $\mathsection$\ref{s31},
we assume $E$ contains the eigenvalues of $\rho_{\lambda'}(\Frob_{v_1})$ and $\rho_{\lambda'}(\Frob_{v_2})$.
For any $\lambda\in\Sigma_E$, the compatibility condition of $\{\rho_\lambda\}$ and the enlarged field $E$ 
imply that 
\begin{itemize}
\item $\{\mathrm{Tr}(\rho_\lambda(\gamma)):~\gamma\in\Gal_K\}\subset E_\lambda$ and 
\item there exists $\gamma\in\Gal_K$ such that $\rho_\lambda(\gamma)$ has three distinct roots in $E$.
\end{itemize}

\noindent Hence, we may assume $\rho_\lambda(\Gal_K)\subset \GL_3(E_\lambda)$
for all $\lambda$ by \cite[Lemma A.1.5]{BLGGT14b}.

Consider the following semisimple Serre compatible systems (defined over $\Q$ unramified outside $S$) given by  restriction of scalars:
\begin{equation}\label{res}
\{\Phi_\ell:\Gal_K\stackrel{\prod_{\lambda|\ell}\rho_\lambda}{\longrightarrow} 
\prod_{\lambda|\ell}\GL_3(E_\lambda)
=\GL_3(E\otimes\Q_\ell)=\mathrm{Res}_{E/\Q}(\GL_{3,E})(\Q_\ell)\subset\GL_{3[E:\Q]}(\Q_\ell)\}_\ell
\end{equation}

and 

\begin{equation}\label{resad}
\{\Phi_\ell^{\ad}:\Gal_K\stackrel{\prod_{\lambda|\ell}\ad(\rho_\lambda)}{\longrightarrow} 
\prod_{\lambda|\ell}\GL_9(E_\lambda)
=\GL_9(E\otimes\Q_\ell)=\mathrm{Res}_{E/\Q}(\GL_{9,E})(\Q_\ell)\subset\GL_{9[E:\Q]}(\Q_\ell)\}_\ell.
\end{equation}

Let $\bH_\ell$ be the algebraic monodromy group of $\Phi_\ell$.
By Serre \cite{Se81b}, one can pick a finite extension $L$ of $K$ that is Galois over $\Q$ such that 
$\Phi_\ell(\Gal_L)\subset\bH_\ell^\circ$ for all $\ell$.
It is enough to show that for a density one set $\mathcal L$ of rational primes $\ell$, 
the following two assertions hold whenever $\ell(\lambda)=\ell$:
\begin{enumerate}
\item[(RI$_L$):] the residual representation $\bar\rho_\lambda|_{\Gal_L}$ is (absolutely) irreducible and  
\end{enumerate}
\begin{equation}\label{norootL}
\Q(\zeta_\ell)\nsubseteq \Omega_\ell, 
\end{equation}
where $\Omega_\ell$ is the normal closure of 
$\overline L^{\mathrm{Ker}(\Phi_\ell^{\ad}|_{\Gal_L})}$ over $\Q$. Note that $\Omega_\ell/\Q$ is an infinite extension. 

\subsubsection{Proof of (RI$_L$) for density one}\label{RIL}
Let $\bH_\ell^{\ad}$ be the algebraic monodromy group of $\Phi_\ell^{\ad}|_{\Gal_L}$.
Since $\Phi_\ell^{\ad}|_{\Gal_L}$ factors through $\Phi_\ell|_{\Gal_L}$ for all $\ell$ and 
the algebraic monodromy group of $\ad(\rho_\lambda)$ is $\PGL_{3}$ for all $\lambda$,
 $\bH_\ell^{\ad}$ is isomorphic to the adjoint quotient of $\bH_\ell^\circ$, which is connected and adjoint semisimple. 

Let $\alpha_\ell:\bH_\ell^{\sc}\to \bH_\ell^{\ad}$ be the universal covering. By Larsen \cite[Theorem 3.17]{La95},
there is a Dirichlet density one set $\mathcal{L}'$ of primes $\ell$ such that 
the preimage $\Gamma_\ell^{\sc}$ of $\Gamma_\ell^{\ad}:=\Phi_\ell^{\ad}(\Gal_L)$ 
in $\bH_\ell^{\sc}(\Q_\ell)$ is a hyperspecial maximal compact subgroup
for all $\ell\in\mathcal L'$. It follows from \cite[$\mathsection4.2$ Proof of Theorem 1.2]{HL20}
that if $\ell\in\mathcal L'$ is sufficiently large, the following equality on invariant dimensions hold:
\begin{equation}\label{invdim}
\dim_{\Q_\ell} \ad(\Phi_\ell^{\ad})^{\Gal_L}=\dim_{\F_\ell}\ad(\bar\Phi_\ell^{\ad})^{\Gal_L},
\end{equation}
but this implies 
\begin{equation}\label{invdim2}
\dim_{\Q_\ell} (\Phi_\ell^{\ad})^{\Gal_L}=\dim_{\F_\ell}(\bar\Phi_\ell^{\ad})^{\Gal_L}
\end{equation}
since $\Phi_\ell^{\ad}$ is a subrepresentation of $\ad(\Phi_\ell^{\ad})$.
Here $\bar\Phi_\ell^{\ad}$ is the (semisimple) residual representation of $\Phi_\ell^{\ad}$
and $\ad(\bar\Phi_\ell^{\ad})$ (resp. $\ad(\bar\Phi_\ell^{\ad})|_{\Gal_L}$ as $L/K$ Galois) 
is semisimple \cite{Se94}.

As $\{\rho_\lambda\}$ is of type ($A_2$), $\rho_\lambda|_{\Gal_L}$
is absolutely irreducible for all $\lambda$. 
By \eqref{res} and \eqref{invdim2}, $\bar\rho_\lambda|_{\Gal_L}$ is 
absolutely irreducible if $\ell$ belongs to a Dirichlet density one set $\mathcal L''$.

\subsubsection{Proof of \eqref{norootL} for density one}
We need some preparations before the proof.

\begin{defi}
Let $G$ be a finite group and denote by $\mathrm{JH}(G)$ the multiset of Jordan-H\"older factors of $G$.
Define $c_\ell(G)\in\N$ to be the product of the orders
of the cyclic elements of $\mathrm{JH}(G)$ not isomorphic to $\Z/\ell\Z$.
\end{defi}

\begin{lemma}\label{cf}
Let $H_1,H_2,...,H_k$ be finite groups and $G$ a subgroup of $H_1\times H_2\times\cdots \times H_k$.
If $G$ surjects onto $H_i$ for all $1\leq i\leq k$, then 
$\mathrm{JH}(G)\subset \mathrm{JH}( H_1\times H_2\times\cdots \times H_k)$ as inclusion of multisets.
\end{lemma}

\begin{proof}
For $1\leq i\leq k$, let $N_i$ be the kernel of $G$ projecting to $H_1\times H_2\times\cdots\times H_i$.
Then we have a chain of normal subgroups (of $G$)
$$\{e\}=N_k\subset \cdots \subset N_2\subset N_1\subset G$$
such that $N_i/N_{i+1}$ injects into $H_{i+1}$. Since $G$ surjects onto $H_{i+1}$ and $N_i$ is normal in $G$,
$N_i/N_{i+1}$ is normal in $H_{i+1}$. We are done.
\end{proof}

\vspace{.1in}

Suppose $\ell\in\mathcal L'$ (defined in $\mathsection$\ref{RIL}) is sufficiently large such that 
$\Gal(L(\zeta_\ell)/L)\simeq\Z/(\ell-1)\Z$. 
We would like to show that $\zeta\notin \Omega_\ell$. 
If  $\zeta_\ell\in\Omega_\ell$ on the contrary, we obtain a surjective homomorphism
$$\psi_\ell: \Gal(\Omega_\ell/L)\to  \Z/(\ell-1)\Z. $$
 Let $L_1:=\overline L^{\text{Ker} (\Phi_\ell^{\ad}|_{\Gal_L})}$ and denote by
\begin{equation}\label{Li}
L_1,L_2,...,L_k
\end{equation}
be the $\Gal_\Q$-conjugates of $L_1$. Note that
\begin{enumerate}[(i)]
\item $L_1,L_2...,L_k$ generate $\Omega_\ell$;
\item $k\leq [L:\Q]$;
\item $\Gal(L_i/L)\cong\Gamma_\ell^{\ad}:=\Phi_\ell^{\ad}(\Gal_L)$ for all $1\leq i\leq k$. 
\end{enumerate}
By looking at the action of $G_\ell:=\Gal(\Omega_\ell/L)$ on \eqref{Li}, we can identify it
as a subgroup of 
$$\prod_{i=1}^k \Gal(L_i/L)\simeq \prod_{i=1}^k \Gamma_\ell^{\ad}$$ 
that surjects onto the $i$th factor $\Gamma_\ell^{\ad}$
for all $i$. 
Identify $\Gamma_\ell^{\ad}$ as a subgroup of $\GL_{9[E:\Q]}(\Z_\ell)$ using \eqref{resad}
and let $\bar\Gamma_\ell^{\ad}$ be the residual image in $\GL_{9[E:\Q]}(\F_\ell)$.
Let $\bar G_\ell$ be the image of $G_\ell$ in 
\begin{equation}\label{finiteproduct}
\prod_{i=1}^k \bar\Gamma_\ell^{\ad}.
\end{equation} 
Since the kernel of $\GL_{9[E:\Q]}(\Z_\ell)\to \GL_{9[E:\Q]}(\F_\ell)$ is pro-$\ell$,
the surjective $\psi_\ell$ factors through $\bar G_\ell$.
Then Lemma \ref{cf} implies that 
\begin{equation}\label{wrong}
\mathrm{JH}(\Z/(\ell-1)\Z)\subset \mathrm{JH}(\prod_{i=1}^k \bar\Gamma_\ell^{\ad}).
\end{equation}

Consider the central isogeny $\alpha_\ell:\bH_\ell^{\sc}\to \bH_\ell^{\ad}$ (defined in $\mathsection$\ref{RIL}). By \cite[Corollary 2.5]{HL20}, for $\ell\in\mathcal L'$ the image $\alpha_\ell(\Gamma_\ell^{\sc})$
is normal in $\Gamma_\ell^{\ad}$ with index
$$[\Gamma_\ell^{\ad}:\alpha_\ell(\Gamma_\ell^{\sc})]\leq C$$
for some constant $C$ depending only on $9[E:\Q]$. Thus, one obtains 
\begin{equation}\label{Cbound}
[\bar\Gamma_\ell^{\ad}:\overline{\alpha_\ell(\Gamma_\ell^{\sc})}]\leq C
\end{equation}
where $\overline{\alpha_\ell(\Gamma_\ell^{\sc})}$ is the image of $\alpha_\ell(\Gamma_\ell^{\sc})$
in $\bar\Gamma_\ell^{\ad}$. Since $\bar\Gamma_\ell^{\ad}$ is hyperspecial maximal compact,
it is equal to $\cH_{\Z_\ell}(\Z_\ell)$ for some semisimple group scheme $\cH_{\Z_\ell}$ over $\Z_\ell$
whose generic fiber is $\bH_\ell^{\sc}$. The special fiber $\cH_{\F_\ell}$ is connected semisimple over $\F_\ell$
and of rank bounded by $9[E:\Q]$.
Since the kernel of 
$$\Gamma_\ell^{\sc}=\cH_{\Z_\ell}(\Z_\ell)\twoheadrightarrow \cH_{\F_\ell}(\F_\ell)$$
is pro-$\ell$ and $c_\ell(\cH_{\F_\ell}(\F_\ell))$ is bounded by a constant $C'$ 
depending only on $9[E:\Q]$. As $\Gamma_\ell^{\sc}$ surjects onto $\overline{\alpha_\ell(\Gamma_\ell^{\sc})}$, one obtains 
\begin{equation}\label{C'bound}
c_\ell(\overline{\alpha_\ell(\Gamma_\ell^{\sc})})\leq C'.
\end{equation}
Hence, \eqref{Cbound}, \eqref{C'bound}, and (ii) imply that 
\begin{equation}\label{kbound} 
c_\ell(\prod_{i=1}^k \bar\Gamma_\ell^{\ad})\leq (CC')^k \leq (CC')^{[L:\Q]}.
\end{equation}
Therefore, \eqref{wrong} cannot hold (and thus $\zeta_\ell\notin \Omega_\ell$) if $\ell-1>(CC')^{[L:\Q]}$. 
It follows that \eqref{norootL} holds for all $\ell$ belonging to some density one set $\mathcal L'''$.

We are done by taking $\mathcal L$ to be the intersection $\mathcal L''\cap\mathcal L'''$.
\end{proof}

\section*{Acknowledgments}
G.~B\"ockle and C.-Y. Hui would like to thank F. Calegari, T. Gee, C. Khare, M. Larsen, and J. Thorne for  feedback and comments on this work.
 G.~B\"ockle acknowledges support by the Deutsche Forschungsgemeinschaft (DFG, German Research Foundation) through the Collaborative Research Centre TRR 326 Geometry and Arithmetic of Uniformized Structures, project number 444845124. C.-Y. Hui was partially supported by NSFC (no. 12222120) and a Humboldt Research Fellowship.

\vspace{.1in}
\end{document}